\documentclass[12pt]{article}
\input epsf.tex
\usepackage{amsthm, amssymb, amsmath}
\usepackage{lineno}
\newtheorem{Theorem}{Theorem}
\newtheorem{lemma}{Lemma}

\newtheorem{corollary}{Corollary}
\newtheorem{example}{Example}
\newtheorem{remark}{Remark}

\begin{document}
\title{\bf{Destabilization, stabilization, and multiple attractors in saturated mixotrophic environments}}
\author{Torsten Lindstr\"{o}m \\
Department of Mathematics \\
Linn{\ae}us University \\
SE-35195 V\"{a}xj\"{o}, SWEDEN\\
\\
Yuanji Cheng\\
School of Technology\\
Malm\"{o} University\\
SE-20506 Malm\"{o}, SWEDEN\\
\\
Subhendu Chakraborty\\
Centre for Ocean Life\\
Technical University of Denmark\\
Kemitorvet\\
DK-2800 Kgs. Lyngby, Denmark}
\date{}
\maketitle

\begin{abstract}
The ability of mixotrophs to combine phototrophy and phagotrophy is now well recognized and found to have important implications for ecosystem dynamics. In this paper we examine the dynamical consequences of the invasion of mixotrophs in a model that is a limiting case of the chemostat. The model is a hybrid of a competition model describing the competition between populations of autotroph and mixotroph for limiting resources, and a predator-prey type model describing the interaction between populations of autotroph and herbivore. Our results show that mixotrophs are able to invade in both autotrophic environments and environments described by interactions between autotrophs and herbivores. The interaction between autotrophs and herbivores might be in equilibrium or cycle. We find that invading mixotrophs have the ability to both stabilize and destabilize autotroph-herbivore dynamics depending on the competitive ability of mixotrophs. Moreover the invasion of mixotrophs can also result in multiple attractors. Therefore, our results reveal important consequences of mixotrophic invasions in ecosystems depending on environmental conditions.
\end{abstract}

\section{Introduction}

A large number of plankton taxa, known as mixotrophs, are able to simultaneously exploit both phototrophic and phagotrophic pathway of nutrition. These mixotrophs play important roles in exporting organic matters to higher trophic levels (Ward and Follows (2016)\nocite{Ward.PNAS:113}), carbon export to deep water (Mitra et al. (2014)\nocite{Mitra.PoO:129}), nutrient cycling (Stoecker et al. (2017)\nocite{Stoecker.ARMS:9}) and primary production (Chakraborty et al. (2019)\nocite{chak_prep}).  Mixotrophs are found almost everywhere in the illuminated water column, both in freshwater and marine environments (Hartmann et al. (2012\nocite{Hartmann.PNAS:109}); Stoecker et al. (2009\nocite{Stoecker:AME:57})), oligotrophic and eutrophic systems (Burkholder et al., (2008\nocite{Burkholder.HA:8})), and from polar to equatorial regions (Zubkov and Tarran (2008\nocite{Zubkov.Nature:455}); Stoecker et al. (2009\nocite{Stoecker:AME:57}); Sanders and Gast (2012\nocite{Sanders.FMEcol:82})). They are reported in all planktonic functional groups (except diatoms) and differ largely in their mixotrophic types depending on the availability of light, nutrient and/or prey or other particles upon which they feed (Flynn et a. (2013)\nocite{Flynn.JPR:35}; Chakraborty et al. (2017)\nocite{Chakraborty.AmNat:189}; Berge et al. (2017)\nocite{Berge:ISME:11}). How different types of mixotrophs with different competitive abilities affect the ecosystem dynamics is therefore an interesting question in ecology.

In most of the mathematical models of plankton community, plankton are mainly divided into phototrophs and heterotrophs. Among limited studies that include mixotrophs, only few of them investigated the impact of mixotrophs on the system dynamics. Jost et al. (2004)\nocite{Jost.TPB:66} considered a simple food web model with four variables (nutrient, autotrophs, herbivores and mixotrophs) where mixotrophs can consume nutrients as well as graze on autotrophs, and investigated the importance of inclusion of the mixotrophic link in microbial food webs. By incorporating different types of mixotrophs in a phytoplankton-zooplankton system, Hammer and Pitchford (2005)\nocite{Hammer.ICESMarine:62} examined the consequences of incorporation of mixotrophy on the system's equilibrium structure, stability, short-term dynamics and productivity. Similarly, various physiological types of mixotrophs are incorporated separately in a nutrient-phytoplankton-zooplankton-detritus model by Stickney et al. (2000)\nocite{Stickney.EcMod:125} to observe the effects of different types of mixotrophy on the trophic dynamics of ecosystems. Incorporating mixotrophs in a nutrient-phytoplankton-zooplankton-bacteria system with cell quotas and two essential nutrients, carbon and phosphorous, Crane and Grover (2010)\nocite{Crane.JTB:262} discussed the role of the degree of mixotrophy on the system dynamics under different environmental conditions.

The present study goes further than its predecessors in several aspects. Similar to Jost et al. (2004)\nocite{Jost.TPB:66}, here we consider a chemostat model consisting of nutrient-autotroph-herbivore-mixotroph with more realistic resource and prey uptake term for mixotroph representing the shift in uptake preference depending on the resource and prey availability. Moreover, instead of doing logistic approximations that have already received much criticism (Kooi et al. (1998)\nocite{Kooi.BoMB:60}), we analyze a limiting case of the chemostat that will still provide complete information of the system ({Lindstr\"{o}m and Cheng (2015)})\nocite{lindstr_cheng} similar to the logistic approximations (Kuang and Freedman (1988))\nocite{kuangfyra}. We choose a spectrum of mixotroph types by varying the competitive ability of mixotrophs compared to pure phototrophs and pure herbivores. Similar to the previous studies, here also we assume that mixotrophs are less competitive than pure phototrophs for inorganic resources (Litchman et al. (2007)\nocite{Litchman.EcolLett:10}) and less competitive than pure herbivores for prey (Zubkov and Tarran (2008)\nocite{Zubkov.Nature:455}). This assumption resembles real observations that mixotrophs obtain benefit in competition over specialists mainly under limitation by multiple resources that favors generalists (Rothhaupt, (1996)\nocite{Rothhaupt.Ecology:77}; Katechakis and Stibor, (2006)\nocite{Katechakis.Oecologia:148}). The main aim of the present study is to investigate the dynamical consequences of mixotrophic invasion, especially how the invasion of mixotrophs affects the stability of the system. We perform a rigorous analytical study of the system and support our conclusions with numerical simulations.

Our paper is organized as follows. The model is introduced in Section \ref{modelsection} together with its basic properties and parameter restrictions. In Section \ref{saturated_case} we justify our selection of bifurcation parameters. In Section \ref{selection_of_properties} we derive some properties of the bifurcation diagram that will help validating the numerical part of this paper. In Section \ref{number_eq} the number of equilibria in the model is analyzed and in Section \ref{stab_bound_eq}-\ref{stab_int_eq} the stability of the equilibria is analyzed. The analytical study is corroborated and accomplished with a numerical study in Section \ref{numres}. We end up with a discussion of the consequences of our work in Section \ref{dicussion}.

\section{The model}
\label{modelsection}

In this paper we study an important special case of the following chemostat based model for mixotrophs
\begin{eqnarray}
\frac{dS}{dT}&=&CD-DS-\frac{A_1SX}{1+A_1B_1S}-\frac{A_2SZ}{1+A_2B_2S+A_4B_4X},\nonumber\\
\frac{dX}{dT}&=&\frac{M_1A_1SX}{1+A_1B_1S}-DX-\frac{A_3XY}{1+A_3B_3X}-\frac{A_4XZ}{1+A_2B_2S+A_4B_4X},\nonumber\\
\frac{dY}{dT}&=&\frac{M_3A_3XY}{1+A_3B_3X}-DY,\label{chemostat_mixo_orig_alt2}\\
\frac{dZ}{dT}&=&\frac{(M_2A_2S+M_4A_4X)Z}{1+A_2B_2S+A_4B_4X}-DZ.\nonumber
\end{eqnarray}
Here, $S$ stands for nutrient concentration, $X$ for concentration of autotrophs, $Y$ for concentration of herbivores, and $Z$ for concentration of mixotrophs.
The parameter $C$ represents the nutrient input concentration, $D$ is the dilution rate, $A_i$'s are the search rates, $B_i$'s are the handling times, and $M_i$'s are the conversion factors. A division of the time-budget for the mixotrophs' searching for nutrient and autotrophs is assumed, but no preference between different recourses is considered as in the case of optimal foraging theory models, see e. g. Krebs and Davies (1993)\nocite{Krebs.Introduction}, Lindstr\"{o}m (1994)\nocite{MinPhD}, or Boukal and Krivan (1999)\nocite{Boukal.JoMB:39}. When the Holling (1959)\nocite{Holling.CanEnt:91} arguments are repeated for this situation, the amount of nutrient ${\cal{S}}$ and autotrophic organism ${\cal{X}}$ consumed during the food-gathering period ${\cal{T}}$ is given by
\begin{eqnarray*}
{\cal{S}}&=&A_2S({\cal{T}}-B_2{\cal{S}}-B_4{\cal{X}}),\\
{\cal{X}}&=&A_4X({\cal{T}}-B_2{\cal{S}}-B_4{\cal{X}}).
\end{eqnarray*}
Using Cramer's rule we can directly find a solution of this linear system in the form
\begin{eqnarray*}
\frac{{\cal{S}}}{{\cal{T}}}&=&\frac{A_2S}{1+A_2B_2S+A_4B_4X},\\
\frac{{\cal{X}}}{{\cal{T}}}&=&\frac{A_4X}{1+A_2B_2S+A_4B_4X}.
\end{eqnarray*}
corresponding to the functional responses used in (\ref{chemostat_mixo_orig_alt2}).

In this paper we are particularly interested in the special case $M_1=M_2=M_3=M_4=1$ since it allows for a more complete analysis than other cases. Indeed, consider the functional $H(S,X,Y,Z)=S+X+Y+Z-C$. We get
\begin{displaymath}
\frac{dH}{dT}=CD-DS-DX-DY-DZ=-DH.
\end{displaymath}
That is, $H\rightarrow 0$ along the solution curves of (\ref{chemostat_mixo_orig_alt2}). Therefore, we can hope that removal of one of the variables can provide some asymptotic information regarding the system. By substituting $S=C-X-Y-Z$ in (\ref{chemostat_mixo_orig_alt2}), we get
\begin{eqnarray}
\frac{dX}{dT}&=&\frac{A_1(C-X-Y-Z)X}{1+A_1B_1(C-X-Y-Z)}-DX-\frac{A_3XY}{1+A_3B_3X}\nonumber\\
&&-\frac{A_4XZ}{1+A_2B_2(C-X-Y-Z)+A_4B_4X},\nonumber\\
\frac{dY}{dT}&=&\frac{A_3XY}{1+A_3B_3X}-DY,\label{chemostat_mixo_asymp_altII}\\
\frac{dZ}{dT}&=&\frac{A_2(C-X-Y-Z)Z+A_4XZ}{1+A_2B_2(C-X-Y-Z)+A_4B_4X}-DZ.\nonumber
\end{eqnarray}
The last argument is not always valid, see e. g. Thieme (1992)\nocite{JoMB.Thieme:30}. The asymptotic information provided by the reduced system like (\ref{chemostat_mixo_asymp_altII}) will not always reflect all details of the limit sets of the original system (\ref{chemostat_mixo_orig_alt2}). Hence, there are questions at several instances related to the selection of our special case with no natural death rates and all conversion factors equal to one. Our hope is anyway that the special cases selected for detailed analytical study here contain substantial information of the properties of nearby more realistic models.

In order to be a valid model, (\ref{chemostat_mixo_asymp_altII}) needs to satisfy some conditions. We express these conditions as follows:
\begin{itemize}
\item[(A)] The autotrophs should be better competitors for nutrient than the mixotrophs. A criterion for this can be formulated by considering the dynamical properties of the system (\ref{chemostat_mixo_asymp_altII}) when $Y=0$ and $A_4=0$ or
\begin{eqnarray}
\frac{dX}{dT}&=&\frac{A_1(C-X-Z)X}{1+A_1B_1(C-X-Z)}-DX,\nonumber\\
\frac{dZ}{dT}&=&\frac{A_2(C-X-Z)Z}{1+A_2B_2(C-X-Z)}-DZ.\label{competitive_system_auto_mixo}
\end{eqnarray}
We end up (see Lemma \ref{lemma1} below) with a condition that involves the fixed points of the systems
\begin{displaymath}
\frac{dX}{dT}=\frac{A_1(C-X)X}{1+A_1B_1(C-X)}-DX
\end{displaymath}
and
\begin{displaymath}
\frac{dZ}{dT}=\frac{A_2(C-Z)Z}{1+A_2B_2(C-Z)}-DZ,
\end{displaymath}
respectively.
This inequality then reads
\begin{equation}
\frac{A_1C(1-DB_1)-D}{A_1(1-DB_1)}>\frac{A_2C(1-DB_2)-D}{A_2(1-DB_2)}.
\label{fixed_pt_larger_cond_orig}
\end{equation}
Standard algebraic manipulation reduces this inequality to
\begin{equation}
A_1-A_2>D\left(A_1B_1-A_2B_2\right).
\label{fixed_pt_larger_cond}
\end{equation}
The consequences of this inequality for the competitive dynamics between the autotroph and the mixotroph are given in Lemma \ref{lemma1} below.
\item[(B)] The herbivore is a better grazer on autotrophs than the mixotroph. Criteria for this can be formulated by considering the dynamical properties of the system (\ref{chemostat_mixo_asymp_altII}) when $A_2=0$ or
\begin{eqnarray}
\frac{dX}{dT}&=&\frac{A_1(C-X-Y-Z)X}{1+A_1B_1(C-X-Y-Z)}-DX-\frac{A_3XY}{1+A_3B_3X}\nonumber\\
&&-\frac{A_4XZ}{1+A_4B_4X},\nonumber\\
\frac{dY}{dT}&=&\frac{A_3XY}{1+A_3B_3X}-DY,\label{competitive_system_herbi_mixo}\\
\frac{dZ}{dT}&=&\frac{A_4XZ}{1+A_4B_4X}-DZ.\nonumber
\end{eqnarray}
In order to make sure that the ratio $Y/Z$ always increases, we have
\begin{displaymath}
\frac{A_3X}{1+A_3B_3X}>\frac{A_4X}{1+A_4B_4X}, \:\: 0< X\leq C
\end{displaymath}
which is equivalent to $A_3-A_4>0$ and
\begin{equation}
A_3-A_4>A_3A_4C(B_3-B_4).
\label{global_condition_B}
\end{equation}
The above condition holds if $A_3>A_4$ (the herbivore searches for autotrophs more efficiently than the mixotroph) and $B_3<B_4$ (the mixotroph needs more handling time for the autotroph than the herbivore). This global condition can certainly be improved, see e. g. Kustarov (1986)\nocite{kustaroveng} and Lindstr\"{o}m (1994, 2000)\nocite{konflul,szeged1}. We shall derive a local condition for this, as well. The above condition ensures that the herbivore is a better grazer on autotroph in equilibrium in the presence of autotroph (without looking whether this equilibrium is stable or not). We note that the last two equations for the subsystem of (\ref{competitive_system_herbi_mixo}) take the form
\begin{eqnarray}
\frac{dY}{dT}&=&\frac{A_3XY}{1+A_3B_3X}-DY=\Psi_1(X)Y,\nonumber\\
\frac{dZ}{dT}&=&\frac{A_4XZ}{1+A_4B_4X}-DZ=\Psi_2(X)Z.\label{chemostat_mixo_asymp_row2_3}
\end{eqnarray}
This means that the last row vector of the Jacobian matrix evaluated at the unique interior fixed point at the $X,Y$-plane takes the form $\left(0,0,\Psi_2(X_\star)\right)$ assuming that $X_\star$ is the positive solution of $\Psi_1(X)=0$. In other words, the eigenvalue corresponding to invasion of the mixotroph near this equilibrium is given by $\Psi_2(X_\star)$. Requiring that the mixotroph cannot invade near this equilibrium is now equivalent to
\begin{equation}
A_3-A_4>D(A_3B_3-A_4B_4)
\label{local_condition_B}
\end{equation}
which is our local condition. Obviously, we have $(\ref{global_condition_B})\Rightarrow (\ref{local_condition_B})$, since (\ref{local_condition_B}) means that
\begin{displaymath}
\Psi_2(X_\star)<0=\Psi_1(X_\star)
\end{displaymath}
whereas (\ref{global_condition_B}) means that
\begin{displaymath}
\Psi_2(X)<\Psi_1(X), \: \forall X\in]0,C].
\end{displaymath}
We experience later that (\ref{local_condition_B}) can be formulated as a restriction of one of the bifurcation parameters of the system, cf. Remark \ref{remark_upper_bound}.
\end{itemize}
Conditions derived in (A) and (B) imply that there is a trade-off between the flexibility of being a mixotroph and specializing as an autotroph or as grazer. Such assumptions have substantial support for many organisms, see Stearns (1992)\nocite{stearns}, Litchman et al. (2007)\nocite{Litchman.EcolLett:10}, and Zubkov and Tarran (2008)\nocite{Zubkov.Nature:455}. We describe the dynamical consequences of (\ref{fixed_pt_larger_cond}) in the following lemma.
\begin{lemma}
If \em (\ref{fixed_pt_larger_cond}), \em then the fixed point
\begin{equation}
\left(\frac{A_1C(1-DB_1)-D}{A_1(1-DB_1)},0\right)
\label{fixedpt_comp_syst_stable}
\end{equation}
attracts solutions of all initial conditions in the triangle $X>0$, $Z\geq 0$, $X+Z\leq C$ for the system \em (\ref{competitive_system_auto_mixo}). \em
\label{lemma1}
\end{lemma}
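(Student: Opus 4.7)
I would treat (\ref{competitive_system_auto_mixo}) as a planar flow on the compact triangle $T=\{(X,Z):X\geq 0,\,Z\geq 0,\,X+Z\leq C\}$ and apply the Poincar\'e--Bendixson toolbox. The first step is to check positive invariance of $T$. The coordinate axes $\{X=0\}$ and $\{Z=0\}$ are invariant because the two right-hand sides carry factors $X$ and $Z$, respectively; on the hypotenuse $S:=C-X-Z=0$, the functional responses vanish, leaving $\dot X=-DX\leq 0$ and $\dot Z=-DZ\leq 0$, so the field points weakly inward. In particular, trajectories starting with $X(0)>0$ remain in $\{X>0\}$ for all forward time, and compactness allows Poincar\'e--Bendixson.

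Next I inventory the equilibria. Set $f_i(S):=A_iS/(1+A_iB_iS)$ and $\lambda_i:=D/[A_i(1-DB_i)]$, so that $f_i(\lambda_i)=D$; the algebra sketched just before the lemma shows that (\ref{fixed_pt_larger_cond}) is exactly the inequality $\lambda_1<\lambda_2$. An interior equilibrium would require $f_1(S)=D=f_2(S)$ simultaneously, forcing $\lambda_1=\lambda_2$, and is thus ruled out. The only equilibria in $T$ are therefore the origin, the autotroph fixed point $P_1=(C-\lambda_1,0)$ given by (\ref{fixedpt_comp_syst_stable}), and possibly the mixotroph fixed point $P_2=(0,C-\lambda_2)$ present only when $\lambda_2\leq C$. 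The Jacobian at $P_1$ is upper triangular: its $X$-entry is the stable eigenvalue of the one-dimensional monoculture dynamics on $\{Z=0\}$, and the transverse eigenvalue equals $f_2(\lambda_1)-D$, which is strictly negative because $f_2$ is increasing and $\lambda_1<\lambda_2=f_2^{-1}(D)$. Hence $P_1$ is locally asymptotically stable, while the symmetric calculation shows $P_2$ (when present) is a saddle whose stable manifold lies on the $Z$-axis, and the origin has no stable direction reaching into $\mathrm{int}(T)$.

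To exclude closed orbits I would apply Dulac's criterion on $\mathrm{int}(T)$ with $B(X,Z)=1/(XZ)$. Using $\partial S/\partial X=\partial S/\partial Z=-1$, a short computation gives
\[
\frac{\partial(B\dot X)}{\partial X}+\frac{\partial(B\dot Z)}{\partial Z}=-\frac{A_1}{Z(1+A_1B_1S)^2}-\frac{A_2}{X(1+A_2B_2S)^2}<0
\]
throughout the open first quadrant, so no periodic orbit can lie inside $T$. Poincar\'e--Bendixson then forces every $\omega$-limit set of a trajectory in $T$ to consist of equilibria linked by heteroclinics.

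The final step is to apply the Butler--McGehee lemma. For a trajectory with $X(0)>0$, the stable manifolds of $P_2$ and of the origin are contained in the invariant $Z$-axis, which such a trajectory cannot reach; hence neither point can appear as an $\omega$-limit unless it is the entire $\omega$-limit set, which its saddle or source character forbids. The only admissible $\omega$-limit set is therefore $\{P_1\}$, which is the global attraction claimed; the degenerate case $Z(0)=0$ reduces to the standard one-dimensional chemostat argument on the $X$-axis. I expect the main obstacle to be this last bookkeeping step, where one has to handle uniformly the sub-case $\lambda_2>C$ in which $P_2$ disappears from $T$ and the local type of the origin changes from source to saddle without affecting the conclusion.
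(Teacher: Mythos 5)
Your proof is correct, but it takes a genuinely different route from the paper's. After the same invariance step (the paper shows $H=X+Z$ decreases on the hypotenuse; you observe the field points inward on $S=0$), the paper does not use Poincar\'e--Bendixson machinery at all: it notes that the off-diagonal entries of the Jacobian of (\ref{competitive_system_auto_mixo}) are negative, so the planar system is competitive, invokes Smith (1995) to conclude that every solution converges to an equilibrium, uses (\ref{fixed_pt_larger_cond}) to rule out interior equilibria, and then a local eigenvalue check or the phase portrait (Figure \ref{kurva}) identifies (\ref{fixedpt_comp_syst_stable}) as the attracting one. Your Dulac function $B=1/(XZ)$ together with Poincar\'e--Bendixson and Butler--McGehee replaces the appeal to monotone-systems theory; the computation $-A_1/[Z(1+A_1B_1S)^2]-A_2/[X(1+A_2B_2S)^2]<0$ is correct, as is the equilibrium inventory and the sign of the transverse eigenvalue $f_2(\lambda_1)-D<0$ under (\ref{fixed_pt_larger_cond}). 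What your route buys is self-containedness (no citation of monotone dynamics) and an explicit treatment of the case where the mixotroph axis equilibrium leaves the triangle, which the paper glosses over; it also makes explicit why interior trajectories cannot limit onto $(0,0)$ or $\bigl(0,\,C-D/(A_2(1-DB_2))\bigr)$, a point the paper delegates to the phase portrait. What the paper's route buys is brevity and a structural observation (competitiveness) that is reused later, e.g.\ in Theorem \ref{no_comp_cycles_G}. One small imprecision in your last step: a saddle \emph{can} be the entire $\omega$-limit set of trajectories lying on its stable manifold, so the statement you actually need (and in effect use) is that a trajectory with $X(0)>0$ never lies on the $Z$-axis, hence neither boundary saddle can be its full limit set, while Butler--McGehee plus the absence of any connection returning to the origin (any orbit converging to the origin must lie on the $Z$-axis) rules them out of larger limit sets; spelled out this way the bookkeeping closes, so there is no genuine gap.
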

\begin{proof}
We first prove that the triangle mentioned in the conditions remains invariant. We have solutions at $X=0$, $Z=0$ and by uniqueness of solutions they cannot be intersected. Next consider the functional $H(X,Z)=X+Z$. The total time derivative of this functional with respect to (\ref{competitive_system_auto_mixo}) is given by
\begin{displaymath}
\dot{H}=\frac{A_1(C-X-Z)X}{1+A_1B_1(C-X-Z)}-DX+\frac{A_2(C-X-Z)Z}{1+A_2B_2(C-X-Z)}-DZ.
\end{displaymath}
This quantity is negative at the line $X+Z=C$, for $X>0$ and $Z>0$. The Jacobian of this system evaluated at a generic point of the system (\ref{competitive_system_auto_mixo}) is given by
\begin{displaymath}
J(X,Z)=\left(\begin{array}{cc}
J_{11}(X,Z)&\frac{-A_1X}{(1+A_1B_1(C-X-Z))^2}\\
\frac{-A_2Z}{(1+A_2B_2(C-X-Z))^2}& J_{22}(X,Z)
\end{array}\right).
\end{displaymath}
The off-diagonal elements are clearly negative and thus, (\ref{competitive_system_auto_mixo}) is competitive. If a two dimensional system is competitive, then all its solutions converge towards a fixed point, see Smith (1995)\nocite{Smith.monotone}. The system has one equilibrium at (0,0) and it is unstable exactly when the equilibria at (\ref{fixedpt_comp_syst_stable}) and
\begin{displaymath}
\left(0,\frac{A_2C(1-DB_2)-D}{A_2(1-DB_2)}\right)
\end{displaymath}
have one positive co-ordinate, each. By (\ref{fixed_pt_larger_cond}), there are no other equilibria. A sketch of the phase-portrait (see Figure \ref{kurva}) or a check of the eigenvalues shows that (\ref{fixedpt_comp_syst_stable}) is locally stable. Hence, it attracts all initial conditions in the positive quadrant.
\end{proof}
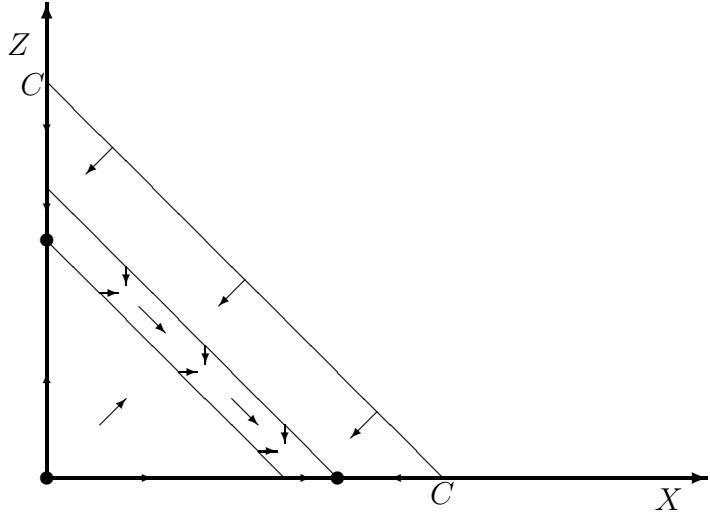
\begin{figure}
\epsfxsize=138mm
\begin{picture}(232,232)(0,0)
\thicklines
\put(60,10){\vector(1,0){250}}
\put(60,10){\vector(0,1){180}}
\thinlines
\put(210,10){\line(-1,1){150}}
\put(170,10){\line(-1,1){110}}
\put(150,10){\line(-1,1){90}}
\put(170,10){\circle*{5}}
\put(60,100){\circle*{5}}
\put(60,10){\circle*{5}}
\put(60,40){\vector(0,1){10}}
\put(60,120){\vector(0,-1){10}}
\put(60,150){\vector(0,-1){10}}
\put(80,30){\vector(1,1){10}}
\put(90,10){\vector(1,0){10}}
\put(150,10){\vector(1,0){10}}
\put(200,10){\vector(-1,0){10}}
\put(140,20){\vector(1,0){7}}
\put(110,50){\vector(1,0){7}}
\put(80,80){\vector(1,0){7}}
\put(150,30){\vector(0,-1){7}}
\put(120,60){\vector(0,-1){7}}
\put(90,90){\vector(0,-1){7}}
\put(130,40){\vector(1,-1){10}}
\put(95,75){\vector(1,-1){10}}
\put(185,35){\vector(-1,-1){10}}
\put(135,85){\vector(-1,-1){10}}
\put(85,135){\vector(-1,-1){10}}
\put(205,0){$C$}
\put(50,155){$C$}
\put(290,-2){$X$}
\put(45,170){$Z$}
\end{picture}
\caption{System (\protect\ref{competitive_system_auto_mixo}) with its three equilibria in the invariant triangle.}
\label{kurva}
\end{figure}

The interpretation of (\ref{fixed_pt_larger_cond}) is thus, that in the absence of grazers, the autotroph is a better competitor for nutrient than the mixotroph. Another interpretation of this result is that we have verified Fisher's (1930)\nocite{fisher} maximum principle for this model.

We introduce dimensionless parameters and variables through the changes
\begin{displaymath}
x=\frac{A_1(1-DB_1)}{A_1C(1-DB_1)-D}X,\:\:
y=\frac{A_1(1-DB_1)}{A_1C(1-DB_1)-D}Y,
\end{displaymath}
\begin{displaymath}
z=\frac{A_1(1-DB_1)}{A_1C(1-DB_1)-D}Z,\:\:
\tau=(A_1C(1-DB_1)-D)T,
\end{displaymath}
\begin{displaymath}
0\leq\kappa_1=\frac{B_1(A_1C(1-DB_1)-D)}{1-DB_1}\leq A_1B_1C=\gamma_1,
\end{displaymath}
\begin{displaymath}
0\leq\kappa_2=\frac{A_2B_2(A_1C(1-DB_1)-D)}{A_1(1-DB_1)}\leq A_2B_2C=\gamma_2,
\end{displaymath}
\begin{displaymath}
0<m=\frac{A_3(1-DB_3)}{A_1(1-DB_1)}\leq\frac{A_3}{A_1(1-DB_1)}=a_1,
\end{displaymath}
\begin{displaymath}
0<a_2=\frac{A_4(1-DB_4)}{A_1(1-DB_1)},
\end{displaymath}
\begin{displaymath}
0\leq b_1=\frac{A_3B_3(A_1C(1-DB_1)-D)}{A_1(1-DB_1)},
\end{displaymath}
\begin{displaymath}
0\leq b_2=\frac{A_4B_4(A_1C(1-DB_1)-D)}{A_1(1-DB_1)},
\end{displaymath}
\begin{displaymath}
0<\frac{A_1D(1-DB_1)}{A_3(1-DB_3)(A_1C(1-DB_1)-D)}=x_\star<1,
\end{displaymath}
\begin{displaymath}
0<k=\frac{A_2(1-DB_2)}{A_1(1-DB_1)}<1,\:{\rm{and}}
\end{displaymath}
\begin{displaymath}
0<c=\frac{A_1(1-DB_1)}{A_2(1-DB_2)}\cdot\frac{A_2C(1-DB_2)-D}{A_1C(1-DB_1)-D}<1.
\end{displaymath}
As a result, system (\ref{chemostat_mixo_asymp_altII}) becomes
\begin{eqnarray}
x^\prime&=&\frac{x(1-x-y-z)}{1+\gamma_1-\kappa_1(x+y+z)}-\frac{a_1xy}{1+b_1x}-\frac{a_2 xz}{1+\gamma_2-\kappa_2(x+y+z)+b_2 x},\nonumber\\
y^\prime&=&\frac{a_1}{1+b_1x_\star}\cdot\frac{x-x_\star}{1+b_1x}y,\label{chemostat_mixo_orig_scaled_altII}\\
z^\prime&=&\frac{k z\left(c-x-y-z\right)+a_2 xz}{1+\gamma_2-\kappa_2(x+y+z)+b_2 x}.\nonumber
\end{eqnarray}
There exists an explicit relationship between $a_1$ and $m$ and it is used in (\ref{chemostat_mixo_orig_scaled_altII}) above for removing the parameter $m$. We also have
\begin{displaymath}
\frac{\gamma_2}{\kappa_2}=\frac{\gamma_1}{\kappa_1}
\end{displaymath}
but prefer to use the expression at the right-hand side in the equations (\ref{chemostat_mixo_orig_scaled_altII}) in order to reduce the number of parameters in the involved functions.

The conditions for a couple of the inequalities above are less clear than the others and are stated in a lemma. Its proof follows standard algebraic procedures.
\begin{lemma}
If \em (\ref{fixed_pt_larger_cond}), \em then we have $0<k<1$, and $0<c<1$.
\label{lemma_kc}
\end{lemma}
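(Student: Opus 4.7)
The plan is to reduce both bounds to direct algebraic consequences of condition (\ref{fixed_pt_larger_cond}), together with the positivity assumptions that have already been implicitly used to make sense of the non-trivial equilibria appearing in the dimensionless rescaling (namely $1-DB_i>0$ and $A_iC(1-DB_i)-D>0$ for $i=1,2$, so that the equilibrium values are positive). These four positivity statements immediately yield $k>0$ and $c>0$, so the work is concentrated in showing $k<1$ and $c<1$.

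For $k<1$, I would rewrite (\ref{fixed_pt_larger_cond}) by grouping terms with common factor $D$, namely
\begin{displaymath}
A_1-A_2>D(A_1B_1-A_2B_2)\iff A_1(1-DB_1)>A_2(1-DB_2).
\end{displaymath}
Dividing through by $A_1(1-DB_1)>0$ gives exactly $k<1$. So $0<k<1$ follows directly.

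For $c<1$, the cleanest route is to return to the equivalent form (\ref{fixed_pt_larger_cond_orig}) of the hypothesis, which compares the two one-species equilibria, and to cross-multiply carefully. I would start from
\begin{displaymath}
\frac{A_1C(1-DB_1)-D}{A_1(1-DB_1)}>\frac{A_2C(1-DB_2)-D}{A_2(1-DB_2)},
\end{displaymath}
multiply both sides by $A_2(1-DB_2)/[A_1C(1-DB_1)-D]>0$, and obtain
\begin{displaymath}
1>\frac{A_1(1-DB_1)}{A_2(1-DB_2)}\cdot\frac{A_2C(1-DB_2)-D}{A_1C(1-DB_1)-D}=c.
\end{displaymath}
Combined with $c>0$ from the positivity of all four factors, this gives $0<c<1$.

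The only subtle point — and the step I expect to be the main, if modest, obstacle — is keeping straight which positivity assumptions are baked into the rescaling and which follow from (\ref{fixed_pt_larger_cond}) itself. The hypothesis (\ref{fixed_pt_larger_cond}) does not by itself force $1-DB_2>0$ or $A_2C(1-DB_2)-D>0$; these are standing biological conditions guaranteeing a non-degenerate positive equilibrium for the mixotroph-only subsystem, and I would state them explicitly at the start of the proof. Once they are in place, both inequalities in the lemma are immediate consequences of the reformulations above.
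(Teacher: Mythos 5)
Your proposal is correct and follows essentially the same route as the paper's (very brief) proof: the rewriting $A_1-A_2>D(A_1B_1-A_2B_2)\iff A_1(1-DB_1)>A_2(1-DB_2)$ gives $k<1$, and $c<1$ is read off directly from (\ref{fixed_pt_larger_cond_orig}), with positivity of $k$ and $c$ coming from the standing assumptions $1-DB_i>0$ and $A_iC(1-DB_i)-D>0$. One trivial slip: multiplying by the factor you name yields $k>kc$ rather than $1>c$ directly, so either use the multiplier $A_1(1-DB_1)/\bigl(A_1C(1-DB_1)-D\bigr)$ instead or divide the resulting inequality by $k>0$.
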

\begin{proof}
We use (\ref{fixed_pt_larger_cond}) to prove the inequality for $k$. The inequality $c<1$ follow more easily and directly from (\ref{fixed_pt_larger_cond_orig}) than from (\ref{fixed_pt_larger_cond}).
\end{proof}
We end this section by proving the following theorem.
\begin{Theorem}
Solutions of \em (\ref{chemostat_mixo_orig_scaled_altII}) \em remain positive and bounded.
\end{Theorem}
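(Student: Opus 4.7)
My plan is to exhibit a compact forward-invariant region in the open positive octant and read off positivity and boundedness simultaneously. The natural candidate is the tetrahedron $T=\{(x,y,z)\in\mathbb{R}^3:x,y,z\geq 0,\ x+y+z\leq L\}$ with $L=\frac{A_1C(1-DB_1)}{A_1C(1-DB_1)-D}>1$; this is the image under the linear scaling $(X,Y,Z)\mapsto(x,y,z)$ of the physical simplex $\{X,Y,Z\geq 0,\ X+Y+Z\leq C\}$ coming from the conservation law $(S+X+Y+Z)'=D(C-S-X-Y-Z)$ in the chemostat (\ref{chemostat_mixo_orig_alt2}). A direct check from the definitions of $\gamma_i$ and $\kappa_i$ gives the identity $\kappa_iL=\gamma_i$ for $i=1,2$, which guarantees $1+\gamma_i-\kappa_i(x+y+z)\geq 1>0$ throughout $T$, so the denominators in (\ref{chemostat_mixo_orig_scaled_altII}) never vanish and the vector field is smooth on $T$.

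Positivity is then immediate: each equation in (\ref{chemostat_mixo_orig_scaled_altII}) has the form $u'=u\,G_u(x,y,z)$ with $G_u$ bounded on $T$, so the coordinate planes $\{x=0\}$, $\{y=0\}$, $\{z=0\}$ are invariant and any trajectory starting with $x(0),y(0),z(0)>0$ remains in the open positive octant by uniqueness. For the upper bound, I would verify that the outer face $x+y+z=L$ is never crossed outward. The cleanest route is to differentiate $V=X+Y+Z$ along the unscaled reduced system (\ref{chemostat_mixo_asymp_altII}): the terms $\pm A_3XY/(1+A_3B_3X)$ cancel between $X'$ and $Y'$, the terms $\pm A_4XZ/(1+A_2B_2(C-V)+A_4B_4X)$ cancel between $X'$ and $Z'$, and on the face $V=C$ the remaining autotroph and mixotroph growth terms each carry a factor $C-V=0$, leaving $V'=-DC<0$. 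Hence the physical simplex is forward invariant for (\ref{chemostat_mixo_asymp_altII}), and transporting through the scaling yields forward invariance of $T$ for (\ref{chemostat_mixo_orig_scaled_altII}); together with positivity this is exactly the claim.

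The main obstacle is not analytic but notational: the telescoping above depends essentially on the simplifying assumption $M_1=M_2=M_3=M_4=1$ selected for this special case, without which $V$ would not be the correct Lyapunov quantity. If one prefers to work entirely in the scaled variables, the same cancellations go through after combining $-a_1xy/(1+b_1x)$ with the right-hand side of the $y$-equation, where the combined contribution telescopes to $-a_1x_\star y/(1+b_1x_\star)\leq 0$; the remaining autotrophic and mixotrophic terms then carry the explicit factors $(1-L)<0$ and $(c-L)<0$, the latter using $c<1$ from Lemma \ref{lemma_kc}. Either way, $(x+y+z)'\leq 0$ on the outer face and compactness of $T$ delivers the uniform bound.
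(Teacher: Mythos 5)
Your core mechanism is the same one the paper uses. Positivity is argued identically (invariant coordinate planes plus uniqueness), and your boundedness computation rests on exactly the cancellations the paper exploits in $H=x+y+z$: the grazing term $-a_1xy/(1+b_1x)$ telescopes against the $y$-equation to $-a_1x_\star y/(1+b_1x_\star)\le 0$, and the mixotroph consumption term cancels against the $z$-equation leaving $kz(c-x-y-z)$ over a positive denominator, which is nonpositive above the relevant level set since $c<1$ (Lemma \ref{lemma_kc}). Your observation that $\kappa_iL=\gamma_i$, so that the denominators in (\ref{chemostat_mixo_orig_scaled_altII}) stay $\ge 1$ on the simplex, is a genuinely useful addition that the paper leaves implicit, and routing the computation through the unscaled system (\ref{chemostat_mixo_asymp_altII}), where the conservation structure makes the telescoping transparent, is a clean way to organize it.

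The one shortfall is coverage of initial conditions. You prove forward invariance of the single simplex $T=\{x+y+z\le L\}$, which bounds only those solutions that start inside $T$; the theorem is stated for solutions of (\ref{chemostat_mixo_orig_scaled_altII}) without that restriction. The paper instead runs the same computation on the whole region $x+y+z\ge 1$ (wherever the right-hand side is defined) and concludes $H'\le 0$ there, so every positive solution is bounded by $\max\bigl(1,H(0)\bigr)$; this stronger form is also what underlies the paper's follow-up remark that all solutions of interest eventually end up in the unit simplex $x+y+z\le 1$, which invariance of the outer face of $T$ alone does not deliver. The repair is already contained in your last paragraph: your telescoped identity $-a_1xy/(1+b_1x)+\frac{a_1}{1+b_1x_\star}\cdot\frac{x-x_\star}{1+b_1x}\,y=-\frac{a_1x_\star y}{1+b_1x_\star}$ holds at every point, not just on the face $x+y+z=L$, and the two remaining terms are nonpositive as soon as $x+y+z\ge 1$ and the denominators are positive; stating the estimate on that whole region rather than on one level set recovers the paper's proof verbatim.
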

\begin{proof}
The solutions remain positive since we have solutions in the planes $x=0$, $y=0$, and $z=0$. Uniqueness of solutions grants positive solutions. Next consider the Lyapunov functions $H(x,y,z)=x+y+z$ for $x+y+z\geq 1$. We have from (\ref{chemostat_mixo_orig_scaled_altII})
\begin{eqnarray*}
H^\prime&=&\frac{x(1-x-y-z)}{1+\gamma_1-\kappa_1(x+y+z)}-\frac{a_1xy}{1+b_1x}-\frac{a_2 xz}{1+\gamma_2-\kappa_2(x+y+z)+b_2x}\\
       &&+\frac{a_1}{1+b_1x_\star}\cdot\frac{x-x_\star}{1+b_1x}y+\frac{k z\left(c -x-y-z\right)+a_2 xz}{1+\gamma_2-\kappa_2(x+y+z)+b_2x}\\
       &\leq&-\frac{a_1xy}{1+b_1x}+\frac{a_1}{1+b_1x_\star}\cdot\frac{x-x_\star}{1+b_1x}y\leq 0.
\end{eqnarray*}
\end{proof}
Thus, all solutions of interest will eventually be located in the simplex $x\geq 0$, $y\geq 0$, $z\geq 0$, and $x+y+z\leq 1$.

\section{The saturated almost logistic case}
\label{saturated_case}

The main topic of this paper is the saturated almost logistic case $B_1=B_2=0$, i.e. $\gamma_1=\kappa_1=\gamma_2=\kappa_2=0$. As a result, (\ref{chemostat_mixo_orig_scaled_altII}) takes the form
\begin{eqnarray}
x^\prime &=&x(1-x-y-z)-\frac{a_1xy}{1+b_1x}-\frac{a_2 xz}{1+b_2x},\nonumber\\
y^\prime &=&\frac{a_1}{1+b_1x_\star}\cdot\frac{x-x_\star}{1+b_1x}y,\label{chemostat_mixo_sat_scaled_altII}\\
z^\prime &=&\frac{k z}{1+b_2x}\left(c-x-y-z+\frac{a_2}{k} x\right).\nonumber
\end{eqnarray}
Note that (\ref{chemostat_mixo_orig_scaled_altII}) differs from logistic approximations in the terms $-xy$ and $-xz$ in the equation for $x^\prime$ and that these terms makes the model a limiting case of the chemostat possessing explicit resource dynamics.
It corresponds to the limiting cases $\kappa_i=0,\gamma_i=0,\: i=1,2$ studied in Lindstr\"{o}m and Cheng (2015)\nocite{lindstr_cheng} where the complete global information regarding the dynamics of the subsystem describing the situation when $z=0$ was ensured. We start by doing the nonlinear change
\begin{equation}
\frac{d\tau}{dt}=\frac{1+b_2x}{k}
\label{time_trans}
\end{equation}
of the independent variable and rewriting the above systems in the following isocline form
\begin{eqnarray}
\dot{x}&=&f_i(x)F_i(x)-yf_1(x)-zf_2(x),\nonumber\\
\dot{y}&=&y\psi(x),\label{chemostat_mixo_sat_isocline_altII}\\
\dot{z}&=&z(G(x)-y-z).\nonumber
\end{eqnarray}
where
\begin{displaymath}
f_i(x)=\frac{x+\frac{a_ix}{1+b_ix}}{\frac{k}{1+b_2x}},\:\:F_i(x)=\frac{(1-x)(1+b_ix)}{1+a_i+b_ix},\:\: i=1,2
\end{displaymath}
and
\begin{displaymath}
\psi(x)=\frac{\frac{a_1}{1+b_1x_\star}\cdot\frac{x-x_\star}{1+b_1x}}{\frac{k}{1+b_2 x}},\:\: G(x)=c-x+\frac{a_2}{k}x.
\end{displaymath}
We begin with a lemma stating the most important properties of the functions involved here.
\begin{lemma}
We have that $f_i^\prime(x)>0$, $x\geq 0$, $i=1,2$, and $\psi^\prime(x_\star)>0$.
\end{lemma}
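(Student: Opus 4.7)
The plan is to reduce each assertion to an elementary computation by factoring the given expressions so that the positivity arguments become transparent.

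For the claim $f_i'(x)>0$, I would first clear the compound fractions and write
\[
f_i(x)\;=\;\frac{1+b_2 x}{k}\cdot\frac{x(1+a_i+b_i x)}{1+b_i x}\;=\;q(x)\,p_i(x),
\]
where $q(x)=(1+b_2 x)/k$ is a positive, affine, non-decreasing function (strictly increasing if $b_2>0$, otherwise constant) and $p_i(x)=x(1+a_i+b_i x)/(1+b_i x)$ is positive on $x\ge 0$. It then suffices to show $p_i'(x)>0$, whereupon the product rule delivers $f_i'(x)>0$ (and in fact $f_i'(x)>0$ even when $b_2=0$, since $p_i'>0$ and $q>0$). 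A direct application of the quotient rule to $p_i$ gives a numerator that simplifies to $1+a_i+2b_i x+b_i^2 x^2$, manifestly positive for $x\ge 0$ because $a_i>0$ and $b_i\ge 0$. This disposes of the first assertion.

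For the claim $\psi'(x_\star)>0$, the key observation is that $\psi$ already carries the factor $x-x_\star$. Writing
\[
\psi(x)\;=\;\underbrace{\frac{a_1}{k(1+b_1 x_\star)}}_{=:\,C\,>\,0}\;(x-x_\star)\,\phi(x),\qquad \phi(x):=\frac{1+b_2 x}{1+b_1 x},
\]
with $\phi$ smooth and positive on $x\ge 0$, the product rule evaluated at $x_\star$ kills the $(x-x_\star)\phi'(x_\star)$ contribution, leaving
\[
\psi'(x_\star)\;=\;C\,\phi(x_\star)\;=\;\frac{a_1(1+b_2 x_\star)}{k(1+b_1 x_\star)^2}>0,
\]
since $a_1,k>0$, $b_1,b_2\ge 0$, and $x_\star>0$ (by the scaling in Section \ref{modelsection}).

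There is no real obstacle here: both statements reduce to verifying that explicit rational functions of $x$ with non-negative parameters have the expected sign. The only mild subtlety is algebraic bookkeeping in the $p_i$ derivative, where one must notice the cancellation that turns an a priori unsigned numerator into the clean sum of squares–like expression $1+a_i+2b_i x+b_i^2 x^2$; and, for $\psi$, recognising that evaluating at the zero $x_\star$ of $\psi$ makes the quotient-rule computation collapse to a single nonzero term.
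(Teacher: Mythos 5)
Your proof is correct, but it takes a genuinely different route from the paper on both halves. For $f_i'$, the paper treats the two indices separately: it simplifies $f_2(x)=x(1+a_2+b_2x)/k$ directly, and for $f_1$ it applies the quotient rule to the full expression $x(1+a_1+b_1x)(1+b_2x)/(k(1+b_1x))$ and expands everything, checking term by term that the resulting numerator is a sum of nonnegative monomials. You instead factor $f_i(x)=q(x)p_i(x)$ with $q(x)=(1+b_2x)/k$ nondecreasing and $p_i(x)=x(1+a_i+b_ix)/(1+b_ix)$ independent of $b_2$, so that one short quotient-rule computation (numerator $1+a_i+2b_ix+b_i^2x^2=(1+b_ix)^2+a_i$) settles both indices at once; this is shorter and more transparent than the paper's expansion, at the small cost of invoking the product rule with signs (note $p_i(0)=0$, so ``positive on $x\ge0$'' should read nonnegative, which is all the argument needs since $q'p_i\ge0$ and $qp_i'>0$). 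For $\psi'(x_\star)$, the paper gives only the soft remark that $(x-x_\star)\psi(x)$ is positive definite and $\psi$ is differentiable; taken literally that yields $\psi'(x_\star)\ge0$ rather than strict positivity (compare $(x-x_\star)^3$), and the strictness really comes from the simple-zero structure that your argument makes explicit: factoring $\psi(x)=C(x-x_\star)\phi(x)$ with $\phi(x)=(1+b_2x)/(1+b_1x)$ and evaluating gives the closed form $\psi'(x_\star)=a_1(1+b_2x_\star)/\bigl(k(1+b_1x_\star)^2\bigr)>0$. So your version is not only valid but, on the $\psi$ part, tighter than the paper's own justification.
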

\begin{proof}
Since $\psi$ is differentiable and $(x-x_\star)\psi(x)$ is positive definite, $\psi^\prime(x_\star)>0$. For $f_2$ we get
\begin{displaymath}
f_2(x)=\frac{x}{k}(1+a_2+b_2x),\: {\rm{and}}\: f_2^\prime(x)=\frac{1}{k}(1+a_2+2b_2x).
\end{displaymath}
It is obvious that $f_2$ increases for $x\geq 0$. For $f_1$ we have
\begin{displaymath}
f_1(x)=\frac{x(1+a_1+b_1x)(1+b_2x)}{k(1+b_1x)}=\frac{x+a_1x+b_1x^2+b_2x^2+a_1b_2x^2+b_1b_2x^3}{k(1+b_1x)}.
\end{displaymath}
We differentiate and get
\begin{eqnarray*}
k(1+b_1x)^2f_1^\prime(x)&=&(1+a_1+2b_1x+2b_2x+2a_1b_2x+3b_1b_2x^2)(1+b_1x)\\
&&-b_1(x+a_1x+b_1x^2+b_2x^2+a_1b_2x^2+b_1b_2x^3) \\
  &=&1+a_1+2b_1x+2b_2x+2a_1b_2x+3b_1b_2x^2\\
  &&+b_1x+a_1b_1x+2b_1^2x^2+2b_1b_2x^2+2a_1b_1b_2x^2+3b_1^2b_2x^3\\
  &&-b_1x-a_1b_1x-b_1^2x^2-b_1b_2x^2-a_1b_1b_2x^2-b_1^2b_2x^3\\
  &=&1+a_1+2b_1x+2b_2x+2a_1b_2x\\
  &&+b_1^2x^2+4b_1b_2x^2+a_1b_1b_2x^2+2b_1^2b_2x^3>0,\\
\end{eqnarray*}
for $x\geq 0$.
\end{proof}
We continue with an analysis of the various subsystems. If $y=0$ and $z=0$ we get the logistic equation $\dot{x}=x(1-x)$. For $x=0$, $z=0$, we get \begin{displaymath}
\dot{y}=-\frac{\frac{a_1}{k}x_\star y}{1+b_1x_\star}
\end{displaymath} and for $x=0$, $y=0$ we get $\dot{z}=z(c-z)$, respectively. We return to our logistic friend also in the $yz$-plane. Now we assume $z=0$ and consider
\begin{eqnarray}
  \dot{x} &=& f_1(x)\left(F_1(x)-y\right),\nonumber\\
  \dot{y} &=& \psi(x)y.\label{sat-pp}
\end{eqnarray}
This system turns out to be a Gause-type predator-prey system and its qualitative dynamics is completely known, see Lindstr\"{o}m and Cheng (2015). We may, if necessary, consider the system without the transformation of the independent variable (\ref{time_trans}). Indeed, the sign of the derivative of $F$ determines a lot of its dynamical properties and the derivative is given by
\begin{displaymath}
F_1^{\prime}(x)=\frac{-1-a_1+a_1b_1-2b_1(a_1+1)x-b_1^2x^2}{(1+a_1+b_1x)^2}.
\end{displaymath}
Its denominator is always positive, but its nominator has zeros at
\begin{displaymath}
x=\frac{-a_1-1\pm\sqrt{a_1^2+a_1+a_1b_1}}{b_1}.
\end{displaymath}
One of these zeros is always negative and there exits a positive zero if $b_1>1+\frac{1}{a_1}$. The following lemmas follow from Lindstr\"{o}m and Cheng (2015)\nocite{lindstr_cheng}.
\begin{lemma}
Consider \em(\ref{sat-pp}). \em If either $b_1\leq 1+\frac{1}{a_1}$ or
\begin{displaymath}
0<\frac{-a_1-1+\sqrt{a_1^2+a_1+a_1b_1}}{b_1}\leq x_\star
\end{displaymath}
then the equilibrium $(\min({x_\star},1),\max(0,F_1(x_\star)))$ is globally asymptotically stable in the cone $x>0,y>0$.
\label{glob_stab}
\end{lemma}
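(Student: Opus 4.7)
The plan is to reduce (\ref{sat-pp}) to a Gause-type predator-prey system in normal form and invoke the global asymptotic stability theorem of Lindstr\"om and Cheng (2015). To this end I would first verify the standing hypotheses of that result: $f_1>0$ on $(0,\infty)$ with $f_1'>0$ (supplied by the preceding lemma), $\psi$ vanishes only at $x_\star$ with $\psi'(x_\star)>0$ (also from that lemma), and the prey isocline $F_1$ satisfies $F_1(0)=1/(1+a_1)>0$, $F_1(1)=0$, and $F_1(x)>0$ on $(0,1)$, all of which follow by inspection of the explicit expression for $F_1$.

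I would then split into the two natural cases. If $x_\star\geq 1$, there is no positive interior equilibrium, and $\psi(x)<0$ for all $x\in(0,1)$; since boundedness has already been established and the strip $0\leq x\leq 1$, $y\geq 0$ is eventually absorbing, $y(t)$ is strictly decreasing and tends to $0$, after which $\dot{x}=x(1-x)$ drives $x$ to $1$. LaSalle's invariance principle then yields convergence to $(1,0)=(\min(x_\star,1),\max(0,F_1(x_\star)))$.

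If $0<x_\star<1$, the positive equilibrium is $(x_\star,F_1(x_\star))$. Either hypothesis, $b_1\leq 1+1/a_1$ or
\[
0<\frac{-a_1-1+\sqrt{a_1^2+a_1+a_1b_1}}{b_1}\leq x_\star,
\]
is precisely the statement that the unique interior critical point of $F_1$ (when it exists in $(0,1)$) lies at or to the left of $x_\star$. Consequently $F_1$ is non-increasing on $[x_\star,1]$, and the equilibrium sits on the descending branch of the prey nullcline. This is the classical sufficient condition for global stability: local asymptotic stability at $(x_\star,F_1(x_\star))$ follows from the trace-determinant test on the Jacobian, the absence of closed orbits in the first quadrant follows from a Dulac-type argument applied to a multiplier of the form $1/(yf_1(x))$, and together with boundedness and Poincar\'e-Bendixson this forces all trajectories in $\{x>0,y>0\}$ to converge to the equilibrium. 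Invoking the corresponding theorem of Lindstr\"om and Cheng (2015) then closes the argument.

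The main obstacle will be the bookkeeping required to match the normalization used in Lindstr\"om and Cheng (2015) against the present $f_1$, $F_1$, and $\psi$: the time change (\ref{time_trans}) has already rescaled the vector field, and the factors $1+b_1x$, $1+b_2x$, and $1+b_1x_\star$ appearing in denominators must be checked to be positive on the region of interest so that they do not disturb the sign structure used by the Dulac function or the monotonicity of the isocline. Once this verification is done the cited result applies directly.
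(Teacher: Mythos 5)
Your proposal is correct and takes essentially the same route as the paper: the paper simply computes $F_1^\prime$, locates its positive zero $\frac{-a_1-1+\sqrt{a_1^2+a_1+a_1b_1}}{b_1}$ (which exists only when $b_1>1+\frac{1}{a_1}$), and cites the global stability theorem of Lindstr\"om and Cheng (2015) for Gause-type systems, exactly as you do. One small caveat: your parenthetical claim that a Dulac multiplier of the form $1/(yf_1(x))$ excludes closed orbits gives divergence $F_1^\prime(x)/y$, which is not sign-definite when the hump of $F_1$ lies in $(0,x_\star]$, so that aside does not by itself cover the second hypothesis --- but this is harmless since your final step, like the paper's, rests on the cited theorem.
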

\begin{lemma}
If
\begin{displaymath}
0<x_\star<\frac{-a_1-1+\sqrt{a_1^2+a_1+a_1b_1}}{b_1}
\end{displaymath}
then \em (\ref{sat-pp}) \em possesses a unique limit cycle that is stable.
\label{unique_limit_cycle}
\end{lemma}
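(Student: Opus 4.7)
The plan is to combine Poincaré-Bendixson for existence with a Kuang-Freedman type uniqueness theorem, following the strategy of Lindström and Cheng (2015). Throughout, I would work in the open positive quadrant of the $xy$-plane, exploiting the compactification provided by the Theorem in Section \ref{modelsection}, which (specializing to $z = 0$) confines all trajectories to the triangle $x + y \leq 1$.

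First I would verify that the interior equilibrium $(x_\star, F_1(x_\star))$ is a repeller. The hypothesis places $x_\star$ strictly to the left of the unique positive critical point of $F_1$, so $F_1'(x_\star) > 0$. Using the preceding lemma, $f_1(x_\star) > 0$ and $\psi'(x_\star) > 0$, and since $x_\star < 1$ we have $F_1(x_\star) > 0$. The Jacobian at the interior equilibrium,
\[
J = \begin{pmatrix} f_1(x_\star) F_1'(x_\star) & -f_1(x_\star) \\ \psi'(x_\star) F_1(x_\star) & 0 \end{pmatrix},
\]
therefore has positive trace $f_1(x_\star) F_1'(x_\star)$ and positive determinant $f_1(x_\star)\psi'(x_\star)F_1(x_\star)$, so both eigenvalues have positive real part. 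Combined with the forward invariance of the triangle $x+y\leq 1$ and the saddle structure at $(1,0)$ (where $\psi(1) > 0$ provides an unstable eigendirection pointing into the open first quadrant), Poincaré-Bendixson yields at least one closed orbit in the annular region separating a small neighborhood of the repeller from the outer triangular boundary.

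To upgrade existence to uniqueness and orbital stability I would apply a Kuang-Freedman (1988) type criterion for Gause-type systems, which demands that a certain rational function built from $f_1$, $F_1$, and $\psi$ be monotone in $x$ along the prey isocline. Since $f_1$, $F_1$, and $\psi$ all have explicit rational forms with low-degree polynomial numerators and denominators, the monotonicity condition reduces to a sign analysis of a polynomial in $x$ whose coefficients are built from $a_1$, $b_1$, and $x_\star$; this is the computation executed in Lindström and Cheng (2015). The main obstacle is exactly this algebraic verification: existence of a limit cycle follows almost immediately from the instability of the interior equilibrium together with boundedness, but ruling out a second limit cycle requires careful global control over the geometry of the isoclines, which is the substantive content imported from the earlier paper.
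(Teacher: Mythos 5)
Your proposal is correct and follows essentially the same route as the paper, which offers no independent argument but simply notes that this lemma follows from Lindstr\"{o}m and Cheng (2015): your Poincar\'{e}--Bendixson existence step (repelling interior equilibrium since $F_1^\prime(x_\star)>0$ under the hypothesis, plus the invariant simplex) together with a Kuang--Freedman/Zhang-type uniqueness criterion is precisely the machinery that reference supplies. Deferring the monotonicity computation to that paper is therefore consistent with what the authors themselves do.
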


We are now ready with the subsystem at the $z=0$ plane and turn over to the subsystem at the $y=0$ plane. We obtain the competition system
\begin{eqnarray}
\dot{x} &=& f_2(x)\left(F_2(x)-z\right),\nonumber\\
\dot{z}&=&z\left(G(x)-z\right).\label{sat_comp_altII}
\end{eqnarray}
Of course, the properties of $F_i,\: i=1,2$ are identical. The solutions $G(x)-F_2(x)=0$ determine the number of interior fixed points in the $xz$-plane. Since cubic terms do not enter in this equation, there exist at most two interior fixed points in the positive quadrant of this plane and these fixed points can be computed explicitly. We have now identified the following equilibria of (\ref{chemostat_mixo_sat_scaled_altII}): $(0,0,0)$ (washout/extinction), $(1,0,0)$ (carrying capacity/survival), $(0,0,c)$ (mixotroph carrying capacity/mixotroph survival), $(x_\star,F_1(x_\star),0)$ (predator-prey) that always exist. In addition, we have possibly two competition equilibria $(x_\pm,0,F_2(x_\pm))$. The values $x_\pm$ are the solutions of the quadratic equation
\begin{equation}
k c\left(a_2-\frac{1-c}{c}\right)+\left(a_2-(k-1)\right)a_2 x-b_2 k(1-c)x+a_2 b_2 x^2=q(x)=0,
\label{multiple_eq_alt_II}
\end{equation}
in the unit interval. Finally we have possibly one coexistence equilibrium that we denote by $(x_\star,y_\star,z_\star)$. The last two coordinates of the equilibria are given by the solutions of the linear system
\begin{eqnarray*}
x_\star(1-x_\star)&=&f_1(x_\star)y+f_2(x_\star)z, \\
G(x_\star)&=&y+z.
\end{eqnarray*}
Here, Cramer's rule provides the solutions
\begin{eqnarray*}
y_\star&=&\frac{f_2(x_\star)(F_2(x_\star)-G(x_\star))}{f_1(x_\star)-f_2(x_\star)}=\frac{k(1-x_\star)-\left(1+\frac{a_2}{1+b_2x_\star}\right)\left(k(c-x_\star)+a_2x_\star\right)}{\frac{k}{x_\star}\left(\frac{a_1x_\star}{1+b_1x_\star}-\frac{a_2x_\star}{1+b_2x_\star}\right)},
\\
z_\star&=&\frac{f_1(x_\star)(G(x_\star)-F_1(x_\star))}{f_1(x_\star)-f_2(x_\star)}=\frac{\left(1+\frac{a_1}{1+b_1x_\star}\right)\left(k(c-x_\star)+a_2x_\star\right)-k(1-x_\star)}{\frac{k}{x_\star}\left(\frac{a_1x_\star}{1+b_1x_\star}-\frac{a_2x_\star}{1+b_2x_\star}\right)}.
\end{eqnarray*}
We conclude first that the denominators in the expressions for $y_\star$ and $z_\star$ are positive.
\begin{lemma}
Each of the following are equivalent:
\begin{enumerate}
  \item [\em(i) \em] The local condition \em (\ref{local_condition_B}) \em
  \item [\em(ii)\em] $f_1(x_\star)>f_2(x_\star)$.
  \item [\em(iii)\em] $a_1-a_2>(a_2b_1-a_1b_2)x_\star$
\end{enumerate}
\label{lemma_loc_cond}
\end{lemma}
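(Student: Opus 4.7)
The plan is to establish the cycle (ii)$\Leftrightarrow$(iii)$\Leftrightarrow$(i), with each link following from a direct algebraic manipulation.

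I would begin with (ii)$\Leftrightarrow$(iii), which is the cleaner of the two. Using the closed forms
\begin{displaymath}
f_1(x)=\frac{x(1+b_2x)(1+a_1+b_1x)}{k(1+b_1x)},\qquad f_2(x)=\frac{x(1+a_2+b_2x)}{k}
\end{displaymath}
recorded in the preceding lemma, I would form $f_1(x_\star)-f_2(x_\star)$ over the common denominator $k(1+b_1x_\star)$. The resulting numerator is the cross product $(1+b_2x_\star)(1+a_1+b_1x_\star)-(1+b_1x_\star)(1+a_2+b_2x_\star)$, in which the constants, the linear $b_1x_\star$ and $b_2x_\star$ contributions, and the quadratic $b_1b_2x_\star^2$ term all cancel. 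What remains is exactly $x_\star\bigl(a_1-a_2+(a_1b_2-a_2b_1)x_\star\bigr)$, and since $x_\star$, $k$, and $1+b_1x_\star$ are all strictly positive, the sign of $f_1(x_\star)-f_2(x_\star)$ coincides with the sign of this remainder. That gives (ii)$\Leftrightarrow$(iii) at once.

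For (iii)$\Leftrightarrow$(i), I would first transpose in (iii) to put it into the suggestive form $a_1(1+b_2x_\star)>a_2(1+b_1x_\star)$, pairing each rate with the other organism's saturation factor. The computational glue is the identity
\begin{displaymath}
1+b_1x_\star=\frac{1}{1-DB_3},
\end{displaymath}
which drops straight out of the explicit formulas for $b_1$ and $x_\star$ given in the dimensionless reduction, together with the analogous expression $b_2x_\star=DA_4B_4/[A_3(1-DB_3)]$. Substituting the definitions of $a_1, a_2, b_1, b_2$ and $x_\star$ and multiplying through by the positive quantity $A_1(1-DB_1)(1-DB_3)$ clears every fraction, leaving a polynomial inequality in the original $A_i$, $B_i$, $D$; a short cancellation reduces it to $A_3-A_4>D(A_3B_3-A_4B_4)$, which is (\ref{local_condition_B}).

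The main (and really only) obstacle is bookkeeping in this last substitution, where several $1-DB_i$ factors and four rate-handling products have to be tracked simultaneously. No case distinctions arise, however, because positivity of $k$, $1+b_1x_\star$, and $A_1(1-DB_1)$ is already built into the paper's parameter restrictions and is used only to justify clearing denominators without flipping the inequality. Thus the lemma reduces to two mechanical steps: the telescoping identity that delivers (ii)$\Leftrightarrow$(iii), and the substitution-plus-cancellation that delivers (iii)$\Leftrightarrow$(i).
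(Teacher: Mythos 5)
Your route is the same as the paper's: the paper also proves (ii)$\Leftrightarrow$(iii) by cancelling positive factors, reducing (ii) to $\frac{a_1}{1+b_1x_\star}>\frac{a_2}{1+b_2x_\star}$ (which is exactly your transposed form $a_1(1+b_2x_\star)>a_2(1+b_1x_\star)$), and then obtains (i) by substituting the original parameters. Your first step is correct as written: the cross product does telescope to $x_\star\bigl(a_1-a_2+(a_1b_2-a_2b_1)x_\star\bigr)$, and positivity of $x_\star$, $k$ and $1+b_1x_\star$ gives (ii)$\Leftrightarrow$(iii); your identities $1+b_1x_\star=1/(1-DB_3)$ and $b_2x_\star=DA_4B_4/\bigl(A_3(1-DB_3)\bigr)$ are also correct.

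The caveat is in the final cancellation, which you assert but do not display. If you substitute the paper's stated definition $a_2=A_4(1-DB_4)/\bigl(A_1(1-DB_1)\bigr)$ into $a_1(1+b_2x_\star)>a_2(1+b_1x_\star)$ and clear denominators by $A_1(1-DB_1)(1-DB_3)>0$, you obtain $A_3(1-DB_3)+DA_4B_4>A_4(1-DB_4)$, i.e. $A_3-A_4>D(A_3B_3-2A_4B_4)$, which is strictly weaker than (\ref{local_condition_B}); so the "short cancellation" does not land on (i) under the literal definitions. The reduction terminates at (\ref{local_condition_B}) only if $a_2$ is substituted as $A_4/\bigl(A_1(1-DB_1)\bigr)$, without the $(1-DB_4)$ factor — which is precisely what the displayed inequality in the paper's own proof silently does (its right-hand numerator is $A_4/\bigl(A_1(1-DB_1)\bigr)$), and which is also the coefficient an exact rescaling of the $x^\prime$-equation produces. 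So your argument coincides with the paper's, wrinkle included; to make it watertight you should either carry the substitution out explicitly and state which reading of $a_2$ you are using, or record that with the listed definition of $a_2$ the equivalence (iii)$\Leftrightarrow$(i) acquires the extra $DA_4B_4$ term.
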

\begin{proof}
We start from $f_1(x_\star)>f_2(x_\star)$, or (ii). After cancelling of some positive factors and terms, this condition is equivalent to
\begin{equation}
\frac{a_1}{1+b_1x_\star}>\frac{a_2}{1+b_2x_\star}
\label{parametric_local_cond_B}
\end{equation}
phrased in our current parameters. Since the denominators are positive, this condition is equivalent to (iii) by standard algebraic procedures. Direct substitution of the original parameters in (\ref{parametric_local_cond_B}) gives now
\begin{displaymath}
\frac{\frac{A_3}{A_1(1-DB_1)}}{1+\frac{B_3D}{(1-DB_3)}}>\frac{\frac{A_4}{A_1(1-DB_1)}}{1+\frac{A_4B_4D}{A_3(1-DB_3)}}.
\end{displaymath}
Standard algebraic procedures reduces this expression to (\ref{local_condition_B}) and these steps can be carried out in the opposite direction, too. Hence (i), (ii), and (iii) are equivalent.
\end{proof}
\begin{remark}\em
We note that (\ref{parametric_local_cond_B}) puts an upper bound for $a_2$ given by
\begin{equation}
a_2<a_1\frac{1+b_2x_\star}{1+b_1x_\star}.
\label{upper_bound_a2}
\end{equation}
Since $a_2$ is going to be a bifurcation parameter in our subsequent numerical study, such bounds are quite useful.
\label{remark_upper_bound}
\em\end{remark}
We conclude that the interior equilibrium exists (phrased in terms of the mixotrophic parameters) if and only if $\breve{a}_2<a_2<\tilde{a}_2$ with
\begin{eqnarray*}
\breve{a}_2&=&\frac{k(1-x_\star)-k(c-x_\star)\left(1+\frac{a_1}{1+b_1x_\star}\right)}{x_\star+\frac{a_1x_\star}{1+b_1x_\star}},\\
\tilde{a}_2&=&\frac{-(1-\frac{k}{1+b_2x_\star})x_\star-\frac{kc}{1+b_2x_\star}+\sqrt{\left(\left(1-\frac{k}{1+b_2x_\star}\right)x_\star+\frac{kc}{1+b_2x_\star}\right)^2+\frac{4x_\star k(1-c)}{1+b_2x_\star}}}{\frac{2x_\star}{1+b_2x_\star}}
\end{eqnarray*}
and that a similar criterion in terms of the involved functions can be stated as
\begin{equation}
F_1(x_\star)<G(x_\star)<F_2(x_\star),
\label{iso_interior_crit}
\end{equation}
see Figure \ref{F2_G_F1}.

\section{A selection of properties of the bifurcation diagram}
\label{selection_of_properties}

The last conclusions in the previous section states that $a_2$ and $x_\star$ are important bifurcation parameters not only in the competition plane and the predator-prey plane, respectively, but also for the entire system (\ref{chemostat_mixo_sat_scaled_altII}). For instance, there are possibilities that an interval with respect to the mixo\-tro\-phic link exists, granting equilibrium coexistence. A number of results that specify whether different regions in the bifurcation diagram exist or not can now be formulated. They will be used later on for selecting parameters for a sufficiently general numerically computed bifurcation diagram and for validating our numerical results.

We begin with formulating a condition for $\breve{a}_2<\tilde{a}_2$ indicating that restrictions must be put on $k$ if $c$ is small in order to ensure an interior equilibrium point.
\begin{lemma} Define
\begin{displaymath}
\hat{k}=\left\{\begin{array}{cc}
                  \infty &,{\rm{if}}\:\: c\geq x_\star+\frac{1-x_\star}{1+\frac{a_1}{1+b_1x_\star}}, \\
                  \frac{x_\star a_1}{\frac{1-x_\star}{1+a_1}+x_\star-c} &,{\rm{if}} \:\: c<x_\star+\frac{1-x_\star}{1+\frac{a_1}{1+b_1x_\star}}.
                \end{array}\right.
\end{displaymath}
We have $\breve{a}_2<\tilde{a}_2\Leftrightarrow 0<k<\min(1,\hat{k})$.
\em
\label{exist-param_sat}
\end{lemma}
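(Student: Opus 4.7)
My plan is to read off $\breve{a}_2$ and $\tilde{a}_2$ from the criterion (\ref{iso_interior_crit}) as the $a_2$ values at which $G(x_\star)=F_1(x_\star)$ and $G(x_\star)=F_2(x_\star)$, respectively, so that $\breve{a}_2<\tilde{a}_2$ records precisely the nonemptiness of the open $a_2$-interval granting coexistence. Because $G(x_\star)=c-x_\star+\tfrac{a_2}{k}x_\star$ is strictly increasing in $a_2$, $F_2(x_\star)=\tfrac{(1-x_\star)(1+b_2 x_\star)}{1+a_2+b_2 x_\star}$ is strictly decreasing in $a_2$, and $F_1(x_\star)$ is independent of $a_2$, simple monotonicity will do most of the work.

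At $a_2=0$ one has $G(x_\star)=c-x_\star<1-x_\star=F_2(x_\star)$ because $c<1$ by Lemma \ref{lemma_kc}. By the monotonicities above, $\tilde{a}_2$ is well defined and strictly positive (it is the positive root of the quadratic displayed just before the lemma), so only the location of $\breve{a}_2$ remains to be controlled. Solving $G(x_\star)=F_1(x_\star)$ for $a_2$ gives
\begin{displaymath}
\breve{a}_2=\frac{k}{x_\star}\bigl(F_1(x_\star)-(c-x_\star)\bigr),
\end{displaymath}
so the sign of $\breve{a}_2$ is determined by whether $c$ is above or below $x_\star+F_1(x_\star)=x_\star+\tfrac{1-x_\star}{1+a_1/(1+b_1 x_\star)}$; this is exactly the case split appearing in the definition of $\hat{k}$.

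In the first case $c\ge x_\star+F_1(x_\star)$ we have $\breve{a}_2\le 0<\tilde{a}_2$ for every $k>0$, so $\breve{a}_2<\tilde{a}_2$ reduces to the constraint $0<k<1$ supplied by Lemma \ref{lemma_kc}, matching the $\hat{k}=\infty$ branch. In the second case $c<x_\star+F_1(x_\star)$ we have $\breve{a}_2>0$, linear and strictly increasing in $k$. Because $G(x_\star)=F_1(x_\star)$ at $a_2=\breve{a}_2$, the monotonicities yield
\begin{displaymath}
\breve{a}_2<\tilde{a}_2\iff F_2(x_\star)\big|_{a_2=\breve{a}_2}>F_1(x_\star).
\end{displaymath}
Cross-multiplying and cancelling the positive factor $1-x_\star$, this is equivalent to $\breve{a}_2(1+b_1 x_\star)<a_1(1+b_2 x_\star)$, i.e.\ to the upper bound on $a_2$ from Remark \ref{remark_upper_bound} evaluated at $a_2=\breve{a}_2$. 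Solving this linear inequality for $k$ produces a threshold $\hat{k}$, and intersecting with $0<k<1$ then gives $0<k<\min(1,\hat{k})$.

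The main obstacle will be the last step: the routine but bulky algebraic simplification that brings $\breve{a}_2(1+b_1 x_\star)<a_1(1+b_2 x_\star)$ into the compact closed form for $\hat{k}$ stated in the lemma. Once that simplification is in hand, the remaining equivalences follow directly from the monotonicities above and from the equivalence (i)$\Leftrightarrow$(iii) of Lemma \ref{lemma_loc_cond}, which is what legitimises using Remark \ref{remark_upper_bound} in the argument.
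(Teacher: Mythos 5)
Your argument is correct and, modulo the final algebra, lands on exactly the same inequality as the paper's proof, but it gets there by a genuinely different route. The paper works directly with the closed-form expressions for $\breve{a}_2$ and $\tilde{a}_2$: it introduces the auxiliary quantity $s$ of (\ref{s_def_sat}), disposes of the radical in $\tilde{a}_2$ by squaring, and after a chain of equivalences arrives at $ks<x_\star a_1/(1+b_1x_\star)$, i.e.\ (\ref{last_s_ineq_sat}); the case split on the sign of $s$ then produces the two branches of $\hat{k}$. You instead read $\breve{a}_2$ and $\tilde{a}_2$ off as the crossing values of the isocline heights in (\ref{iso_interior_crit}), exploit monotonicity of $G(x_\star)$ and $F_2(x_\star)$ in $a_2$, and reduce nonemptiness of $(\breve{a}_2,\tilde{a}_2)$ to the bound (\ref{upper_bound_a2}) evaluated at $a_2=\breve{a}_2$. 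These reductions are literally the same condition: since $\breve{a}_2=k\bigl(F_1(x_\star)+x_\star-c\bigr)/x_\star=ks(1+b_2x_\star)/x_\star$, your inequality $\breve{a}_2(1+b_1x_\star)<a_1(1+b_2x_\star)$ is precisely (\ref{last_s_ineq_sat}). What your route buys is the avoidance of the squaring step and a conceptual reading (the interval is nonempty exactly when the efficiency cap of Remark \ref{remark_upper_bound} has not yet been hit at the lower endpoint); what the paper's route buys is that it is pure algebra on explicit roots, with no need to argue uniqueness of the crossing points. Incidentally, the closing appeal to Lemma \ref{lemma_loc_cond} is unnecessary: the inequality $\breve{a}_2(1+b_1x_\star)<a_1(1+b_2x_\star)$ comes from cross-multiplication alone and does not require the local condition to hold.

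One warning about the step you defer as ``routine but bulky'': solving $\breve{a}_2(1+b_1x_\star)<a_1(1+b_2x_\star)$ for $k$ gives the threshold
\[
k<\frac{x_\star a_1(1+b_2x_\star)}{(1+b_1x_\star)\left(\frac{(1-x_\star)(1+b_1x_\star)}{1+a_1+b_1x_\star}+x_\star-c\right)},
\]
which does not simplify to the printed closed form $\hat{k}=\frac{x_\star a_1}{\frac{1-x_\star}{1+a_1}+x_\star-c}$ unless the $b_1x_\star$ and $b_2x_\star$ terms are discarded. Note, however, that the paper's own proof also stops at (\ref{last_s_ineq_sat}) and never derives the printed expression, so this mismatch concerns the statement as printed rather than your logic; your case split and the equivalence $\breve{a}_2<\tilde{a}_2\Leftrightarrow 0<k<\min(1,\hat{k})$ with $\hat{k}$ taken to be the threshold displayed above are sound.
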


\begin{proof} We commence by introducing the auxiliary variable
\begin{equation}
s=\frac{1-x_\star-\left(1+\frac{a_1}{1+b_1x_\star}\right)(c-x_\star)}{(1+b_2x_\star)\left(1+\frac{a_1}{1+b_1x_\star}\right)}
=\frac{1-x_\star}{(1+b_2x_\star)\left(1+\frac{a_1}{1+b_1x_\star}\right)}-\frac{c-x_\star}{1+b_2x_\star}.
\label{s_def_sat}
\end{equation}
The inequality $\breve{a}_2<\tilde{a}_2$ is then equivalent to
\begin{eqnarray}
&&2ks+\left(1-\frac{k}{1+b_2x_\star}\right)x_\star+\frac{kc}{1+b_2x_\star}\nonumber\\&<&\sqrt{\left(\left(1-\frac{k}{1+b_2x_\star}\right)x_\star+\frac{kc}{1+b_2x_\star}\right)^2
+\frac{4x_\star k(1-c)}{1+b_2x_\star}}.\label{s_ineq_sat}
\end{eqnarray}
The radical expression in (\ref{s_ineq_sat}) is positive by Lemma \ref{lemma_kc}. Inequality (\ref{s_ineq_sat}) is therefore equivalent to (\ref{s_ineq2_sat}) after that the squares have been cancelled and the common positive factor $4k$ has been cancelled. Further reorganization of the inequality and substitution of the expression (\ref{s_def_sat}) gives
\begin{eqnarray}
ks^2+s\left(\left(1-\frac{k}{1+b_2x_\star}\right)x_\star+\frac{kc}{1+b_2x_\star}\right)&<&\frac{x_\star (1-c)}{1+b_2x_\star}\label{s_ineq2_sat}\\
&\Updownarrow&\nonumber\\
ks\left(s+\frac{c-x_\star}{1+b_2x_\star}\right)+sx_\star&<&\frac{x_\star (1-c)}{1+b_2x_\star}\nonumber\\
&\Updownarrow&\nonumber\\
ks\frac{1-x_\star}{(1+b_2x_\star)\left(1+\frac{a_1}{1+b_1x_\star}\right)}+&&\nonumber\\
+\left(\frac{1-x_\star}{(1+b_2x_\star)\left(1+\frac{a_1}{1+b_1x_\star}\right)}-\frac{c-x_\star}{1+b_2x_\star}\right)x_\star&<&\frac{x_\star(1-c)}{1+b_2x_\star}\nonumber\\
&\Updownarrow&\nonumber\\
ks\frac{1-x_\star}{1+\frac{a_1}{1+b_1x_\star}}+
\left(\frac{1-x_\star}{1+\frac{a_1}{1+b_1x_\star}}-c+x_\star\right)x_\star&<&x_\star(1-c)\nonumber\\
&\Updownarrow&\nonumber\\
ks\frac{1-x_\star}{1+\frac{a_1}{1+b_1x_\star}}+\frac{1-x_\star}{1+\frac{a_1}{1+b_1x_\star}}x_\star&<&x_\star(1-x_\star)\nonumber\\
&\Updownarrow&\nonumber\\
\frac{ks}{1+\frac{a_1}{1+b_1x_\star}}+\frac{x_\star}{1+\frac{a_1}{1+b_1x_\star}}&<&x_\star\nonumber\\
&\Updownarrow&\nonumber\\
\frac{ks}{1+\frac{a_1}{1+b_1x_\star}}&<&x_\star-\frac{x_\star}{1+\frac{a_1}{1+b_1x_\star}}=\frac{\frac{x_\star a_1}{1+b_1x_\star}}{1+\frac{a_1}{1+b_1x_\star}}\nonumber\\
&\Updownarrow&\nonumber\\
ks&<&\frac{x_\star a_1}{1+b_1x_\star}\label{last_s_ineq_sat}
\end{eqnarray}
The required restriction follows now from (\ref{last_s_ineq_sat}) if $c$ is sufficiently small. For sufficiently large $c$ no restriction follows since we have $0<k<1$ by Lemma \ref{lemma_kc}.
\end{proof}

\begin{figure}
\epsfxsize=140mm
\begin{picture}(350,350)(0,0)
\put(-7.5,-8){\epsfbox{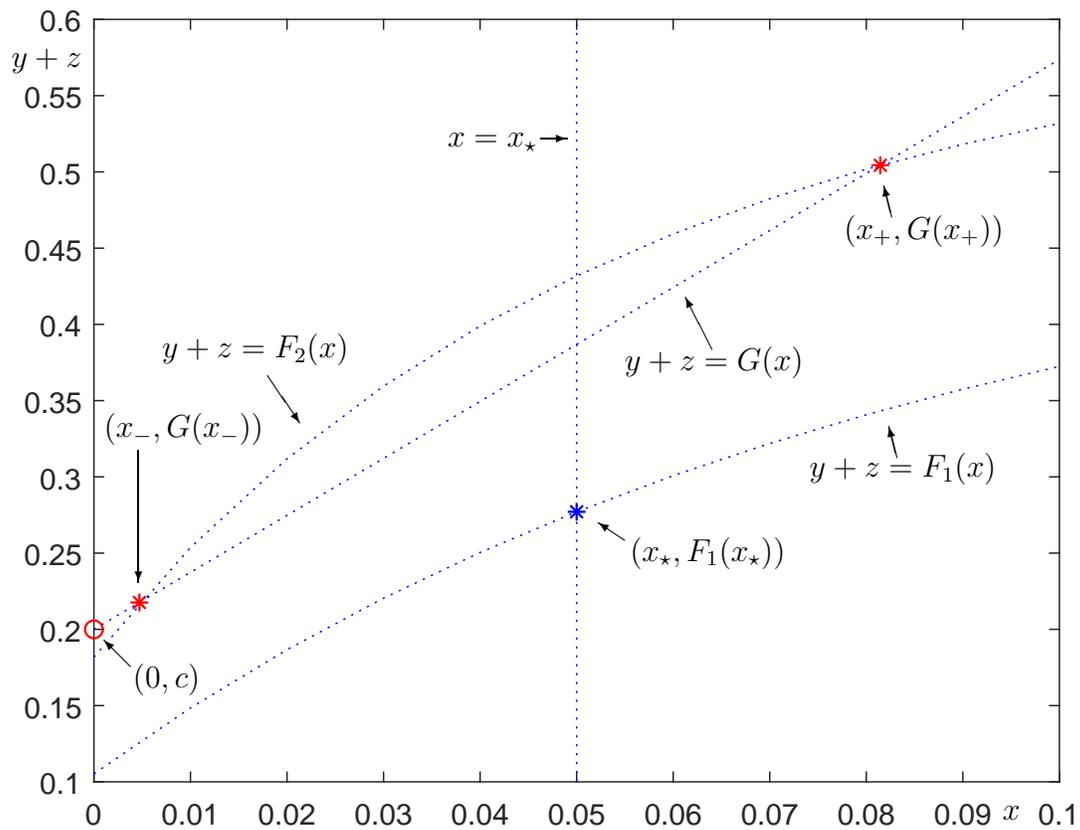}}
\put(34,45){$(0,c)$}
\put(33,53){\vector(-1,1){10}}

\put(23,140){$(x_-,G(x_-))$}
\put(36,135){\vector(0,-1){50}}

\put(222,93){$(x_\star,F_1(x_\star))$}
\put(220,101){\vector(-3,2){10}}
\put(153,250){$x=x_\star$}
\put(188,253){\vector(1,0){10}}
\put(363,-5){$x$}
\put(-12,280){$y+z$}
\put(45,170){$y+z=F_2(x)$}
\put(85,163){\vector(2,-3){12}}
\put(220,165){$y+z=G(x)$}
\put(253,173){\vector(-1,2){10}}
\put(290,125){$y+z=F_1(x)$}
\put(323,132){\vector(-1,4){3}}

\put(303,215){$(x_+,G(x_+))$}
\put(321,223){\vector(-1,4){3}}
\end{picture}
\caption{An interior fixed point exists if and only if $F_1(x_\star)<G(x_\star)<F_2(x_\star)$. Situation sketch for $c=.2$, $k=.95$, $x_\star=.05$,
$a_1=8.5$, $b_1=50$, $a_2=4.5$, $b_2=55$.}
\label{F2_G_F1}
\end{figure}
\begin{lemma}
It holds that $\lim_{x_\star\rightarrow 1}\breve{a}_2=\lim_{x_\star\rightarrow 1}\tilde{a}_2=k(1-c)=\check{a}_2$.
\label{limit_lemma}
\end{lemma}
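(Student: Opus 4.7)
My plan is to compute each one-sided limit directly by substituting $x_\star = 1$ into the closed-form expressions, since every factor appearing in $\breve{a}_2$ and $\tilde{a}_2$ is continuous at $x_\star = 1$. The only delicate point will be simplifying the radical in $\tilde{a}_2$.

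For $\breve{a}_2$, the first summand of the numerator, $k(1-x_\star)$, vanishes at $x_\star = 1$. In the remaining expression the factor $1+\frac{a_1}{1+b_1x_\star}$ appears in the numerator, and the denominator $x_\star + \frac{a_1 x_\star}{1+b_1 x_\star}$ equals $x_\star\bigl(1+\frac{a_1}{1+b_1 x_\star}\bigr)$. This common factor cancels, leaving $-k(c-x_\star)/x_\star$, which at $x_\star = 1$ is exactly $k(1-c)$.

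For $\tilde{a}_2$ the trick is to introduce the abbreviation
\[
u(x_\star) \;=\; \left(1-\frac{k}{1+b_2 x_\star}\right)x_\star + \frac{kc}{1+b_2 x_\star}, \qquad v \;=\; \frac{k(1-c)}{1+b_2},
\]
and observe that $u(1) = 1-v$, while the second term under the radical tends to $4v$. The identity $(1-v)^2 + 4v = (1+v)^2$ then makes the radical collapse to $1+v$, so the numerator of $\tilde{a}_2$ tends to $-(1-v)+(1+v)=2v = 2k(1-c)/(1+b_2)$. The denominator tends to $2/(1+b_2)$, and dividing yields $k(1-c)$.

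The main (only) obstacle is spotting the perfect square in the $\tilde{a}_2$ calculation; without it one would be stuck with an apparently complicated square root. Everything else is continuity at $x_\star = 1$ together with cancellation of the common factor $1+\frac{a_1}{1+b_1 x_\star}$ in $\breve{a}_2$. I would write the argument in two short paragraphs, one per limit, and conclude that both equal $k(1-c)=\check{a}_2$.
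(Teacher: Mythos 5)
Your proof is correct and takes essentially the same route as the paper: the paper's own argument likewise cancels the common factor $1+\frac{a_1}{1+b_1}$ in the $\breve{a}_2$ limit and observes that at $x_\star=1$ the quantity under the radical in $\tilde{a}_2$ collapses to the perfect square $\left(1+\frac{k(1-c)}{1+b_2}\right)^2$, giving $k(1-c)$ in both cases. Your explicit substitution $v=\frac{k(1-c)}{1+b_2}$ is just a cleaner bookkeeping of the same computation.
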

\begin{proof}
The computation of the limit for $\breve{a}_2$ is straightforward after cancelling of the common factor $1+a_1/(1+b_1)$. The computation of the limit for $\tilde{a}_2$ leads to the square
\begin{displaymath}
\left(1+\frac{k}{1+b_2}-\frac{kc}{1+b_2}\right)^2
\end{displaymath}
under the radical and consequently, since $0<c<1$, to a straightforward computation.
\end{proof}

\begin{lemma}
If $x_\star>c$, then it holds that $\tilde{a}_2\leq\frac{1-c}{c}=\hat{a}_2$.
\label{lemma4}
\end{lemma}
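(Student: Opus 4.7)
The plan is to view $\tilde a_2$ as the unique positive root of a quadratic in $a_2$ coming from the boundary equation $G(x_\star)=F_2(x_\star)$ of the existence criterion (\ref{iso_interior_crit}), and then to verify that $\hat a_2=(1-c)/c$ lies to the right of that root by checking the sign of the quadratic at $\hat a_2$.

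First I would clear denominators in $G(x_\star)=F_2(x_\star)$, i.e.\ in
\begin{equation*}
\Bigl(c-x_\star+\tfrac{a_2}{k}x_\star\Bigr)(1+a_2+b_2x_\star)=(1-x_\star)(1+b_2x_\star),
\end{equation*}
and regroup in powers of $a_2$ to obtain
\begin{equation*}
p(a_2):=x_\star a_2^{2}+\bigl((1-k)x_\star+kc+b_2x_\star^{2}\bigr)a_2-k(1-c)(1+b_2x_\star)=0.
\end{equation*}
Since $p$ has positive leading coefficient and $p(0)=-k(1-c)(1+b_2x_\star)<0$ by Lemma \ref{lemma_kc}, $p$ has exactly one positive zero; comparison with the closed form given for $\tilde a_2$ shows that this zero is precisely $\tilde a_2$. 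In particular, because $p$ is strictly increasing past $\tilde a_2$, the inequality $\tilde a_2\leq \hat a_2$ is equivalent to $p(\hat a_2)\geq 0$.

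Next I would substitute $a_2=(1-c)/c$, multiply through by $c^{2}$ and factor out $(1-c)$. A direct reduction should collapse the result to the clean form
\begin{equation*}
c^{2}\,p(\hat a_2)=(1-c)\,x_\star\,\bigl[(1-kc)+cb_2(x_\star-kc)\bigr].
\end{equation*}
Here $1-kc>0$ since $0<k<1$ and $0<c<1$ by Lemma \ref{lemma_kc}, and the hypothesis $x_\star>c$ together with $k<1$ gives $x_\star>kc$, so the bracketed expression is strictly positive. Hence $p(\hat a_2)>0$, which yields $\tilde a_2<\hat a_2$, and in particular the claimed $\tilde a_2\leq \hat a_2$.

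The main obstacle is purely computational: the algebraic reduction of $c^{2}p(\hat a_2)$ must produce exactly the factor $(1-kc)+cb_2(x_\star-kc)$, since this is the form in which the hypothesis $x_\star>c$ enters decisively. Without that hypothesis the sign of $x_\star-kc$ is not controlled and the argument breaks, so I would take particular care to verify that this is the precise point where $x_\star>c$ is used.
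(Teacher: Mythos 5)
Your proof is correct, but it follows a genuinely different route from the paper. You work with the quadratic $p(a_2)=x_\star a_2^{2}+\bigl((1-k)x_\star+kc+b_2x_\star^{2}\bigr)a_2-k(1-c)(1+b_2x_\star)$ obtained from $G(x_\star)=F_2(x_\star)$; its positive leading coefficient and $p(0)<0$ (Lemma \ref{lemma_kc}) give a unique positive root, which indeed coincides with the closed form of $\tilde a_2$ (multiply its numerator and denominator by $1+b_2x_\star$), and your factorization $c^{2}p(\hat a_2)=(1-c)x_\star\bigl[(1-kc)+cb_2(x_\star-kc)\bigr]$ checks out, so $p(\hat a_2)>0$ yields $\tilde a_2<\hat a_2$. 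The paper instead estimates the radical formula directly via the elementary bound $-w+\sqrt{w^{2}+u}\le u/(2w)$, obtaining the chain $\tilde a_2\le k(1-c)\big/\bigl[(1-\tfrac{k}{1+b_2x_\star})x_\star+\tfrac{kc}{1+b_2x_\star}\bigr]\le k(1-c)/c\le(1-c)/c$, where $x_\star>c$ enters because the denominator is a convex combination of $x_\star$ and $c$, and $k\le 1$ closes the argument. The paper's route avoids all quadratic algebra and produces the sharper intermediate bound $\tilde a_2\le k(1-c)/c$ as a by-product; your route avoids manipulating the radical altogether, gives a strict inequality, and in fact uses the hypothesis only through $x_\star>kc$ (the bracket $(1-kc)+cb_2(x_\star-kc)$ being positive), so it establishes the lemma under a slightly weaker assumption than $x_\star>c$.
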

\begin{remark}\em
We always have that $\check{a}_2=k(1-c)<(1-c)/c=\hat{a}_2$.
\em\end{remark}
\begin{proof}
It follows from the estimate
\begin{equation}
-w+\sqrt{w^2+u}\leq\frac{u}{2w},\:\:u\geq -w^2,\: w>0
\label{sqrt-estimate}
\end{equation}
with
\begin{displaymath}
w=\frac{(1-\frac{k}{1+b_2x_\star})x_\star+\frac{kc}{1+b_2x_\star}}{\frac{2x_\star}{1+b_2x_\star}},\:\: u=\frac{k(1-c)(1+b_2x_\star)}{x_\star}
\end{displaymath}
that
\begin{displaymath}
\tilde{a}_2\leq\frac{k(1-c)}{(1-\frac{k}{1+b_2x_\star})x_\star+\frac{kc}{1+b_2x_\star}}\leq\frac{k(1-c)}{c}\leq\frac{1-c}{c}=\hat{a}_2.
\end{displaymath}
We have a chain of three inequalities in the above expression and justify each of them separately. The first inequality holds by (\ref{sqrt-estimate}), the second since the expression in the denominator is a mean between $x_\star$ and $c$ and by the assumption $x_\star>c$. The last inequality holds since $k\leq 1$.
\end{proof}

\begin{lemma}
If $x_\star>(<)(a_1-\hat{a}_2)/(b_1\hat{a}_2)$, then $\check{a}_2<(>)\breve{a}_2$.
\label{lemma5}
\end{lemma}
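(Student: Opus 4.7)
The plan is to prove the equivalence $\check{a}_2<\breve{a}_2 \iff x_\star>(a_1-\hat{a}_2)/(b_1\hat{a}_2)$ by a single chain of reversible algebraic simplifications, so that the "$>$/$<$" version of the lemma drops out automatically. Throughout, I will use that every denominator that appears is positive: $x_\star>0$, $1+b_1x_\star>0$, and $E:=1+a_1/(1+b_1x_\star)>0$, and crucially $0<x_\star<1$ so that $1-x_\star>0$. The factor $k>0$ (from Lemma \ref{lemma_kc}) may be cancelled at the outset.

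First I would substitute the definitions and write
\begin{displaymath}
\breve{a}_2=\frac{k\left[(1-x_\star)-(c-x_\star)E\right]}{x_\star E},\qquad \check{a}_2=k(1-c),
\end{displaymath}
with $E$ as above. Clearing the positive denominator $x_\star E$ and cancelling $k$, the inequality $\check{a}_2<\breve{a}_2$ becomes
\begin{displaymath}
(1-c)x_\star E<(1-x_\star)-(c-x_\star)E,
\end{displaymath}
i.e.\ $E\bigl[(1-c)x_\star+(c-x_\star)\bigr]<1-x_\star$. The bracket simplifies cleanly to $c(1-x_\star)$, and since $1-x_\star>0$ I can divide through to reach the key reduced form $cE<1$.

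Next I would unfold $E$ and carry out standard rearrangement: $cE<1$ is equivalent to $ca_1/(1+b_1x_\star)<1-c$, hence to $a_1/(1+b_1x_\star)<(1-c)/c=\hat{a}_2$, hence to $a_1-\hat{a}_2<\hat{a}_2\,b_1x_\star$, hence (dividing by $b_1\hat{a}_2>0$) to $x_\star>(a_1-\hat{a}_2)/(b_1\hat{a}_2)$. Since every manipulation above is reversible, this establishes the stated equivalence; swapping all inequalities yields the "$<$/$>$" case at no extra cost.

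The only even mildly delicate step is the collapse of the bracket to $c(1-x_\star)$, which is what allows the $E$-dependence to be isolated into the single clean condition $cE<1$; once that is in hand the rest is routine. I anticipate no real obstacle — the whole proof is a short calculation — so the main task is simply to lay out the reversible chain carefully and invoke $0<x_\star<1$ and $0<k<1$ (Lemma \ref{lemma_kc}) at the appropriate places.
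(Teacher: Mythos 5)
Your proof is correct and follows essentially the same route as the paper: after clearing the positive denominators, the paper reduces the claim to the sign of $\left(1-c-\frac{a_1c}{1+b_1x_\star}\right)(1-x_\star)$, which is exactly your condition $cE<1$ multiplied by the positive factor $1-x_\star$. You simply spell out the reversible algebra that the paper leaves implicit, so there is nothing to add.
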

\begin{proof} The inequality $\check{a}_2<(>)\breve{a}_2$ is equivalent to
the inequality
\begin{displaymath}
\left(1-c-\frac{a_1c}{1+b_1x_\star}\right)(1-x_\star)>(<)0
\end{displaymath}
The second factor is always positive. The sign of the first factor depends on $x_\star$. For large $x_\star$ it is positive and the limit for $x_\star$ is the one given in the assumption. If $a_1<\hat{a}_2$, then all values $0<x_\star<1$ gives $\check{a}_2<\breve{a}_2$.
\end{proof}
Combining Lemma \ref{lemma4} and \ref{lemma5}, we get
\begin{corollary}
If $x_\star>\max(c,(a_1-\hat{a}_2)/(b_1\hat{a}_2))$ and $0<k<\min(1,\hat{k})$ then $\emptyset\neq(\breve{a}_2,\tilde{a}_2)\subset(\check{a}_2,\hat{a}_2)$
\end{corollary}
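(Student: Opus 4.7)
The plan is to assemble the conclusion directly from the three preceding results together with the remark, since each of the two quantities appearing in the \(\max\) in the hypothesis is precisely the threshold needed for one of the constituent lemmas. No new calculation should be required beyond checking that the strict/non-strict inequalities line up correctly for open-interval containment.

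First, I would invoke Lemma \ref{exist-param_sat} under the assumption \(0<k<\min(1,\hat{k})\) to conclude that \(\breve{a}_2<\tilde{a}_2\), which by itself shows \((\breve{a}_2,\tilde{a}_2)\neq\emptyset\). Next, the hypothesis \(x_\star>c\) (this being part of the maximum) is exactly the hypothesis of Lemma \ref{lemma4}, yielding \(\tilde{a}_2\leq\hat{a}_2\). Finally, the hypothesis \(x_\star>(a_1-\hat{a}_2)/(b_1\hat{a}_2)\) (the other part of the maximum) is the hypothesis of Lemma \ref{lemma5}, which gives \(\check{a}_2<\breve{a}_2\).

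To finish, I would combine these three inequalities into the chain
\[
\check{a}_2<\breve{a}_2<\tilde{a}_2\leq\hat{a}_2,
\]
and observe that this chain directly implies the strict inclusion of open intervals \((\breve{a}_2,\tilde{a}_2)\subset(\check{a}_2,\hat{a}_2)\). The only point that might appear delicate is that Lemma \ref{lemma4} gives only a weak inequality \(\tilde{a}_2\leq\hat{a}_2\); however, since the upper endpoint of the smaller open interval is not included, weak inequality at this endpoint still guarantees containment in the larger open interval. The Remark preceding Lemma \ref{lemma4}, asserting \(\check{a}_2<\hat{a}_2\) unconditionally, is implied by the chain above and need not be separately invoked. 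I do not anticipate a genuine obstacle here; the corollary is essentially a bookkeeping step that packages the three lemmas into the single sandwich statement that will be used when locating the coexistence interval in the bifurcation diagram of Section \ref{numres}.
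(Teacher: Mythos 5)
Your proposal is correct and matches the paper's intent exactly: the paper presents the corollary as an immediate combination of Lemma \ref{lemma4} and Lemma \ref{lemma5}, with nonemptiness of $(\breve{a}_2,\tilde{a}_2)$ supplied by Lemma \ref{exist-param_sat} under $0<k<\min(1,\hat{k})$, which is precisely your chain $\check{a}_2<\breve{a}_2<\tilde{a}_2\leq\hat{a}_2$. Your remark that the weak inequality $\tilde{a}_2\leq\hat{a}_2$ suffices for the open-interval inclusion is a correct and worthwhile clarification of a point the paper leaves implicit.
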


\section{On the number of competition equilibria}
\label{number_eq}

We return for deriving the conditions for the existence of a certain number of competition equilibria. Define
\begin{equation}
\check{x}=\frac{-a_2^2-a_2(1-k)+b_2k(1-c)}{2a_2b_2}
\label{min_pt_q}
\end{equation}
and consider the quadratic equation
\begin{eqnarray}
   -a_2^2-a_2(1-k)+b_2k(1-c)&=&-(a_2-a_2^+)(a_2-a_2^-)=0,
\end{eqnarray}
along with the quartic equation
\begin{equation}
q_\ast(a_2)=4a_2b_2q(\check{x})=4a_2b_2kc\left(a_2-\frac{1-c}{c}\right)-((a_2-a_2^+)(a_2-a_2^-))^2=0.
\label{4e_gradare}
\end{equation}
We note that
\begin{displaymath}
a_2^\pm=\frac{-(1-k)\pm\sqrt{(1-k)^2+4b_2k(1-c)}}{2},
\end{displaymath}
$a_2^+>0$, $a_2^-<0$ and begin our analysis with the following lemma.
\begin{lemma}
Assume that $a_2^+>(1-c)/c$. In each of the intervals $]-\infty,a_2^-]$, $[a_2^-,0]$, $[(1-c)/c,a_2^+]$, and $[a_2^+,\infty]$, the equation \em (\ref{4e_gradare}) \em has exactly one real root.
\label{interval_root_lemma}
\end{lemma}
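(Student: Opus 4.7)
The plan is to treat $q_\ast$ as a quartic polynomial in $a_2$ and apply the intermediate value theorem after a sign analysis at the five distinguished points $a_2^-$, $0$, $(1-c)/c$, $a_2^+$, together with the asymptotic behaviour as $a_2\to\pm\infty$. Since the factor $((a_2-a_2^+)(a_2-a_2^-))^2$ contributes a leading term $a_2^4$, while the other summand is only quadratic in $a_2$, the polynomial $q_\ast$ has degree $4$ with negative leading coefficient. In particular $q_\ast$ has at most four real roots and $q_\ast(a_2)\to-\infty$ as $|a_2|\to\infty$.

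Next I would evaluate $q_\ast$ at the four distinguished finite points. At $a_2=a_2^\pm$ the squared factor vanishes and only $4a_2 b_2 kc\bigl(a_2-(1-c)/c\bigr)$ remains. For $a_2=a_2^+$ the standing assumption $a_2^+>(1-c)/c>0$ makes both factors positive, so $q_\ast(a_2^+)>0$; for $a_2=a_2^-$ both $a_2^-$ and $a_2^--(1-c)/c$ are negative and $b_2,k,c>0$, so once again $q_\ast(a_2^-)>0$. On the other hand, at the two points where the linear factor $a_2-(1-c)/c$ or the factor $a_2$ vanishes, only the squared term survives with a minus sign, giving
\begin{equation*}
q_\ast(0)=-(a_2^+ a_2^-)^2<0,\qquad q_\ast\bigl((1-c)/c\bigr)=-\bigl(((1-c)/c-a_2^+)((1-c)/c-a_2^-)\bigr)^2<0,
\end{equation*}
where neither square vanishes because $a_2^-<0<(1-c)/c<a_2^+$.

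Combining these evaluations gives the sign pattern $-,+,-,-,+,-$ at the ordered points $-\infty, a_2^-, 0, (1-c)/c, a_2^+, +\infty$. The intermediate value theorem then yields at least one root of $q_\ast$ in each of the four intervals $(-\infty,a_2^-]$, $[a_2^-,0]$, $[(1-c)/c,a_2^+]$, and $[a_2^+,\infty)$. Since the total number of real roots cannot exceed four and none of the listed points is itself a root, each interval must contain exactly one root, as claimed.

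The only step that requires any care is checking that $q_\ast(a_2^\pm)>0$; the inequality for $a_2^+$ uses the hypothesis $a_2^+>(1-c)/c$ directly, and the inequality for $a_2^-$ depends on the sign information $a_2^-<0$, which is already recorded before the lemma. Everything else is bookkeeping: identifying the leading coefficient, noting $b_2,k,c,1-c>0$ from earlier lemmas, and invoking the intermediate value theorem together with the degree bound.
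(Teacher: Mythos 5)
Your proposal is correct and follows essentially the same route as the paper: evaluate the signs of $q_\ast$ at $a_2^-$, $0$, $(1-c)/c$, $a_2^+$ and its limits as $a_2\to\pm\infty$, then apply the intermediate value theorem. Your explicit use of the degree-four bound (negative leading coefficient, at most four real roots) to upgrade ``at least one'' to ``exactly one'' is a detail the paper leaves implicit, but it is the same argument.
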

\begin{proof}
We have
\begin{eqnarray*}
\lim_{a_2\rightarrow -\infty}q_\ast(a_2)&=&-\infty,\\
  q_\ast(a_2^-)&>&0,  \\
  q_\ast(0)&<&0,  \\
  q_\ast\left(\frac{1-c}{c}\right) &<&0,  \\
  q_\ast(a_2^+)&>&0, \:\:{\rm{and}}\\
  \lim_{a_2\rightarrow\infty}q_\ast(a_2)&=&-\infty,
\end{eqnarray*}
implying that the specified location of the roots follows by the intermediate value theorem.
\end{proof}
We denote the root of (\ref{4e_gradare}) in the interval $[(1-c)/c,a_2^+]$ by $a_2^\ast$ and put
\begin{displaymath}
\underline{a}_2=\frac{-(1-k+b_2)+\sqrt{(1-k+b_2)^2+4b_2k(1-c)}}{2}.
\end{displaymath}
We can now state the following lemma. It simplifies the formulation of the next theorem and the range of possible dynamical scenarios for our competition system (\ref{competitive_system_auto_mixo}). Indeed, for increasing $a_2$ an additional competition equilibrium can bifurcate from the mixotroph carrying capacity $(0,0,c)$ through a transcritical bifurcation only.
\begin{lemma}
We have that
\begin{displaymath}
\underline{a}_2<\frac{1-c}{c}=\hat{a}_2.
\end{displaymath}
\label{lemma12}
\end{lemma}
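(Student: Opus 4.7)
The plan is to reuse the square-root estimate (\ref{sqrt-estimate}) that already appeared in the proof of Lemma \ref{lemma4}, since $\underline{a}_2$ has exactly the form $-w+\sqrt{w^2+u}$. Setting
\[
w=\frac{1-k+b_2}{2},\qquad u=b_2 k(1-c),
\]
we have $w>0$ (because $0<k<1$ by Lemma \ref{lemma_kc} and $b_2\geq 0$) and $u\geq 0$, so (\ref{sqrt-estimate}) applies without sign concerns and yields
\[
\underline{a}_2 = -w+\sqrt{w^2+u}\ \leq\ \frac{u}{2w}\ =\ \frac{b_2 k(1-c)}{1-k+b_2}.
\]

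The next step is to show that this upper bound is strictly less than $\hat{a}_2=(1-c)/c$. Cross-multiplying (which is legal because $1-k+b_2>0$ and $c>0$), the inequality $b_2 k(1-c)/(1-k+b_2)<(1-c)/c$ is equivalent, after cancelling the positive factor $1-c$, to $b_2 k c < 1-k+b_2$, i.e. to
\[
b_2(1-kc) > k-1.
\]
Since $0<k<1$ and $0<c<1$ give $kc<1$ and $k-1<0$, the left side is non-negative while the right side is strictly negative, and the inequality holds. The degenerate case $b_2=0$ just collapses $\underline{a}_2$ to $0<(1-c)/c$, so it is covered as well.

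There is essentially no obstacle here; the entire content is recognising that the same elementary estimate used in Lemma \ref{lemma4} converts the square root in $\underline{a}_2$ into a linear fraction, after which the target inequality reduces to the obvious fact that $b_2(1-kc)\geq 0>k-1$. Thus only the formal chain of equivalences and a one-line invocation of (\ref{sqrt-estimate}) need to be written out.
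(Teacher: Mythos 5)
Your proposal is correct and follows essentially the same route as the paper: the same choice $w=(1-k+b_2)/2$, $u=b_2k(1-c)$ in the estimate (\ref{sqrt-estimate}), yielding $\underline{a}_2\leq b_2k(1-c)/(1-k+b_2)$, and the same reduction of the remaining inequality to $-(1-k)<b_2(1-kc)$, which holds since $0<k<1$ and $kc<1$. Your explicit treatment of the degenerate case $b_2=0$ is a small additional care not spelled out in the paper, but otherwise the arguments coincide.
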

\begin{proof}
We shall use (\ref{sqrt-estimate}) for proving the inequality. We have with $w=(1-k+b_2)/2$ and $u=b_2k(1-c)$ that it is enough to show
\begin{displaymath}
\underline{a}_2<\frac{b_2k(1-c)}{1-k+b_2}<\frac{1-c}{c}=\hat{a}_2
\end{displaymath}
The first inequality holds by (\ref{sqrt-estimate}). It remains to prove the second inequality. It is equivalent to
\begin{displaymath}
-(1-k)<b_2(1-kc)
\end{displaymath}
which is identically true since its left-hand side is negative and its right-hand side is positive.
\end{proof}
We now have the following bi-stability theorem.
\begin{Theorem}
If $k(1-c)<a_2<(1-c)/c$ then \em (\ref{sat_comp_altII}) \em has one interior equilibrium. If
\begin{equation}
\frac{1-c}{c}<a_2<a_2^\ast
\label{2_fixed_int_comp}
\end{equation}
then \em (\ref{sat_comp_altII}) \em has two interior equilibria. Otherwise, \em (\ref{sat_comp_altII}) \em has no interior equilibria.
\end{Theorem}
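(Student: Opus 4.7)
The plan is to reduce counting interior equilibria of (\ref{sat_comp_altII}) to counting roots of the convex quadratic $q(x)$ from (\ref{multiple_eq_alt_II}) inside the open interval $(0,1)$. Any interior equilibrium has the form $(x, F_2(x))$ with $F_2(x) = G(x)$; since $F_2 > 0$ precisely on $(0,1)$, the $z$-coordinate positivity is automatic there, and clearing the positive denominator $1+a_2+b_2x$ in $F_2(x) = G(x)$ introduces no extraneous roots.

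The central calculation is evaluating $q$ at the endpoints of $[0,1]$. Directly $q(0) = kc(a_2 - (1-c)/c)$, and a short expansion followed by factoring the resulting quadratic in $a_2$ produces the identity $q(1) = (a_2 - k(1-c))(a_2 + b_2 + 1)$. Since $a_2, b_2 > 0$, the signs of $q(0)$ and $q(1)$ are governed by the thresholds $(1-c)/c$ and $k(1-c)$ respectively; note also $(1-c)/c > k(1-c)$ because $kc < 1$.

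The three regimes then fall out from a case analysis. For $k(1-c) < a_2 < (1-c)/c$ we have $q(0) < 0 < q(1)$, so the intermediate value theorem gives one root of $q$ in $(0,1)$ while Vieta's rule ($x_+ x_- = q(0)/(a_2 b_2) < 0$) forces the other root to be negative: exactly one interior equilibrium. For the ``otherwise'' regime I argue no interior roots in each sub-case. If $0 < a_2 \le k(1-c)$ then $q(0), q(1) \le 0$, so convexity of $q$ forces $q \le 0$ on all of $[0,1]$. If $a_2^\ast < a_2 \le a_2^+$ then the relation $q_\ast(a_2) = 4 a_2 b_2 q(\check{x})$ combined with the sign tabulation in Lemma \ref{interval_root_lemma} gives $q_\ast(a_2) > 0$, hence $q(\check{x}) > 0$ and $q > 0$ everywhere. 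If $a_2 > a_2^+$ then the vertex $\check{x} < 0$, so $q$ is strictly increasing on $[0,\infty)$ starting from the positive value $q(0)$.

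The remaining case $(1-c)/c < a_2 < a_2^\ast$ is the crux: here $q(0), q(1) > 0$, and I must exhibit an interior point where $q < 0$. The same relation $q_\ast(a_2) = 4 a_2 b_2 q(\check{x})$ combined with Lemma \ref{interval_root_lemma} (noting $q_\ast((1-c)/c) < 0$, $q_\ast(a_2^\ast) = 0$, with $a_2^\ast$ the unique root of $q_\ast$ in $[(1-c)/c, a_2^+]$) yields $q(\check{x}) < 0$, and positivity $\check{x} > 0$ follows from $a_2 < a_2^\ast < a_2^+$. I expect the main obstacle to be the upper bound $\check{x} < 1$, which does not follow from Lemma \ref{interval_root_lemma}. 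Rewriting $\check{x}(a_2) = -a_2/(2b_2) - (1-k)/(2b_2) + k(1-c)/(2 a_2)$ and differentiating gives $d\check{x}/d a_2 = -1/(2 b_2) - k(1-c)/(2 a_2^2) < 0$, so $\check{x}$ is strictly decreasing for $a_2 > 0$; substituting $a_2 = (1-c)/c$ yields $\check{x} = (kc(1+b_2 c) - 1)/(2 b_2 c)$, which is below $1$ since $kc(1+b_2 c) < c + b_2 c^2 < 1 + 2 b_2 c$ by $k < 1$ and $c < 1$. With $\check{x} \in (0,1)$, $q(\check{x}) < 0$, and $q(0), q(1) > 0$, the intermediate value theorem places one root in $(0, \check{x})$ and another in $(\check{x}, 1)$, yielding exactly two interior equilibria.
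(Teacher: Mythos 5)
Your proposal is correct and runs on essentially the same machinery as the paper's proof: reduce to roots of the convex quadratic $q$ in the unit interval, read off the one-root regime from the endpoint signs $q(0)$, $q(1)$, and control the two-root regime through the vertex $\check{x}$ and the sign of $q_\ast(a_2)=4a_2b_2q(\check{x})$ via Lemma \ref{interval_root_lemma}. The one substantive deviation is the step $\check{x}<1$: the paper routes this through the auxiliary threshold $\underline{a}_2$ and Lemma \ref{lemma12} (an estimate based on (\ref{sqrt-estimate})), whereas you observe that $\check{x}$ is strictly decreasing in $a_2$ and evaluate it at $a_2=(1-c)/c$, getting $\check{x}=(kc(1+b_2c)-1)/(2b_2c)<1$ directly from $k<1$, $c<1$; this is a clean, self-contained substitute for that lemma. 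You are also more explicit than the paper about the ``no interior equilibria'' regimes (convexity for $a_2\leq k(1-c)$, $q(\check{x})>0$ for $a_2^\ast<a_2\leq a_2^+$, and $\check{x}<0$ with $q(0)>0$ for $a_2>a_2^+$), and your Vieta argument pins down that the second root is negative in the one-equilibrium case. Like the paper, you leave the boundary values $a_2=(1-c)/c$ and $a_2=a_2^\ast$ (where tangent or boundary intersections occur) and the degenerate situation $a_2^+\leq(1-c)/c$ outside the discussion, so the level of rigor matches the original.
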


\begin{proof}
Consider the quadratic equation $q(x)=0$ given by
(\ref{multiple_eq_alt_II}). The condition is $q(0)q(1)<0$ becomes
\begin{displaymath}
kc(a_2+b_2+1)\left(a_2-\frac{1-c}{c}\right)(a_2-k(1-c))<0.
\end{displaymath}
Thus, we have precisely one competition equilibrium when
\begin{displaymath}
\check{a}_2=k(1-c)<a_2<\frac{1-c}{c}=\hat{a}_2.
\end{displaymath}
We notice that the condition for the presence of precisely one competition equilibrium is independent from saturation. The first statement of our bi-stability theorem follows.

For the second statement, the first necessary condition is that the function $q$ must have either a local minimum or a local maximum in the unit interval. Since the leading term of $q$ is positive, only local minima are possible. A minimum requires
\begin{eqnarray*}
  q(0) &=& kc\left(a_2-\frac{1-c}{c}\right)>0 \\
  q(1) &=& (a_2+b_2+1)(a_2-k(1-c))>0
\end{eqnarray*}
or
\begin{displaymath}
  a_2>\frac{1-c}{c}>k(1-c).
\end{displaymath}
The minimum point $\check{x}$ of $q$ was defined by (\ref{min_pt_q})
and we require $0<\check{x}<1$. That is
\begin{eqnarray*}
   -a_2^2-a_2(1-k)+b_2k(1-c)&>&0,  \\
   -a_2^2-a_2(1-k+b_2)+b_2k(1-c)&<&0,
\end{eqnarray*}
which in terms of the mixotrophic parameter $a_2$ can be stated as $\underline{a}_2<a_2<a_2^+$.

Finally, we need $q(\check{x})<0$. It follows from (\ref{4e_gradare}) and Lemma \ref{interval_root_lemma} that the additional requirement $(1-c)/c<a_2<a_2^\ast$ must hold. By Lemma \ref{lemma12}, (\ref{sat_comp_altII}) has two equilibria when (\ref{2_fixed_int_comp}) holds.
\end{proof}
The above theorem does not specify what saturation levels actually are needed in order to grant an interval of type (\ref{2_fixed_int_comp}). The following necessary condition turned out to be helpful.
\begin{corollary}
If an interval of type \em (\ref{2_fixed_int_comp}) \em exist, then
\begin{equation}
cb_2>\frac{1}{k}-1.
\label{ness_crit}
\end{equation}
\end{corollary}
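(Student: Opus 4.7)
The plan is to extract a necessary condition from the preceding theorem and then relax it to the form stated in the corollary using the bound $c<1$. Specifically, I will exploit the fact that existence of the interval forces $a_2^+>(1-c)/c$, and then unwind the definition of $a_2^+$ into an inequality on $cb_2$.

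First, since the interval $((1-c)/c,\,a_2^\ast)$ is nonempty by hypothesis, $a_2^\ast>(1-c)/c$. By Lemma \ref{interval_root_lemma}, $a_2^\ast$ lies in $[(1-c)/c,\,a_2^+]$, so $a_2^+>(1-c)/c$ as well. Substituting the explicit formula for $a_2^+$, this inequality rearranges to
\begin{displaymath}
\sqrt{(1-k)^2+4b_2k(1-c)}>(1-k)+\frac{2(1-c)}{c}.
\end{displaymath}
Both sides are positive (using $0<k<1$ and $0<c<1$ from Lemma \ref{lemma_kc}), so squaring is reversible; the $(1-k)^2$ terms cancel, and dividing by $4(1-c)>0$ leaves $b_2k>(1-k)/c+(1-c)/c^2$. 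Multiplying by $c^2$ and tidying up produces
\begin{displaymath}
ck(b_2c+1)>1,\qquad\text{i.e.,}\qquad cb_2>\frac{1-ck}{ck}.
\end{displaymath}

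Finally, I will use $c<1$ once more to pass from this (slightly sharper) intermediate bound to the stated one. The inequality $(1-ck)/(ck)>(1-k)/k$ is, after clearing the positive denominator $ck$, equivalent to $k-ck^2>ck-ck^2$, i.e.\ to $1>c$, which holds by Lemma \ref{lemma_kc}. Chaining the two inequalities gives $cb_2>(1-ck)/(ck)>1/k-1$, establishing (\ref{ness_crit}).

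The only real obstacle is choosing which necessary condition from the theorem to mobilize. A direct alternative — demanding that the minimizer $\check{x}$ from (\ref{min_pt_q}) be positive at $a_2=(1-c)/c$, so that two positive roots of $q$ can bifurcate from the transcritical point as $a_2$ increases — leads after analogous algebra to exactly the same intermediate inequality $cb_2>(1-ck)/(ck)$, confirming that the corollary states a clean relaxation rather than the sharpest bound. I do not expect any step to be delicate; the proof is a chain of equivalences plus one application of $c<1$.
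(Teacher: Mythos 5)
Your proof is correct, and it starts from the same necessary condition as the paper: existence of an interval of type (\ref{2_fixed_int_comp}) forces $a_2^+>(1-c)/c$. Where you differ is in how you exploit that inequality. The paper invokes the prepared estimate (\ref{sqrt-estimate}) with $w=(1-k)/2$, $u=b_2k(1-c)$ to bound $a_2^+<b_2k(1-c)/(1-k)$ from above, and then comparing this bound with $(1-c)/c$ gives (\ref{ness_crit}) in one line; the same estimate is reused elsewhere (Lemmas \ref{lemma4} and \ref{lemma12}), which is why it is the tool of choice there. You instead square the inequality $a_2^+>(1-c)/c$ exactly (legitimate, since both sides are positive by Lemma \ref{lemma_kc}), which yields the equivalent condition $cb_2>(1-ck)/(ck)$, strictly sharper than (\ref{ness_crit}), and then relax it via $c<1$. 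Your route is thus a chain of equivalences plus one relaxation and, as a by-product, records a marginally sharper necessary criterion; the paper's route is shorter given the estimate already in place. One small stylistic caveat: citing Lemma \ref{interval_root_lemma} to conclude $a_2^\ast\le a_2^+$ is slightly circular in phrasing, since that lemma (and the definition of $a_2^\ast$) already presupposes $a_2^+>(1-c)/c$; it is cleaner to say, as the paper does, that $a_2^+>(1-c)/c$ is needed for the interval in (\ref{2_fixed_int_comp}) to exist at all. This does not affect the validity of your argument.
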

\begin{proof}
In order to have an interval of type (\ref{2_fixed_int_comp}) we need at least $a_2^+>(1-c)/c$. From (\ref{sqrt-estimate}) we have
\begin{displaymath}
a_2^+=\frac{k-1+\sqrt{(k-1)^2+4b_2k(1-c)}}{2}<\frac{b_2k(1-c)}{1-k}
\end{displaymath}
and a necessary criterion for two competition equilibria is therefore (\ref{ness_crit}).
\end{proof}
Multiple competition equilibria are therefore, expected for $k$ close to one and $cb_2$ large. Remember that $0<k<1$, $0<c<1$, and $b_2>0$.
\begin{example}\em
In Figure \ref{faces_case1} we make a special study of the case $c=.2$, $k=.95$, $x_\star=.05$, $a_1=8.5$, $b_1=50$, $a_2=4.5$, and $b_2=55$. This case allows for two competition equilibria (a), and unstable equilibrium in the predator-prey plane (b), and a coexistence equilibrium, cf Figure \ref{F2_G_F1}. Consequently, the function $q$ has two zeros in the unit interval and a minimum point. The function $q_\ast$ has been plotted in (d). It has zeros in the prescribed intervals and in this case $4=(1-c)/c<a_2<a_2^\ast\approx 5.11<a_2^+\approx 6.44.$
\em\end{example}

\begin{figure}
\epsfxsize=166mm
\begin{picture}(350,350)(0,0)
\put(-42.5,-16){\epsfbox{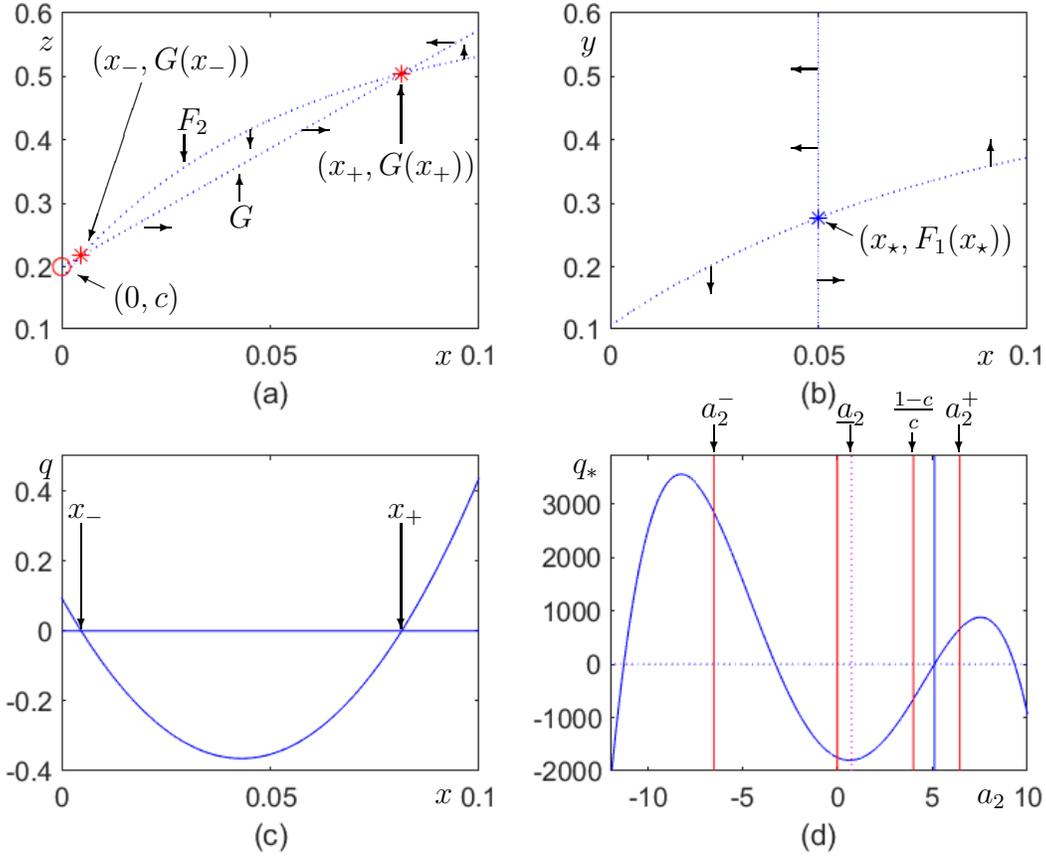}}
\put(10,297){$z$}
\put(160,178){$x$}
\put(82,230){$G$}
\put(86,240){\vector(0,1){10}}
\put(62,267){$F_2$}
\put(65,265){\vector(0,-1){10}}
\put(115,250){$(x_+,G(x_+))$}
\put(147,262){\vector(0,1){22}}
\put(30,290){$(x_-,G(x_-))$}
\put(49,285){\vector(-1,-3){20}}
\put(38,200){$(0,c)$}
\put(35,207){\vector(-2,1){10}}
\put(50,230){\vector(1,0){10}}
\put(110,267){\vector(1,0){10}}
\put(90,267){\vector(0,-1){7}}
\put(167,300){\vector(-1,0){10}}
\put(171,294){\vector(0,1){5}}

\put(215,297){$y$}
\put(365,178){$x$}
\put(320,222){$(x_\star,F_1(x_\star))$}
\put(318,227){\vector(-2,1){10}}
\put(304.5,210){\vector(1,0){10}}
\put(370,253){\vector(0,1){10}}
\put(304.5,260){\vector(-1,0){10}}
\put(304.5,290){\vector(-1,0){10}}
\put(264.2,215){\vector(0,-1){10}}

\put(160,12){$x$}
\put(21,120){$x_-$}
\put(26,118){\vector(0,-1){40}}
\put(142,120){$x_+$}
\put(147,118){\vector(0,-1){40}}
\put(10,137){$q$}

\put(365,12){$a_2$}
\put(212,137){$q_\ast$}
\put(260.5,158){$a_2^-$}
\put(265.5,155){\vector(0,-1){10}}

\put(312,158){$\underline{a}_2$}
\put(317,155){\vector(0,-1){10}}
\put(332.5,158){$\frac{1-c}{c}$}
\put(340.5,152){\vector(0,-1){7}}

\put(353,158){$a_2^+$}
\put(358,155){\vector(0,-1){10}}
\end{picture}
\caption{Special study of the case $c=.2$, $k=.95$, $x_\star=.05$, $a_1=8.5$, $b_1=50$, $a_2=4.5$, and $b_2=55$. (a) The functions $F_2$ and $G$ intersect at two competition equilibrium points in the $xz$-plane. (b) The predator-prey plane has an unstable equilibrium and possesses a unique limit cycle. (c) The function $q$ has two zeros in the unit interval and a minimum point. (d) The function $q_\ast$ has zeros in the prescribed intervals.}
\label{faces_case1}
\end{figure}

\section{The stability of the boundary equilibria}
\label{stab_bound_eq}

The generic Jacobian of (\ref{chemostat_mixo_sat_isocline_altII}) takes the form
\begin{displaymath}
J(x,y,z)=\left(\begin{array}{ccc}
\begin{array}{c} f_1^\prime(x)(F_1(x)-y)\\+f_1(x)F_1^\prime(x)-zf_2^\prime(x)\end{array} & -f_1(x) & -f_2(x)\\
y\psi^\prime(x) & \psi(x) & 0\\
zG^\prime(x) & -z & G(x)-y-2z
\end{array}\right).
\end{displaymath}
In the following we calculate the Jacobian for the different equilibrium points:
\begin{enumerate}
\item[(i)] The washout Jacobian $J(0,0,0)$ takes a diagonal form with the diagonal elements $1/k>0$, $-a_1x_\star/(k(1+b_1x_\star))<0$ and $c>0$. Therefore this equilibrium is always a saddle.
\item[(ii)] Similarly, the carrying capacity equilibrium takes an upper triangular form with diagonal $-1/k<0$, $\psi(1)>0$, $G(1)$. It has a transcritical bifurcation at
\begin{displaymath}
a_2=k(1-c)
\end{displaymath}
corresponding to the sign-change of $G(1)$. Hence this equilibrium is always a saddle.
\item[(iii)] The mixotroph carrying capacity equilibrium $(0,0,c)$ takes lower triangular form with diagonal elements
\begin{displaymath}
(1-(1+a_2)c)/k,\: -a_1x_\star/(k(1+b_1x_\star))<0,\: -c<0
\end{displaymath}
and is a saddle if $a_2<(1-c)/c$ and a stable node if $a_2>(1-c)/c$.
\item[(iv)] The Jacobian at the predator-prey equilibrium $(x_\star,F_1(x_\star),0)$ takes the block-diagonal form
\begin{displaymath}
J(x_\star,F_1(x_\star),0)=\left(\begin{array}{ccc}
                                 f_1(x_\star)F_1^\prime(x_\star) & -f_1(x_\star) & -f_2(x_\star) \\
                                 F_1(x_\star)\psi^\prime(x_\star) & 0 & 0 \\
                                  0 & 0 & G(x_\star)-F_1(x_\star)
                                \end{array}\right)
\end{displaymath}
with a $2\times 2$ block matrix reflecting the stability properties in the predator-prey plane that are already known and a single eigenvalue reflecting the behavior in the mixotroph direction of magnitude $G(x)-F_1(x)$. Criterion (\ref{iso_interior_crit}) is visible here.

\item[(v)] We are now ready to proceed to the possible competition equilibria at $(x_\pm,0,G(x_\pm))$. Now we consider the Jacobian
\begin{equation}
J(x_\pm,0,G(x_\pm))=\left(\begin{array}{ccc}
f_2(x_\pm)F_2^\prime(x_\pm) & -f_1(x_\pm) & -f_2(x_\pm)\\
0 & \psi(x_\pm) & 0\\
G(x_\pm)G^\prime(x_\pm) & -G(x_\pm) & -G(x_\pm)
\end{array}\right).
\label{Jaco_comp_eq}
\end{equation}
The corner elements form a $2\times 2$-block matrix reflecting the stability properties in the competition plane in terms of the intersections of $F_2$ and $G$. Indeed, the determinant of the corner elements is $f_2(x_\pm)G(x_\pm)(G^\prime(x_\pm)-F_2^\prime(x_\pm))$. Since $G^\prime(x_-)-F_2^\prime(x_-)<0$, it follows that $(x_-,0,G(x_-))$ is a saddle in the $xz$-plane if it exists. Similarly, $(x_+,0,G(x_+))$ is never a saddle in the $xz$-plane. Consequently $(x_+,0,G(x_+))$ is stable in the $xz$-plane if $f_2(x_+)F_2^\prime(x_+)-F_2(x_+)<0$ (Note that $F_2(x_+)=G(x_+)$). The last eigenvalue of the competition equilibria is $\psi(x_\pm)$. The second inequality of (\ref{iso_interior_crit}) implies $\max(0,x_-)<x_\star<x_+$ and therefore we always have invasion of herbivores in the vicinity of $(x_+,0,F_2(x_+))$ if $(x_\star,y_\star,z_\star)$ exists.
\end{enumerate}
Our program is now to eliminate some regions that are of limited interest for a subsequent numerical study. We begin by the following theorem.
\begin{Theorem}
Consider the competition system \em (\ref{sat_comp_altII}). \em If $G$ decreases, then all solutions converge towards an equilibrium.
\label{no_comp_cycles_G}
\end{Theorem}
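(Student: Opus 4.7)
The plan is to adapt the argument that was already used in the proof of Lemma \ref{lemma1}: establish positive invariance of a bounded trapping region, verify that the planar system is competitive in the Kamke sense, and then invoke the theorem of Smith (1995) that every bounded trajectory of a planar competitive system converges to an equilibrium.

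First I would check positive invariance of the open quadrant $\{x>0,\,z>0\}$ by noting that both $x=0$ and $z=0$ are invariant lines (so uniqueness of solutions forbids crossing them) and would extract a compact trapping region as follows. The already established boundedness from Theorem~1 applied to the subsystem $y\equiv 0$ shows that every trajectory eventually enters the triangle $x+z\leq 1$. Inside this triangle $F_2$ and $G$ are bounded, and the inequalities $\dot{x}\big|_{x=1}=-f_2(1)z\leq 0$ and $\dot{z}\big|_{z=\max_{[0,1]}G(x)}\leq 0$ confirm that a compact rectangle in the closed first quadrant is forward invariant.

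Next I would compute the Jacobian of (\ref{sat_comp_altII}),
\[
J(x,z)=\begin{pmatrix} f_2^\prime(x)(F_2(x)-z)+f_2(x)F_2^\prime(x) & -f_2(x) \\[2pt] zG^\prime(x) & G(x)-2z \end{pmatrix},
\]
and read off the off-diagonal entries. In the open quadrant $f_2(x)>0$, so $-f_2(x)<0$; and by the standing hypothesis $G^\prime(x)<0$, so $zG^\prime(x)<0$ as well. Thus (\ref{sat_comp_altII}) is a competitive planar system on the trapping region constructed above.

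The conclusion then follows from the standard monotone-systems result cited in the proof of Lemma \ref{lemma1} (Smith 1995): every bounded orbit of a $C^1$ planar competitive system has omega-limit set equal to a single equilibrium. The main obstacle, if any, is purely bookkeeping — one has to ensure that the trapping region built above is genuinely forward invariant up to and including the coordinate axes, so that Smith's theorem applies uniformly to all nonnegative initial data and not merely to strictly interior ones; this is handled by treating orbits starting on an axis separately via the one-dimensional reductions $\dot{x}=x(1-x)$ and $\dot{z}=z(c-z)$.
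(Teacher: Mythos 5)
Your argument is correct and is essentially the paper's own proof: you read off the negative off-diagonal Jacobian entries $-f_2(x)$ and $zG^\prime(x)$ (the latter negative because $G$ decreases), conclude the planar system is competitive, and invoke Smith (1995) to get convergence of every solution to an equilibrium. The additional bookkeeping you supply on invariance, boundedness, and the axes is harmless extra detail that the paper leaves implicit.
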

\begin{proof}
We note that the Jacobian of (\ref{sat_comp_altII}) has the off-diagonal elements
\begin{displaymath}
-f_2(x)<0\:{\rm{and}}\: zG^\prime(x)<0
\end{displaymath}
meaning that the system is competitive, ie its solutions are backwards monotone. All solutions of a backwards monotone two-dimensional system converge towards an equilibrium, see e. g. Smith (1995)\nocite{Smith.monotone}.
\end{proof}
The next theorem excludes cycles in the competition plane for low saturation levels. In particular, the unsaturated competition system (\ref{sat_comp_altII}) with $b_2=0$ cannot possess competition cycles. This excludes a large parameter region that becomes of limited interest for the subsequent numerical part in the end of this paper.
\begin{Theorem}
Consider the competition system \em (\ref{sat_comp_altII}). \em If $F_2$ decreases, then all solutions converge towards an equilibrium.
\label{no_comp_cycles}
\end{Theorem}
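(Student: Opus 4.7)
The plan is to rule out periodic orbits via the Bendixson--Dulac criterion and then invoke the Poincar\'{e}--Bendixson theorem. Unlike in Theorem \ref{no_comp_cycles_G}, where $G^\prime<0$ immediately supplies a competitive sign pattern in the off-diagonal Jacobian entries, the hypothesis $F_2^\prime\le 0$ produces no such monotone structure (the off-diagonal entry $zG^\prime(x)$ may well be positive), so a different tool is needed.

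I would try the Dulac multiplier $B(x,z)=1/(f_2(x)z)$, which is smooth and strictly positive on the open first quadrant because $f_2(x)=x(1+a_2+b_2x)/k>0$ for $x>0$. A short calculation gives
\begin{displaymath}
B\dot{x}=\frac{F_2(x)-z}{z},\qquad B\dot{z}=\frac{G(x)-z}{f_2(x)},
\end{displaymath}
so that
\begin{displaymath}
\frac{\partial (B\dot{x})}{\partial x}+\frac{\partial (B\dot{z})}{\partial z}=\frac{F_2^\prime(x)}{z}-\frac{1}{f_2(x)}.
\end{displaymath}
Under the assumption $F_2^\prime\le 0$ this divergence is strictly negative throughout the open first quadrant, so Bendixson--Dulac excludes closed orbits there; since the coordinate axes are invariant and carry only boundary equilibria connected by monotone boundary orbits (the $x$-axis flows from $(0,0)$ to $(1,0)$ and the $z$-axis from $(0,0)$ to $(0,c)$), no periodic orbit of (\ref{sat_comp_altII}) is possible at all.

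Combined with boundedness of solutions (Theorem 1) and the finite list of equilibria from Section \ref{number_eq}, the Poincar\'{e}--Bendixson theorem forces each $\omega$-limit set to reduce to a single equilibrium. Any potential heteroclinic polycycle contained in the open first quadrant would be ruled out by the same Dulac calculation via Green's theorem (the flux of $B\mathbf{F}$ across an orbital closed curve vanishes, while the enclosed area integral of the divergence is strictly negative), and cycles involving axis equilibria cannot close up because $(0,0)$ is a source and the boundary orbits run outward. The main obstacle is really just spotting the Dulac multiplier: once one notices that dividing $\dot{x}$ by $f_2(x)$ collapses its $x$-derivative to $F_2^\prime(x)$ (up to the harmless factor $1/z$), the choice $B=1/(f_2(x)z)$ is essentially forced, and everything else is mechanical.
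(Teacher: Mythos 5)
Your proposal is correct and takes essentially the same route as the paper: the identical Dulac multiplier $B(x,z)=1/(zf_2(x))$, the same divergence estimate $F_2^\prime(x)/z-1/f_2(x)<0$, and then Poincar\'{e}--Bendixson to reduce all bounded limit sets to equilibria. The only minor difference is in excluding graphics: the paper observes that an interior saddle forces $G$ to decrease and defers to Theorem \ref{no_comp_cycles_G}, while you dispose of interior polycycles directly via the Green's-theorem form of the Dulac argument and of boundary ones via the source at the origin and the invariant axes --- both handlings are sound.
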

\begin{proof}
Consider (\ref{sat_comp_altII}) and the Dulac function $B(x,z)=\frac{1}{zf_2(x)}$. The divergence of the system
\begin{eqnarray*}
  \dot{x} &=& \frac{F_2(x)}{z}-1 \\
  \dot{z} &=& \frac{G(x)-z}{f_2(x)}
\end{eqnarray*}
can now be estimated as
\begin{displaymath}
\frac{F_2^\prime(x)}{z}-\frac{1}{f_2(x)}<-\frac{1}{f_2(x)}<0.
\end{displaymath}
By Dulac's theorem (Brauer and Castillo-Ch\'{a}vez (2001)\nocite{Brauer.Mathmod} and Ye (1986)\nocite{ye}), (\ref{sat_comp_altII}) cannot possess periodic orbits. By the Poincar\'{e}-Bendixson's theorem (Hirsch et al. (2013)\nocite{hirschsmaledevaney} and Wiggins (2003)\nocite{wiggins2003}), possible limit sets are equilibria or orbits connecting equilibria. The origin is not a saddle and cannot belong to a limit set consisting of equilibria and orbits connecting them. If a saddle exist in the interior of the positive quadrant, then $G$ must decrease between $x_-$ and $x_+$. In this case, $G$ decreases on the unit interval and the solutions converge towards an equilibrium by Theorem \ref{no_comp_cycles_G}. Assume therefore, that the saddles of (\ref{sat_comp_altII}) are located either at the $x$-axis or at the $z$-axis. In both cases, either its stable or unstable manifolds are located along the axes. Possible saddle-connections violate therefore, uniqueness of solutions. Consequently, all solutions converge towards an equilibrium.
\end{proof}

\begin{figure}
\epsfxsize=138mm
\begin{picture}(350,350)(0,0)
\put(0,0){\epsfbox{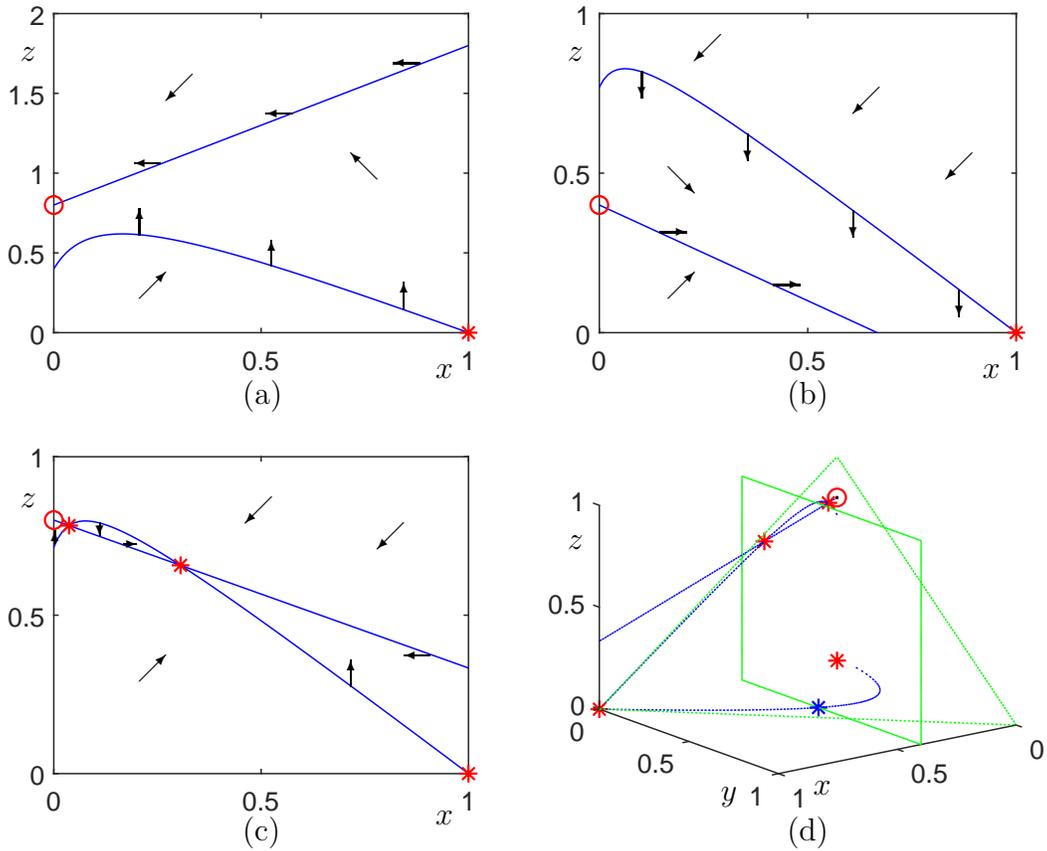}}
\put(89,155){(a)}
\put(5,285){$z$}
\put(162,165){$x$}
\put(50,219){\vector(0,1){10}}
\put(100,207){\vector(0,1){10}}
\put(150,191){\vector(0,1){10}}
\put(58,246){\vector(-1,0){10}}
\put(108,265){\vector(-1,0){10}}
\put(156,284){\vector(-1,0){10}}
\put(140,240){\vector(-1,1){10}}
\put(70,280){\vector(-1,-1){10}}
\put(50,195){\vector(1,1){10}}
\put(295,155){(b)}
\put(212,285){$z$}
\put(369,165){$x$}
\put(290,200){\vector(1,0){10}}
\put(247,220){\vector(1,0){10}}
\put(240,281){\vector(0,-1){10}}
\put(280,257){\vector(0,-1){10}}
\put(320,228){\vector(0,-1){10}}
\put(360,198){\vector(0,-1){10}}
\put(250,195){\vector(1,1){10}}
\put(250,245){\vector(1,-1){10}}
\put(270,295){\vector(-1,-1){10}}
\put(330,275){\vector(-1,-1){10}}
\put(365,250){\vector(-1,-1){10}}
\put(89,-10){(c)}
\put(5,116){$z$}
\put(162,-4){$x$}
\put(160,60){\vector(-1,0){10}}
\put(130,48){\vector(0,1){10}}
\put(35,110){\vector(0,-1){5}}
\put(44,102){\vector(1,0){5}}
\put(18,102){\vector(0,1){5}}
\put(50,50){\vector(1,1){10}}
\put(150,110){\vector(-1,-1){10}}
\put(100,120){\vector(-1,-1){10}}
\put(295,-10){(d)}
\put(212,100){$z$}
\put(270,5){$y$}
\put(305,7){$x$}
\end{picture}
\caption{(a) Phase-plane arguments for globally attracting $(0,c)$ when $G>F_2$ in the case $c=.4$, $k=.75$, $a_2=.3$, and $b_2=20$. (b) Phase-plane arguments for globally attracting $(1,0)$ when $F_2>G$ in the case $c=.8$, $k=.75$, $a_2=1.5$, and $b_2=20$. (c) Multiple attractors in the competition plane for $c=.8$, $k=.75$, $a_2=.4$, and $b_2=20$. (d) Multiple attractors persist in system (\protect\ref{chemostat_mixo_sat_scaled_altII}) for $c=.8$, $k=.75$, $x_\star=.4$, $a_1=8.5$, $b_1=50$, $a_2=.4$, and $b_2=20$. The equilibria $(0,0,0)$ (red *-mark), $(1,0,0)$ (red *-mark), $(x_\star, F_1(x_\star),0)$ (blue *-mark), $(x_+,0,F_2(x_+))$ (red *-mark), $(x_-,0,F_2(x_-))$ (red *-mark), and $(0,0,c)$ (red o-mark) exist and are marked. The boundary of the simplex $x+y+z\leq 1$ and the plane $x=x_\star$ are marked in green.}
\label{multiple_attractors}
\end{figure}

We eliminate the last large region of limited interest by the following theorem.
\begin{Theorem}
Consider the competition system \em (\ref{sat_comp_altII}). \em If no competition equilibria exist, then all solutions converge towards an equilibrium.
\end{Theorem}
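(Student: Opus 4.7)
The plan is to invoke the Poincaré–Bendixson theorem together with an index–theoretic exclusion of periodic orbits and a boundary–flow exclusion of heteroclinic cycles. Since solutions of (\ref{sat_comp_altII}) are bounded (they eventually enter a compact positively invariant region of the closed first quadrant, and the coordinate axes $\{x=0\}$ and $\{z=0\}$ are invariant), the $\omega$-limit set of any orbit is nonempty, compact, and connected. Under the hypothesis that no competition equilibria exist, the only equilibria in the closure of the first quadrant are the three boundary ones $(0,0)$, $(1,0)$, and $(0,c)$, whose types were determined in Section \ref{stab_bound_eq}: $(0,0)$ is always an unstable node of the two–dimensional subsystem, and exactly one of $(1,0)$ and $(0,c)$ is a hyperbolic sink while the other is a hyperbolic saddle, according to whether $a_2<k(1-c)$ or $a_2>(1-c)/c$ (the intermediate range $k(1-c)<a_2<(1-c)/c$ produces a competition equilibrium and is therefore excluded by the hypothesis).

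First I would rule out periodic orbits in the open first quadrant. By the Poincaré index theorem any periodic orbit of a planar $C^1$ vector field must enclose at least one equilibrium. Since $\{x=0\}$ and $\{z=0\}$ are invariant, an orbit lying strictly in $\{x>0,\:z>0\}$ bounds a topological disk contained entirely in the open first quadrant; this disk is therefore disjoint from all axis equilibria and, by hypothesis, contains no interior equilibrium, a contradiction. Next I would rule out heteroclinic (and homoclinic) cycles. On the $x$-axis, $\dot x=f_2(x)F_2(x)$ with $F_2(0)=1/(1+a_2)>0$ and $F_2(1)=0$, so the boundary flow moves monotonically from $(0,0)$ to $(1,0)$. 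Similarly on the $z$-axis, $\dot z=z(c-z)$ carries $(0,0)$ to $(0,c)$ monotonically. The unstable manifold of the unique saddle among $(1,0)$ and $(0,c)$ enters the open quadrant and, in the absence of interior equilibria and periodic orbits, must accumulate on the unique sink; there is no return path either along the axes (the boundary flow is one–way) or through the interior, so no closed heteroclinic loop can form.

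Having excluded periodic orbits and heteroclinic cycles, the Poincaré–Bendixson theorem forces the $\omega$-limit set of every orbit to be a single equilibrium, since the equilibrium set is discrete and $\omega$-limit sets are connected. This proves convergence to an equilibrium. The step I expect to be the main obstacle is the exclusion of heteroclinic cycles: one must enumerate the possible interior connecting orbits in each of the two regimes ($a_2<k(1-c)$ and $a_2>(1-c)/c$), using the sign of $F_2-G$ on $(0,1)$ together with the invariant–axis observation to show that every saddle separatrix entering the interior ends at the unique sink and no orbit can close the loop.
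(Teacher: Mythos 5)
Your proof is correct in substance, but it follows a genuinely different route from the paper's. The paper's own proof is a two-line phase-plane argument: if no competition equilibria exist, the isoclines $F_2$ and $G$ of (\ref{sat_comp_altII}) do not intersect, so either $G>F_2$ or $F_2>G$ throughout, and in each case the nullcline configuration forces all positive solutions into $(0,c)$ or $(1,0)$, respectively (Figure \ref{multiple_attractors}(a)--(b)); this is more elementary and simultaneously identifies the global attractor, and it is insensitive to the borderline parameter values $a_2=k(1-c)$ and $a_2=(1-c)/c$, where your hyperbolicity classification of $(1,0)$ and $(0,c)$ degenerates. You instead run the Poincar\'{e}--Bendixson machinery: index theory kills interior periodic orbits (legitimate here precisely because there are no interior equilibria, so no Dulac function is needed), and you then exclude graphics. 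That second step is the one place you should tighten: the decisive observation is that the unique saddle's global \emph{stable} manifold is contained in a coordinate axis (the $x$-axis for $(1,0)$, the $z$-axis for $(0,c)$), so by invariance and uniqueness of solutions no interior orbit can converge to the saddle in forward time, hence no homoclinic loop and no cycle of equilibria can occur (the source and the sink cannot belong to a graphic at all); as written, "no return path through the interior" is asserted rather than derived, and the claim that the saddle's unstable manifold must accumulate on the sink is not needed and risks circularity. This stable-manifold/uniqueness argument is exactly the device the paper itself uses in the proof of Theorem \ref{no_comp_cycles}, so your approach essentially transplants that technique to the present statement, buying a more formal argument that avoids reliance on a figure, at the cost of slightly weaker information (convergence to some equilibrium rather than identification of the attractor) and a need to handle the non-hyperbolic transcritical values separately.
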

\begin{proof}
Since, $F_2$ and $G$ have no intersections, we have either that $G>F_2$, or $F_2>G$. In each of the cases, and all positive solutions converge towards $(0,c)$ or towards $(1,0)$, respectively, by phase-plane arguments, see Figure \ref{multiple_attractors}(a)-(b).
\end{proof}
We conclude this section by proving that Theorem \ref{no_comp_cycles_G} implies the presence of parameter values giving rise to multiple attractors. Indeed, the parameter values $c=.8$, $k=.75$, $a_2=.4$, and $b_2=20$ gives rise to multiple attractors for the competition system (\ref{sat_comp_altII}), see Figure \ref{multiple_attractors}(c). The equilibria $(0,c)$ and $(x_+,F(x_+))$ are both locally attracting and no cycles exist by Theorem \ref{no_comp_cycles_G}. This situation persists for (\ref{chemostat_mixo_sat_scaled_altII}) if $x_\star>x_+$ is selected. The situation is depicted in Figure \ref{multiple_attractors}(d) for $c=.8$, $k=.75$, $x_\star=.4$, $a_1=8.5$, $b_1=50$, $a_2=.4$, and $b_2=20$. The equilibria $(0,0,c)$ and $(x_+,0,F_2(x_+))$ are still both locally stable.

\section{The stability of the coexistence equilibrium}
\label{stab_int_eq}

We proceed to the interior equilibrium $(x_\star,y_\star,z_\star)$. Its Jacobian matrix is given by
\begin{displaymath}
J(x_\star,y_\star,z_\star)=\left(\begin{array}{ccc}
\begin{array}{c} f_1(x_\star)F_1^\prime(x_\star)\\-z_\star\left(f_2^\prime(x_\star)-f_1^\prime(x_\star)\frac{f_2(x_\star)}{f_1(x_\star)}\right)\end{array} & -f_1(x_\star) & -f_2(x_\star)\\
y_\star\psi^\prime(x_\star) & 0 & 0\\
z_\star G^\prime(x_\star) & -z_\star  & -z_\star
\end{array}\right).
\end{displaymath}
with
\begin{displaymath}
y_\star=\frac{f_2(x_\star)(F_2(x_\star)-G(x_\star))}{f_1(x_\star)-f_2(x_\star)},\:\:z_\star=\frac{f_1(x_\star)(G(x_\star)-F_1(x_\star))}{f_1(x_\star)-f_2(x_\star)}.
\end{displaymath}

In order to continue formulating stability conditions for this equilibrium, we first give the Routh (1877)\nocite{routh1}-Hurwitz (1895)\nocite{MA.Hurwitz:46} conditions (see also May (1974)\nocite{may} and Wiggins (2003)\nocite{wiggins2003}) in matrix form. The matrixes $A_{ii}$ are the submatrixes obtained when the $i$th row and column are deleted from $A$. Sometimes they are referred to as principal submatrices, see e. g. Strang (2006)\nocite{Strang.Linear}.
\begin{lemma}
The eigenvalues of the $3\times 3$ matrix $A$ have negative real parts if and only if \em (i) \em ${\rm{Tr}{A}}<0$, \em (ii) \em ${\rm{det}}A<0$, and
\begin{displaymath}
{\rm{(iii)}}\:\:{\rm{Tr}{A}}({\rm{det}}A_{11}+{\rm{det}}A_{22}+{\rm{det}}A_{33})-{\rm{det}}A<0.
\end{displaymath}
If some of the eigenvalues have zero real part, then equality must hold in either \em (ii) \em or \em (iii). \em In particular, if some of the eigenvalues are zero, then equality holds in \em (ii).\em
\label{RouthHurwitzMatrixForm}
\end{lemma}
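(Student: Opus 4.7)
The plan is to reduce the matrix formulation directly to the classical Routh--Hurwitz criterion for a scalar cubic. First I would expand the characteristic polynomial of $A$ as $p(\lambda)=\det(\lambda I-A)=\lambda^3+c_2\lambda^2+c_1\lambda+c_0$. By the standard invariant expansion one has $c_2=-\text{Tr}(A)$, while $c_1$ equals the sum of the three $2\times 2$ principal minors $\det A_{11}+\det A_{22}+\det A_{33}$ (with the $A_{ii}$ defined as in the statement), and $c_0=-\det A$.

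Next I would invoke the scalar Routh--Hurwitz theorem: a real monic cubic has all roots with negative real parts if and only if $c_2>0$, $c_0>0$, and $c_1c_2-c_0>0$. Substituting the identities above converts these three inequalities line-by-line into conditions (i), (ii), and (iii) of the lemma; the translation of (iii) amounts only to moving a single minus sign, since $c_1c_2-c_0>0$ rewrites as $-\text{Tr}(A)(\det A_{11}+\det A_{22}+\det A_{33})+\det A>0$.

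The boundary statements follow by tracing the same chain of equivalences with equality in place of strict inequality. If $\lambda=0$ is an eigenvalue then $p(0)=c_0=-\det A=0$, which is equality in (ii). If instead a nonzero purely imaginary eigenvalue $\lambda=i\omega$ is present, substituting into $p$ and separating real and imaginary parts yields $c_0=c_2\omega^2$ from the real part and $\omega^2=c_1$ from the imaginary part, so $c_0=c_1c_2$, i.e.\ equality in (iii). Any eigenvalue with vanishing real part is either zero or belongs to such a conjugate pair, so one of the two alternatives always applies.

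The main obstacle is essentially sign bookkeeping when passing between the coefficients of $p$ and the matrix invariants $\text{Tr}(A)$, $\det A$, and the sum of principal $2\times 2$ minors; in particular one must check the standard invariant identity that the middle coefficient of $p(\lambda)$ equals $\det A_{11}+\det A_{22}+\det A_{33}$. Once that identity is in hand there is no further analytic content: everything reduces to the cited classical scalar theorem (Routh (1877), Hurwitz (1895), May (1974), Wiggins (2003)).
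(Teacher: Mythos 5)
Your proposal is correct and follows essentially the same route as the paper: identify the coefficients of the characteristic polynomial with $-\mathrm{Tr}\,A$, the sum of principal $2\times 2$ minors, and $-\det A$, invoke the classical scalar Routh--Hurwitz criterion for the cubic, and then treat the boundary cases (a zero eigenvalue, or a purely imaginary conjugate pair) by direct root--coefficient considerations. The only cosmetic difference is that the paper verifies the imaginary-pair case via Vieta's relations for the triple $(\lambda,\pm i\omega)$, whereas you substitute $\lambda=i\omega$ into the polynomial and separate real and imaginary parts, which yields the same equality in (iii).
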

\begin{proof} Identification of the coefficients of the characteristic polynomial gives the matrix form of the criterion. Since $\det A$ is the product of the eigenvalues, a zero eigenvalue gives equality in (ii). The next possibility for zero real parts is a real eigenvalue $\lambda_1=\lambda<0$ and complex conjugate pair $\lambda_{2,3}=\pm\omega i$. The relation between roots and coefficients implies that
\begin{eqnarray*}
  {\rm{Tr}}A &=& \lambda_1+\lambda_2+\lambda_3=\lambda<0\\
  {\rm{det}}A_{11}+{\rm{det}}A_{22}+{\rm{det}}A_{33} &=& \lambda_1\lambda_2+\lambda_2\lambda_3+\lambda_1\lambda_3 \\
  &=&\lambda\omega+\omega^2-\lambda\omega=\omega^2>0\\
  {\rm{det}}A &=& \lambda_1\lambda_2\lambda_3=\lambda\omega^2<0
\end{eqnarray*}
Criterion (iii) now becomes
\begin{displaymath}
{\rm{Tr}{A}}({\rm{det}}A_{11}+{\rm{det}}A_{22}+{\rm{det}}A_{33})-{\rm{det}}A=\lambda\omega^2-\lambda\omega^2=0.
\end{displaymath}
\end{proof}
\begin{corollary}
A real $3\times 3$-matrix $A$ cannot lose its stability by passing through an equality in \em (i) \em in Lemma \em \ref{RouthHurwitzMatrixForm} \em above.
\label{stability-loss}
\end{corollary}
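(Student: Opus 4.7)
The plan is to prove the contrapositive: I will show that any real $3\times 3$ matrix $A$ with ${\rm Tr}\,A = 0$ that also satisfies the strict inequalities \em(ii)\em\ and \em(iii)\em\ of Lemma~\ref{RouthHurwitzMatrixForm} must already have an eigenvalue with strictly positive real part. Hence the boundary of the stability region cannot be reached through equality in \em(i)\em\ alone; along any continuous deformation of $A$ out of the stable region, the first Routh--Hurwitz inequality to fail must be \em(ii)\em\ or \em(iii)\em.

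The only tool needed is the relation between the roots and coefficients of the characteristic polynomial. Denote the eigenvalues by $\lambda_1,\lambda_2,\lambda_3$ so that
\begin{displaymath}
\lambda_1+\lambda_2+\lambda_3 = {\rm Tr}\,A = 0, \qquad \lambda_1\lambda_2\lambda_3 = {\rm det}\,A < 0.
\end{displaymath}
Since $A$ is real, its spectrum is either three real eigenvalues, or one real eigenvalue together with a complex conjugate pair. I would handle these two cases separately. If all three are real, the product being negative forces an odd number of negative eigenvalues; three negative numbers cannot sum to zero, so exactly one is negative and two are positive, producing eigenvalues with positive real part. If instead $\lambda_1\in\mathbb{R}$ and $\lambda_{2,3}=\alpha\pm i\beta$ with $\beta\neq 0$, then $\lambda_1(\alpha^2+\beta^2)<0$ forces $\lambda_1<0$, and the trace relation $\lambda_1+2\alpha=0$ then gives $\alpha=-\lambda_1/2>0$, so again a positive-real-part eigenvalue is present.

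In either case $A$ is unstable rather than neutrally stable, which establishes the corollary: approaching the stability boundary with strict \em(ii)\em\ and \em(iii)\em\ forces ${\rm Tr}\,A$ to remain strictly negative, in agreement with the fact recorded in Lemma~\ref{RouthHurwitzMatrixForm} that crossings occur through a zero eigenvalue (equality in \em(ii)\em) or a pure imaginary pair (equality in \em(iii)\em). There is no substantial obstacle here; the only care needed is the bookkeeping of the two possible spectral structures of a real $3\times 3$ matrix, together with the observation that the strict inequality ${\rm det}\,A<0$ prevents the degenerate configuration $\lambda_1=\alpha=0$ that would otherwise reconcile ${\rm Tr}\,A=0$ with marginal stability.
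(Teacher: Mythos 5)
Your argument is correct, but it takes a different route from the paper. The paper's proof is a one-line consequence of Lemma \ref{RouthHurwitzMatrixForm}: substituting ${\rm Tr}\,A=0$ while ${\rm det}\,A<0$ makes the quantity in (iii) equal to $-{\rm det}\,A>0$, so neither equality in (ii) nor in (iii) can hold there, and by the lemma no eigenvalue can have zero real part at such a point; hence the stability boundary cannot be crossed at ${\rm Tr}\,A=0$. You instead bypass criterion (iii) entirely and argue directly from the root--coefficient relations: with ${\rm Tr}\,A=0$ and ${\rm det}\,A<0$, a real spectrum must consist of one negative and two positive eigenvalues, and a complex pair $\alpha\pm i\beta$ must have $\alpha=-\lambda_1/2>0$, so the matrix is already strictly unstable, not marginally stable, at any such point. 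Your version is more elementary (it does not lean on the marginal-stability clause of the lemma) and yields slightly more, namely an eigenvalue in the open right half-plane rather than merely the failure of the Routh--Hurwitz conditions; the paper's version is shorter because it reuses the lemma just proved. One small framing slip: you announce that you treat matrices satisfying the \emph{strict} inequalities (ii) \emph{and} (iii) with ${\rm Tr}\,A=0$, but (iii) is never used in your argument, and in fact no matrix with ${\rm Tr}\,A=0$ and ${\rm det}\,A<0$ can satisfy (iii) (that is precisely the paper's observation), so that hypothesis set is empty as stated; since your deduction only uses ${\rm Tr}\,A=0$ and ${\rm det}\,A<0$, this is harmless, but the hypothesis (iii) should simply be dropped from the statement of what you prove.
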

\begin{proof}
If $\det A<0$ and ${\rm{Tr}}A=0$, then (iii) becomes
\begin{displaymath}
{\rm{Tr}{A}}({\rm{det}}A_{11}+{\rm{det}}A_{22}+{\rm{det}}A_{33})-{\rm{det}}A=-\det A>0
\end{displaymath}
contradicting the possibility for stability losses at ${\rm{Tr}}A=0$.
\end{proof}
\begin{remark}\em
Even if stability losses at ${\rm{Tr}}A=0$ are excluded by the above corollary, criterion (i) is still a necessary criterion for stability in Lemma \ref{RouthHurwitzMatrixForm}. Compare e. g. with the case $\lambda_1=-1$, $\lambda_2=\frac{1}{2}$, $\lambda_3=\frac{3}{2}$. We get
\begin{eqnarray*}
  {\rm{Tr}}A &=& \lambda_1+\lambda_2+\lambda_3=-1+\frac{1}{2}+\frac{3}{2}=1>0 \\
  \det A &=& \lambda_1\lambda_2\lambda_3=-1\cdot\frac{1}{2}\cdot\frac{3}{2}=-\frac{3}{4}<0  \\
  \det A_{11}+{\rm{det}}A_{22}+{\rm{det}}A_{33} &=& \lambda_1\lambda_2+\lambda_1\lambda_3+\lambda_2\lambda_3= -\frac{1}{2}-\frac{3}{2}+\frac{3}{4}=-\frac{5}{4}
\end{eqnarray*}
meaning that
\begin{displaymath}
{\rm{Tr}{A}}({\rm{det}}A_{11}+{\rm{det}}A_{22}+{\rm{det}}A_{33})-{\rm{det}}A=
1\cdot\left(-\frac{5}{4}\right)-\left(-\frac{3}{4}\right)=-\frac{1}{2}<0.
\end{displaymath}
We see that the associated Jacobian matrix is unstable and that criteria (ii) and (iii) in Theorem \ref{RouthHurwitzMatrixForm} are satisfied.
\em\end{remark}
We now formulate the stability criteria of the interior fixed point in terms of criteria needed in addition to the global stability condition of the the autotroph-herbivore equilibrium in the $xy$-plane.
\begin{Theorem}
The coexistence equilibrium $(x_\star,y_\star,z_\star)$ is locally stable if
\begin{enumerate}
  \item [\em(i)\em] $F_1^\prime(x_\star)<0$
  \item [\em(ii)\em] $G^\prime(x_\star)>0$
  \item [\em(iii)\em] $f_1(x_\star)f_2^\prime(x_\star)-f_1^\prime(x_\star)f_2(x_\star)>0$
\end{enumerate}
The coexistence equilibrium loses its stability through a Hopf-bifurcation, if it exists.
\label{complexity-stability-relation}
\end{Theorem}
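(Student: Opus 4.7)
The plan is to invoke the matrix-form Routh--Hurwitz criterion (Lemma \ref{RouthHurwitzMatrixForm}) applied to the Jacobian $J(x_\star,y_\star,z_\star)$ displayed just before the theorem. My first step is to rewrite the $(1,1)$-entry using condition (iii), which says exactly that $W:=f_1(x_\star)f_2'(x_\star)-f_1'(x_\star)f_2(x_\star)>0$, so
\begin{equation*}
J_{11} \;=\; f_1(x_\star)F_1'(x_\star)\;-\;\frac{z_\star W}{f_1(x_\star)}.
\end{equation*}
Under hypotheses (i) and (iii) both summands are negative, hence $J_{11}<0$, and consequently $\operatorname{Tr} J=J_{11}-z_\star<0$, which is Routh--Hurwitz~(i).

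Second, I expand $\det J$ along the second row (which has two zeros) to obtain $\det J = -y_\star\psi'(x_\star)\,z_\star\,(f_1(x_\star)-f_2(x_\star))$. Since the coexistence equilibrium is assumed to exist, Lemma \ref{lemma_loc_cond} gives $f_1(x_\star)>f_2(x_\star)$; combined with $\psi'(x_\star)>0$ this yields $\det J<0$, verifying Routh--Hurwitz~(ii). The principal minor $\det A_{11}$ vanishes (again because of the second-row zeros), and the remaining two combine to
\begin{equation*}
\det A_{22}+\det A_{33} \;=\; -z_\star J_{11}+z_\star f_2(x_\star)G'(x_\star)+f_1(x_\star)y_\star\psi'(x_\star),
\end{equation*}
a strictly positive quantity by hypothesis (ii), the sign of $J_{11}$, and $\psi'(x_\star)>0$.

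The main obstacle is Routh--Hurwitz~(iii). I would expand $(-\operatorname{Tr} J)\sum_i\det A_{ii}+\det J$ term by term, notice that the two occurrences of $\pm z_\star f_1(x_\star)y_\star\psi'(x_\star)$ cancel, and regroup the surviving six monomials into the clean identity
\begin{equation*}
(-\operatorname{Tr} J)\sum_{i=1}^{3}\det A_{ii}+\det J \;=\; z_\star(z_\star-J_{11})\bigl(f_2(x_\star)G'(x_\star)-J_{11}\bigr)+y_\star\psi'(x_\star)\bigl(z_\star f_2(x_\star)-f_1(x_\star)J_{11}\bigr).
\end{equation*}
Every factor on the right is positive under our hypotheses (recall $J_{11}<0$, $G'(x_\star)>0$), so the inequality in Lemma \ref{RouthHurwitzMatrixForm}(iii) holds strictly. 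The algebraic regrouping is the only genuinely intricate step; everything else is sign-reading.

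For the concluding sentence, as long as the interior equilibrium persists with $y_\star,z_\star>0$, Lemma \ref{lemma_loc_cond} keeps $f_1(x_\star)-f_2(x_\star)>0$ and therefore $\det J$ strictly negative, so Routh--Hurwitz~(ii) cannot be violated on any stability boundary. Corollary \ref{stability-loss} rules out loss of stability at $\operatorname{Tr} J=0$. Hence the only available mechanism for stability loss is equality in Routh--Hurwitz~(iii), producing a conjugate pair of purely imaginary eigenvalues together with a negative real eigenvalue, which is the defining algebraic signature of a Hopf bifurcation.
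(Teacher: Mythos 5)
Your proposal is correct and follows essentially the same route as the paper: the matrix-form Routh--Hurwitz criterion (Lemma \ref{RouthHurwitzMatrixForm}) applied to the same Jacobian, with identical trace, determinant, and principal-minor computations, and the same use of $\det J<0$ (via Lemma \ref{lemma_loc_cond} and $\psi'(x_\star)>0$) to exclude zero eigenvalues and conclude that stability can only be lost through a Hopf bifurcation. The only difference is organizational: where the paper reduces condition (iii) to sign-reading plus a separate verification that $\det J_{22}>0$, you regroup the same expression into the identity $(-\operatorname{Tr}J)\sum_i\det A_{ii}+\det J=z_\star\bigl(z_\star-J_{11}\bigr)\bigl(f_2(x_\star)G'(x_\star)-J_{11}\bigr)+y_\star\psi'(x_\star)\bigl(z_\star f_2(x_\star)-f_1(x_\star)J_{11}\bigr)$, which checks out algebraically and makes the positivity immediate from $J_{11}<0$ and $G'(x_\star)>0$.
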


\begin{remark}\em
The requirement that the herbivore-autotroph equilibrium is stable in the plane $z=0$ is a visible in condition (i). The requirement comes from Lemma \ref{glob_stab}. Criterion (ii) is easy to check, $a_2>k$ is sufficient.
\em\end{remark}

\begin{remark}\em
Criterion (iii) is strongly related to the local condition (\ref{local_condition_B}) and the upper bound for $a_2$ in (\ref{upper_bound_a2}). However, they are not equivalent and the ratio between $b_1$ and $b_2$ determines the direction of the implication between Criterion (iii) and (\ref{local_condition_B}). Criterion (iii) can be modified into the equivalent parametric condition
\begin{displaymath}
a_1b_1-a_2b_2+a_1a_2(b_1-b_2)+2b_1b_2(a_1-a_2)x_\star-b_1b_2(a_2b_1-a_1b_2)x_\star^2>0.
\end{displaymath}
Lemma \ref{lemma_loc_cond} (iii) can now be used to obtain the estimate
\begin{eqnarray}
a_1b_1-a_2b_2+a_1a_2(b_1-b_2)+2b_1b_2(a_1-a_2)x_\star-b_1b_2(a_2b_1-a_1b_2)x_\star^2&>&\label{par_ineq}\\
a_1b_1-a_2b_2+a_1a_2(b_1-b_2)+b_1b_2(a_1-a_2)x_\star&>&0.\nonumber
\end{eqnarray}
We see that all terms in the last expression are positive for $b_1>b_2$. Indeed, (\ref{upper_bound_a2}) gives $a_2<a_1$ in this case. Therefore, (\ref{local_condition_B}) ensures Criterion (iii) in Theorem \ref{complexity-stability-relation} for $b_1>b_2$. If $b_1<b_2$, we have $a_2<a_1b_2/b_1$. We now use this relation for deriving a stricter but very similar condition as (\ref{upper_bound_a2}). The second inequality in (\ref{par_ineq}) gives
\begin{displaymath}
a_2<\frac{a_1b_1+b_1b_2a_1x_\star}{b_2+a_1(b_2-b_1)+b_1b_2x_\star}=a_1\frac{b_1}{b_2}\frac{1+b_2x_\star}{1+\frac{a_1}{b_2}(b_2-b_1)+b_1x_\star}<a_1\frac{1+b_2x_\star}{1+b_1x_\star}.
\label{a2_estimate}
\end{displaymath}
The last inequality compares this estimate with (\ref{upper_bound_a2}). Since our estimate increases with $x_\star$, worst cases can be computed with $x_\star=0$. In many cases such estimates makes criterion (iii) redundant in comparison to the essential criterion (i). The situation for $b_1=50$, $b_2=55$, $a_1=8.5$ is illustrated in Figure \ref{a2_limits}
\em\end{remark}
\begin{figure}
\epsfxsize=138mm
\begin{picture}(350,350)(0,0)
\put(0,0){\epsfbox{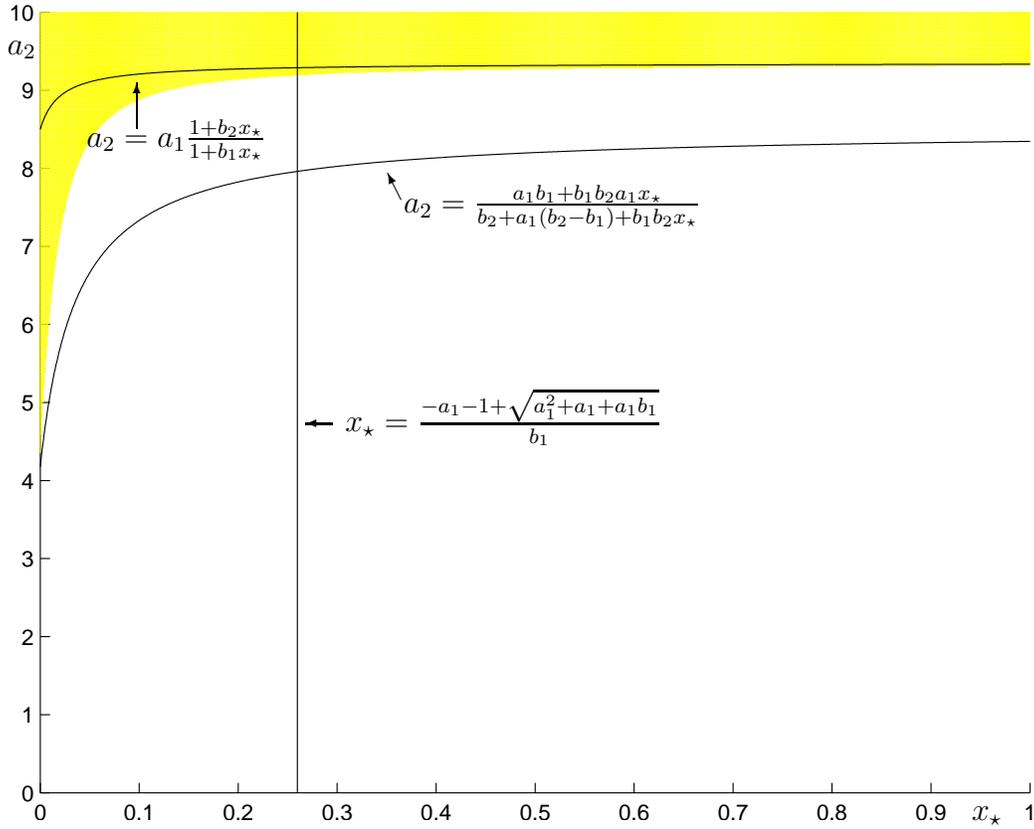}}
\put(365,0){$x_\star$}
\put(123,150){\vector(-1,0){10}}
\put(128,147){$x_\star=\frac{-a_1-1+\sqrt{a_1^2+a_1+a_1b_1}}{b_1}$}
\put(149,235){\vector(-1,2){5}}
\put(150,230){$a_2=\frac{a_1b_1+b_1b_2a_1x_\star}{b_2+a_1(b_2-b_1)+b_1b_2x_\star}$}
\put(49,262){\vector(0,1){17}}
\put(30,255){$a_2=a_1\frac{1+b_2x_\star}{1+b_1x_\star}$}
\put(0,290){$a_2$}
\end{picture}
\caption{Criterion (iii) of Theorem \protect\ref{complexity-stability-relation} in $(x_\star,a_2)$ parameter plane for the case $a_1=8.5$, $b_1=50$, and $b_2=55$ in comparison to the curves defined by (\protect\ref{upper_bound_a2}) and the first inequality of (\protect\ref{a2_estimate}). Criterion (iii) of Theorem \protect\ref{complexity-stability-relation} is invalid in the yellow area. Criterion (i) is not valid on the left-hand side of the vertical line showing that the largest deviances from (\protect\ref{upper_bound_a2}) occur in areas where (i) is not valid anyway.}
\label{a2_limits}
\end{figure}
We proceed with the proof of Theorem \ref{complexity-stability-relation}.
\begin{proof}
We use Lemma \ref{RouthHurwitzMatrixForm} to derive stability conditions for the interior equilibrium. Indeed, we have
\begin{displaymath}
{\rm{Tr}}J(x_\star,y_\star,z_\star)=f_1(x_\star)F_1^\prime(x_\star)-z_\star\left(f_2^\prime(x_\star)-f_1^\prime(x_\star)\frac{f_2(x_\star)}{f_1(x_\star)}\right)-z_\star<0.
\end{displaymath}
Next
\begin{eqnarray*}
{\rm{det}}J(x_\star,y_\star,z_\star)&=&-y_\star\psi^\prime(x_\star)\left|\begin{array}{cc}
-f_1(x_\star) & -f_2(x_\star)\\
-z_\star & -z_\star\end{array}\right|\\
&=&-y_\star z_\star\psi^\prime(x_\star)\left(f_1(x_\star)-f_2(x_\star)\right)<0.
\end{eqnarray*}
The last inequality holds because our local condition (\ref{local_condition_B}). It follows that the coexistence equilibrium cannot have zero eigenvalues and can thus, lose its stability through Hopf-bifurcations only. In order to formulate condition (iii) of Lemma \ref{RouthHurwitzMatrixForm}, we first compute the principal subdeterminants
\begin{eqnarray*}
{\rm{det}}J_{11}(x_\star,y_\star,z_\star)&=&0,\\
{\rm{det}}J_{22}(x_\star,y_\star,z_\star)&=&z_\star\biggl(f_2(x_\star)G^\prime(x_\star)-f_1(x_\star)F_1^\prime(x_\star)\\&&+z_\star \left(f_2^\prime(x_\star)-f_1^\prime(x_\star)\frac{f_2(x_\star)}{f_1(x_\star)}\right)\biggr),\\
{\rm{det}}J_{33}(x_\star,y_\star,z_\star)&=&y_\star f_1(x_\star)\psi^\prime(x_\star),
\end{eqnarray*}
and note that (iii) of Lemma \ref{RouthHurwitzMatrixForm} is equivalent to
\begin{eqnarray*}
  {\rm{Tr}}J(x_\star,y_\star,z_\star)\det J_{22}(x_\star,y_\star,z_\star)+&&\\+{\rm{Tr}}J(x_\star,y_\star,z_\star)\det J_{33}(x_\star,y_\star,z_\star)-\det J(x_\star,y_\star,z_\star)=&& \\
  {\rm{Tr}}J(x_\star,y_\star,z_\star)\det J_{22}(x_\star,y_\star,z_\star)+&&\\
  +f_1(x_\star)F_1^\prime(x_\star)y_\star f_1(x_\star)\psi^\prime(x_\star)-z_\star y_\star f_1(x_\star)\psi^\prime(x_\star)\left(f_2^\prime(x_\star)-f_1^\prime(x_\star)\frac{f_2(x_\star)}{f_1(x_\star)}\right)&&\\-z_\star y_\star f_1(x_\star)\psi^\prime(x_\star)+y_\star z_\star\psi^\prime(x_\star)\left(f_1(x_\star)-f_2(x_\star)\right)=&&\\
  \underbrace{{\rm{Tr}}J(x_\star,y_\star,z_\star)}_{<0}\det J_{22}(x_\star,y_\star,z_\star)\underbrace{-y_\star z_\star f_2(x_\star)\psi^\prime(x_\star)}_{<0}+&&\\
  \underbrace{y_\star F_1^\prime(x_\star)(f_1(x_\star))^2\psi^\prime(x_\star)}_{<0}\underbrace{-y_\star z_\star  \psi^\prime(x_\star)(f_1(x_\star)f_2^\prime(x_\star)-f_1^\prime(x_\star)f_2(x_\star))}_{<0}.&&\\
\end{eqnarray*}
The whole quantity is therefore, negative when $\det J_{22}(x_\star,y_\star,z_\star)>0$. Indeed, we have since $F_1^\prime(x_\star)<0$
\begin{eqnarray*}
\frac{f_1(x_\star)}{z_\star}\det J_{22}(x_\star,y_\star,z_\star)&>&f_1(x_\star)f_2(x_\star)G^\prime(x_\star)-(f_1(x_\star))^2F_1^\prime(x_\star)\\&&+z_\star \left(f_1(x_\star)f_2^\prime(x_\star)-f_1^\prime(x_\star)f_2(x_\star)\right)>0.
\end{eqnarray*}

\end{proof}

\section{Numerical Results}
\label{numres}

According to Theorem \ref{complexity-stability-relation} existence of cycles in the predator-prey plane is an important criterion for an unstable coexistence equilibrium whenever such an equilibrium exist. In this section we fix the parameters as follows $c=.2$, $b_1=50$, $b_2=55$, $k=.95$, and $a_1=8.5$. The parameter values selected here allows an interval of existence for the coexistence equilibrium for all parameter values of $x_\star$, $0<x_\star<1$ according to Lemma \ref{exist-param_sat}. The upper bound for the efficiency of the mixotroph specified by (\ref{upper_bound_a2}) ($a_2\approx a_1=8.5$) does not enter to the window of parameter values selected for this diagram, cf Figure \ref{a2_limits}.

The major bifurcations of (\ref{chemostat_mixo_sat_scaled_altII}) are now indicated in Figure \ref{bifurcation_diagram}. Here we vary (i) the equilibrium value of $x_\star$ of the predator-prey equilibrium representing the interaction between the autotroph and the herbivore, and (ii) $a_2$ that determines the competitive ability between the mixotroph and the  autotroph for the limiting resources. Lemmas \protect\ref{glob_stab}-\protect\ref{unique_limit_cycle} gives the vertical line at $x_\star\approx.2598$. On the left-hand side of this line, the herbivore-autotroph system has a unique limit cycle (this corresponds to the areas {\bf{(a)}}, {\bf{(c)}}, {\bf{(d)}}, {\bf{(e)}}, {\bf{(g)}}, {\bf{(h)}}, {\bf{(i)}}, {\bf{(k)}}, {\bf{(l)}}, {\bf{(m)}}, {\bf{(p)}}, {\bf{(q)}}, {\bf{(r)}},  {\bf{(t)}}, {\bf{(u)}}, {\bf{(v)}}, {\bf{(w)}}, and {\bf{(x)}}) on the right hand side of this line $(x_\star,F_1(x_\star),0)$ is globally stable (this corresponds to the regions {\bf{(b)}}, {\bf{(f)}}, {\bf{(j)}}, {\bf{(n)}}, {\bf{(o)}}, {\bf{(s)}}, and {\bf{(y)}}).

In a similar manner, the horizontal lines correspond to dynamics in the competition plane. For low $a_2$ (in our case $a_2<k(1-c)=.76$), no competition equilibria exist and the carrying capacity equilibrium $(1,0,0)$ is stable in the competition plane (regions {\bf{(t)}}, {\bf{(u)}}, {\bf{(v)}}, {\bf{(w)}}, {\bf{(x)}}, and {\bf{(y)}}). For moderate values of $a_2$ (in our case $.76<a_2<3.582$) a unique competition equilibrium $(x_+,0,F_2(x_+))$ exists and is stable in the competition plane (regions {\bf{(k)}}, {\bf{(l)}}, {\bf{(m)}}, {\bf{(n)}}, {\bf{(o)}}, {\bf{(p)}}, {\bf{(q)}}, {\bf{(r)}}, and {\bf{(s)}}). For still larger values of $a_2$ (in our case $3.582<a_2<4$) a unique competition equilibrium exists, but no equilibria are stable in the competition plane (regions {\bf{(g)}}, {\bf{(h)}}, {\bf{(i)}}, {\bf{(j)}}). For further increase in $a_2$ ($4<a_2^\ast\approx 5.1118$) two competition equilibria exist and $(0,0,c)$ is stable in the competition plane (regions {\bf{(c)}}, {\bf{(d)}}, {\bf{(e)}}, and {\bf{(f)}}). For extreme values of $a_2$ ($a_2>a_2^\ast$), no competition equilibria exist and the mixotroph carrying capacity $(0,0,c)$ is stable in the competition plane (regions {\bf{(a)}} and {\bf{(b)}}).

The blue and red curves corresponds to the existence boundaries of the coexistence equilibrium. This means that the coexistence equilibrium exist in regions {\bf{(d)}}, {\bf{(g)}}, {\bf{(h)}}, {\bf{(k)}}, {\bf{(m)}}, {\bf{(n)}}, {\bf{(p)}}, {\bf{(t)}}, and {\bf{(u)}}. On the far right of the bifurcation diagram we confirm the conclusion of Lemma \ref{limit_lemma}. Below the blue curve, the coexistence equilibrium does not exist and at the blue curve it collides in a transcritical bifurcation with the predator-prey equilibrium. We see that the intersection of the blue curve with the horizontal line $a_2=k(1-c)=.76$ in the far left of the diagram  is in agreement with Lemma \ref{lemma5}. Above the red curve it does not exist, either. The red curve has a maximum at $a_2=a_2^\ast$. At the left hand side of this maximum, the coexistence equilibrium collides in a transcritical bifurcation with $(x_-,0,F_2(x_-))$ and at the right-hand side of this maximum a similar collision with $(x_+,0,F_2(x_+))$ occurs. The yellow area in regions {\bf{(g)}}, {\bf{(k)}}, {\bf{(u)}}, the cyan areas in regions {\bf{(p)}} and {\bf{(t)}}, the green areas in regions {\bf{(d)}}, {\bf{(h)}}, and possibly black areas on the boundaries of regions {\bf{(k)}}, {\bf{(g)}}, {\bf{(p)}}, and {\bf{(t)}}  corresponds to the region where the coexistence equilibrium is unstable. In the yellow area in regions {\bf{(k)}}, {\bf{(g)}}, and {\bf{(u)}} criterion (i) in Lemma \ref{RouthHurwitzMatrixForm} is sufficient for deducing instability whereas criterion (iii) is needed in the black areas. We see that this occurs on the boundaries only precisely as predicted by Corollary \ref{stability-loss}.

It is of importance to elucidate the dynamical behavior of our system in regions that possess no stable equilibria. In region {\bf{(s)}} and {\bf{(y)}}, the predator-prey equilibrium $(x_\star,F_1(x_\star),0)$ is stable. In regions {\bf{(m)}} and {\bf{(n)}}, the coexistence equilibrium $(x_\star,y_\star,z_\star)$ is stable. In regions {\bf{(l)}} and {\bf{(o)}}, the competition equilibrium $(x_+,0,F_2(x_+))$ is stable. In regions {\bf{(a)}}, {\bf{(b)}}, {\bf{(c)}}, {\bf{(d)}}, {\bf{(e)}}, and {\bf{(f)}}, the mixotroph carrying capacity $(0,0,c)$ is stable. For $x_\star>.2598$ region {\bf{(j)}} is of interest. There, the predator-prey plane has a fixed point that is globally stable in the plane but unstable with respect to mixotrophic invasion. However, at least one stable limit cycle exist in the competition plane and it is not clear whether or not it (or these limit cycles) might be unstable with respect to hervibore invasion. If cycles are small, they spend a lot of time near the competition equilibrium that has a negative third eigenvalue and when they grow larger they spend a substantial amount of time in the vicinity of the saddle $(0,0,c)$ that is still more stable with respect to herbivore invasion. We conclude that the invasion of mixotrophs has a destabilizing impact on the dynamics in region {\bf{(j)}} since the autotroph-herbivore dynamics results in global stability and invasion of mixotrophs leads to out-competition of the herbivores and resulting cyclic dynamics.

We selected the parameter values $x_\star=.26$ and $a_2=3.9$ for illustration of the destabilizing behavior in region {\bf{(j)}}. The parameter value is marked with a blue circle in region {\bf{(j)}} in our bifurcation diagram, Figure \ref{bifurcation_diagram}. The result is visible in Figure \ref{Simubild22}(a). The simplex containing all solutions and the $x=x_\star$-plane are marked with dotted green lines. The isocline in the $xy$-plane, $F_1$ is marked with a dotted blue curve and the equilibrium $(x_\star,F_1(x_\star),0)$ is marked with a blue $\ast$-mark. It is globally stable in the $xy$-plane but unstable in the $z$-direction. The remaining equilibria $(0,0,0)$ and $(1,0,0)$ in the predator-prey plane are marked with red $\ast$-marks. In the $xz$-plane, the isoclines $F_2$ and $G$ are marked with dotted blue curves and lines, respectively, and their intersection at $(x_+,0,F_2(x_+))$ is marked with a red $\ast$-mark. The equilibrium at $(0,0,c)$ is marked with a red $\circ$-mark. The limit cycle in the $xz$-plane is marked with a red curve. It approaches the point $(0,0,c)$ from its stable manifold along the $z$-axis and leaves it along its unstable manifold in the $xz$-plane and spends a considerable amount of time in the vicinity of this saddle. The projection of the limit cycle onto the $xy$-plane is marked with a cyan curve and the projection onto the $yz$-plane is indicated in green. There is no need to visualize the projection onto the $xz$-plane, but if this projection would not agree with the limit cycle itself, it would appear in magenta. The idea of selecting the value $x_\star=.26$ very close to the bifurcation value $x_\star=.2598$ was to increase the possibility for observing more complex oscillations involving all three species in this region. We did not find any oscillatory behavior of this type at this stage of our study.

We now turn over to the region $x_\star<.2598$. In this case, the herbivore-autotroph coexistence is always described by a unique limit cycle. Therefore, we do not expect qualitatively destabilizing behavior after a mixotrophic invasion in the sense that we previously found in region {\bf{(j)}}. The area corresponding to an unstable coexistence equilibrium is marked with yellow or black dots in regions {\bf{(g)}}, {\bf{(k)}}, and {\bf{(u)}}, green dots in regions {\bf{(d)}} and {\bf{(h)}}, and cyan dots in regions {\bf{(p)}} and {\bf{(t)}}. If $a_2>4$, $(0,0,c)$ is always stable (areas {\bf{(a)}}, {\bf{(c)}}, {\bf{(d)}}, and {\bf{(e)}}). No coexistence equilibrium nor competition equilibria exist in region {\bf{(a)}}. In regions {\bf{(c)}} and {\bf{(e)}}, two competition equilibria exist but no coexistence equilibrium and in region {\bf{(d)}}, two competition equilibria coexist together with an unstable coexistence equilibrium. We selected the parameter values $x_\ast=.05$ and $a_2=4.5$ to illustrate the situation in region {\bf{(d)}} (marked by a black * in Figure \ref{bifurcation_diagram}). The result is depicted in Figure \ref{Simubild22}(b). All positive solutions seem to converge towards the mixotroph carrying capacity $(0,0,c)$ that is denoted by a red $\circ$-mark. The washout equilibrium $(0,0,0)$, the carrying capacity equilibrium $(1,0,0)$ and the two competition equilibria are denoted by red $\ast$-marks. The autotroph-herbivore equilibrium $(x_\star,F_1(x_\star),0)$ is denoted by a blue $\ast$-mark. The coexistence equilibrium $(x_\star,y_\star,z_\star)$ is denoted by a black $\ast$-mark together with its projections in the autotroph-herbivore plane (cyan), competition-plane (magenta), and in the $yz$-plane (green). Finally, the autotroph-herbivore cycle in the predator-prey plane is plotted in blue and is unstable with respect to mixotropic invasion (cf. Table \ref{bifurcation_table}).

In region {\bf{(h)}}, we have again no stable equilibria so the resulting dynamics must be oscillatory. We also notice that neither $(x_+,0,F_2(x_+))$ nor $(x_\star,F_1(x_\star),0)$ are stable in the competition plane and the autotroph-herbivore plane, respectively. We selected the parameter value $x_\star=.08$ and $a_2=3.8$. The parameter value is indicated with a $\circ$-mark in region {\bf{(h)}} in our bifurcation diagram Figure \ref{bifurcation_diagram}. The resulting dynamics is depicted in Figure \ref{Simubild22}(c). We have an autotroph-herbivore cycle that is indicated in blue and is unstable with respect to mixotroph invasion. After invasion, the mixotroph outcompetes the herbivore and the resulting dynamics are the mixotroph-autotroph cycles indicated in red.

At the boundary between regions {\bf{(g)}} and {\bf{(h)}} there is a transcritical of cycles bifurcation so that the herbivore is not out-competed anymore after mixotrophic invasion in region {\bf{(g)}}. We illustrate this situation with the parameter value $x_\star=.09$ and $a_2=3.7$. The selected parameter value is indicated with a blue $\ast$-mark in Figure \ref{bifurcation_diagram}. The resulting dynamics is depicted in Figure \ref{Simubild22}(d). All the equilibria are still indicated and the unstable coexistence equilibrium is still indicated along with its projections on the various coordinate planes. The unstable competition cycle is indicated in red together with the autotroph-herbivore cycle in blue. The newly emerged coexistence cycle is indicated in black along with its projections in magenta, cyan, and green on the various coordinate planes.

Similarly, at the boundary between the regions {\bf{(g)}} and {\bf{(k)}}, a Hopf bifurcation occurs that makes the competition equilibrium $(x_+,0,F_2(x_+))$ stable in the competition plane in region {\bf{(k)}}. We selected three parameter values from this region and the first one is $x_\star=.1$ and $a_2=3.5$ (marked by a top blue * in region {\bf{(k)}} of Figure \ref{bifurcation_diagram}). The situation is depicted in Figure \ref{Simubild11}(a): The coexistence cycles that emerged from the transcritical bifucation between regions {\bf{(g)}} and {\bf{(h)}} still exists, but the cycles in the competition plane have disappeared. The projections of the coexistence limit sets on the various coordinate planes are still depicted.

The next value of interest is $x_\star=.18$, $a_2=2.8$. The parameter value is indicated with a blue $\ast$-mark in region {\bf{(k)}} in Figure \ref{bifurcation_diagram}). We have depicted the dynamical situation in Figure \ref{Simubild11}(b). We observe that the coexistence equilibrium has just lost its stability in the Hopf bifurcation predicted by Theorem \ref{complexity-stability-relation}, that all equilibria are unstable and that the system possesses coexistence cycles.

We are of course interested in whether more complex oscillations could exist and illustrate our findings with the parameter value $x_\star=.15$ and $a_2=3.0$ in Figure \ref{Simubild11}(c) (marked by a red *-mark in region {\bf{(k)}} of Figure \ref{bifurcation_diagram}). We note that the oscillations observed are a mixture of the competition cycles that emerged from the transcritical bifurcation in the competition plane and the Hopf-bifurcation occurring for the coexistence equilibrium after that the Hopf-bifurcation has occurred for the autotroph-herbivore equilibrium according to Theorem \ref{complexity-stability-relation}. The stable and unstable manifold of two equilibria are essential for these oscillations and we start the description of the oscillation from the competition equilibrium $(x_+,0,F_2(x_+))$. It is stable in the competition plane but unstable with respect to herbivore invasion. The oscillation approaches this equilibrium close to the stable manifold (the competition plane) of this point and it leaves it along the its unstable manifold. Next the mixotroph carrying capacity has the $yz$-plane as its stable manifold and an unstable manifold that intersects the two-dimensional stable manifold of the competition equilibrium.

The oscillations might come into the vicinity of the coexistence equilibrium before they are captured by the stable manifold of the mixotroph equilibrium and such an interaction is possible to observe in Figure \ref{Simubild11}(c). This interaction is, however, not essential. We now remove the coexistence equilibrium by selecting the parameter values $x_\star=.15$ and $a_2=2.3$ that is indicated by a red $\ast$-mark in region {\bf{(q)}} in Figure \ref{bifurcation_diagram}. We see that a very similar type of oscillations persist despite that the coexistence equilibrium is removed (Figure \ref{Simubild11}(d)). The two-dimensional stable manifolds of $(x_+,0,F_2(x_+))$ and $(0,0,c)$ interact with their one-dimensional unstable manifolds. We have so far no reliable indications of chaotic behavior for this system. All computations of Lyapunov exponents have so far resulted in either negative or values close to zero (from cyclic/quasiperiodic oscillations). Moreover, the possible route to chaos remains so far unclear in our simulations. The oscillations that occur without the presence of an interior fixed point in region {\bf{(q)}} and {\bf{(w)}} collapse through a transcritical of cycles bifurcation into autotroph-herbivore cycles on the boundary between the regions {\bf{(q)}} and {\bf{(w)}} and the cyan regions {\bf{(r)}} and {\bf{(x)}} that correspond to autotroph-herbivore cycles.

A similar transcritical of cycles bifurcations might occur at the boundaries between the regions {\bf{(v)}} and {\bf{(w)}}, {\bf{(t)}} and {\bf{(u)}}, and {\bf{(k)}} and {\bf{(p)}}, respectively. However, at these boundaries we have just vague numerical evidence for such transitions without theoretical support. The observed transition might be caused by numerical instabilities or can be of more complex nature than just a transcritical of cycles bifurcation. On the other hand, more theoretical evidence for transcritical bifurcations along the boundaries between {\bf{(q)}} and {\bf{(r)}} and {\bf{(w)}} and {\bf{(x)}} exists, the transcritical boundary meets e. g. the transcritical bifurcation of the coexistence equilibrium between the areas {\bf{(n)}} and {\bf{(s)}}. We select still a number of parameter values for separate study in Figure \ref{Simubild3}. We start with the parameter value $x_\star=.02$, $a_2=.6$ in region {\bf{(u)}} in Figure \ref{Simubild3}(a). Here an unstable coexistence equilibrium coexists with a coexistence cycle. Figure \ref{Simubild3}(b) demonstrated the parameter value $x_\star=.05$, $a_2=.5$ in Region {\bf{(w)}}. The coexistence cycle exists, but no coexistence equilibrium. Figure \ref{Simubild3}(c) demonstrates with the parameter value $x_\star=.2$, $a_2=.5$ in region {\bf{(x)}} how the coexistence cycle has collapsed into the herbivore-autotroph cycles in the predator-prey plane. No competition equilibrium exists in this region. Figure \ref{Simubild3}(d) demonstrates a similar transcritical collapse in region {\bf{(r)}}, where a competition equilibrium exists.

Ending up at the expectations of the title of this paper, first we have a typical qualitatively destabilizing region at region {\bf{(j)}} in Figure \ref{bifurcation_diagram} and here fixed point dynamics is replaced by limit cycle dynamics as mixotrophs invade. Second, we have a typical qualitatively stabilizing region at region {\bf{(m)}} in Figure \ref{bifurcation_diagram}. Here limit cycle dynamics is replaced by fixed point dynamics as mixotrophs invade. Third, we have typical region where mixotrophic invasion causes multiple attractors and initial value dependent behavior, see Figure \ref{multiple_attractors}(d).

\begin{figure}
\epsfxsize=173mm
\begin{picture}(350,350)(0,0)
\put(-56,-30){\epsfbox{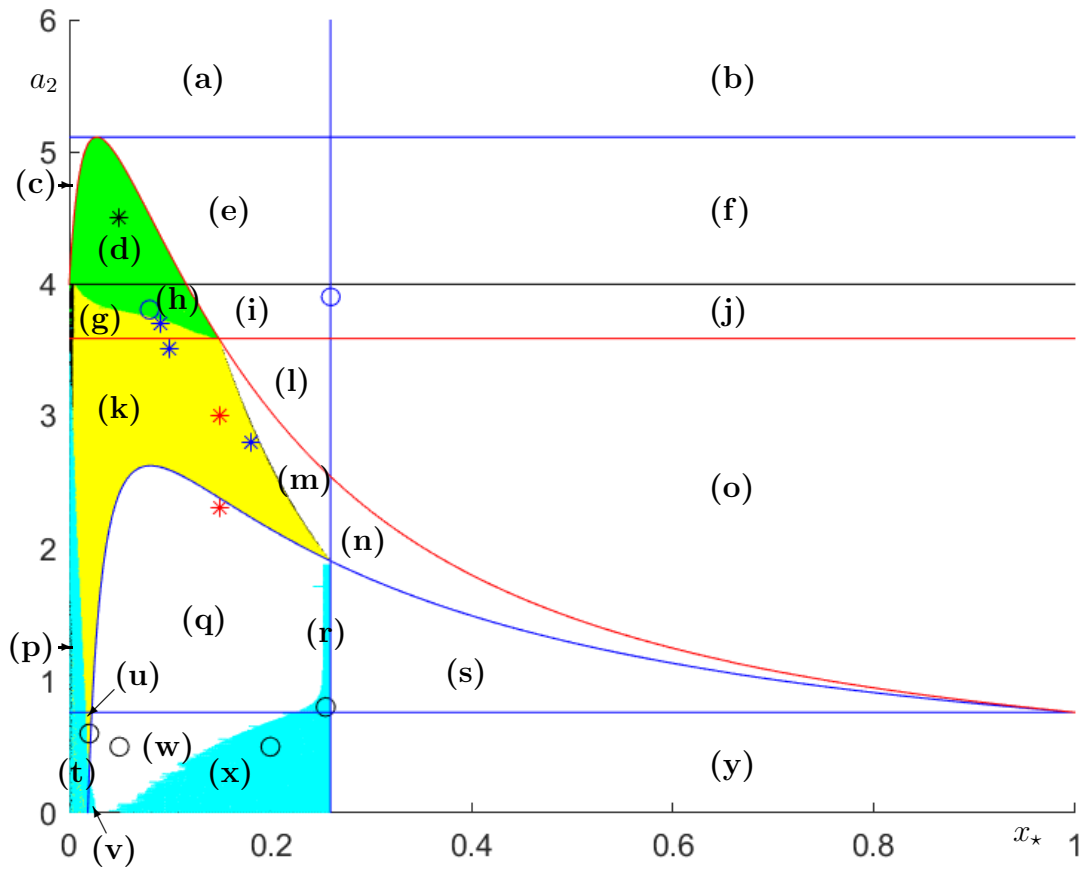}}
\put(365,0){$x_\star$}
\put(-7,285){$a_2$}
\put(-13,245){{\rm{{\bf{(c)}}}}}
\put(4,248){\vector(1,0){5}}
\put(50,285){\rm{{\bf{(a)}}}}
\put(250,285){\rm{{\bf{(b)}}}}
\put(60,235){\rm{{\bf{(e)}}}}
\put(250,235){\rm{{\bf{(f)}}}}
\put(70,197){\rm{{\bf{(i)}}}}
\put(250,197){\rm{{\bf{(j)}}}}
\put(85,170){\rm{{\bf{(l)}}}}
\put(250,130){\rm{{\bf{(o)}}}}
\put(86,133){\rm{{\bf{(m)}}}}
\put(110,110){\rm{{\bf{(n)}}}}
\put(50,80){\rm{{\bf{(q)}}}}
\put(97,75){\rm{{\bf{(r)}}}}
\put(150,60){\rm{{\bf{(s)}}}}
\put(60,22){\rm{{\bf{(x)}}}}
\put(250,25){\rm{{\bf{(y)}}}}
\put(18,220){\rm{{\bf{(d)}}}}
\put(11,194){\rm{{\bf{(g)}}}}
\put(40,201){\rm{{\bf{(h)}}}}
\put(18,160){\rm{{\bf{(k)}}}}
\put(35,32){\rm{{\bf{(w)}}}}
\put(25,59){\rm{{\bf{(u)}}}}
\put(25,57){\vector(-1,-1){10}}
\put(-15.5,70){{\rm{{\bf{(p)}}}}}
\put(4,73){\vector(1,0){5}}
\put(3,22){{\rm{{\bf{(t)}}}}}
\put(16,-7){{\rm{{\bf{(v)}}}}}
\put(22,3){\vector(-1,2){5}}
\end{picture}
\caption{The $(x_\star,a_2)$ parameter plane in the case $c=.2$, $k=.95$, $a_1=8.5$, $b_1=50$, $b_2=55$. Parameter values selected for special study are marked with either $\circ$- or $\ast$-marks. Corresponding dynamical behavior is summarized in Table \protect\ref{bifurcation_table}.}
\label{bifurcation_diagram}
\end{figure}

\begin{table}
\tiny{
\begin{tabular}{|l|l|l|l|}\hline
Region    & Predator-prey plane                 & Competition plane & Global dynamics \\ \hline
{\bf{(a)}} & Unique limit cycle unstable   & $(0,0,c)$ stable & $(0,0,c)$ stable \\
    & w.r.t. mixotrophic invasion    &                &  No coexistence equilibria              \\ \hline
{\bf{(b)}} & Equilibrium unstable w.r.t.  & $(0,0,c)$ stable & $(0,0,c)$ stable \\
    & mixotrophic invasion &                & No coexistence equilibria              \\ \hline
{\bf{(c)}} & Unique limit cycle unstable   & alternative competition eq. & $(0,0,c)$ stable \\
    & w.r.t. mixotrophic invasion    & exist, all of them unstable & No coexistence equilibria            \\
    &                                & w.r.t herbivore invasion  &                                        \\\hline
{\bf{(d)}} & Unique limit cycle unstable   & alternative competition & $(0,0,c)$ stable \\
    & w.r.t. mixotrophic invasion    & equilibria exist & unstable coexistence equilibrium exists                \\
    &                                & $(x_+,0,F_2(x_+))$ unstable w.r.t                 &                                                     \\
    &                                & herbivore invasion                                 &                                  \\ \hline
{\bf{(e)}} & Unique limit cycle unstable   & alternative competition  & $(0,0,c)$ stable \\
    & w.r.t. mixotrophic invasion    & equilibria exist, all stable & No coexistence equilibria               \\
    &                                & w.r.t. herbivore invasion    &                    \\ \hline
{\bf{(f)}} & Equilibrium unstable w.r.t.  & alternative competition & $(0,0,c)$ stable \\
    & mixotrophic invasion    & equilibria exist all stable &  No coexistence equilibria             \\
    &                         & w.r.t. herbivore invasion &                            \\ \hline
{\bf{(g)}} & Unique limit cycle unstable   & Limit cycle unstable w.r.t. & Oscillatory coexistence dynamics\\
    & w.r.t. mixotrophic invasion    & herbivore invasion & unstable coexistence equilibrium exists              \\ \hline
{\bf{(h)}} & Unique limit cycle unstable   & stable competition  & stable competition limit cycle \\
    & w.r.t. mixotrophic invasion    & limit cycle & unstable coexistence equilibrium exists               \\ \hline
{\bf{(i)}} & Unique limit cycle unstable  & Stable competition  & Stable competition limit cycle \\
    & w.r.t. mixotrophic invasion    & limit cycle &  No coexistence equilibria  \\ \hline
{\bf{(j)}} & Equilibrium unstable w.r.t.  & Stable competition  & Stable competition limit cycle \\
    & mixotrophic invasion    & limit cycle &  No coexistence equilibria  \\ \hline
{\bf{(k)}} & Unique limit cycle unstable  & Equilibrium unstable w.r.t. & Oscillatory coexistence dynamics\\
    & w.r.t. mixotrophic invasion    & herbivore invasion & unstable coexistence equilibrium exists               \\ \hline
{\bf{(l)}} & Unique limit cycle unstable   & Stable competition  & Stable competition equilibrium \\
    & w.r.t. mixotrophic invasion    & equilibrium & No coexistence equilibria  \\ \hline
{\bf{(m)}} & Unique limit cycle unstable   & Equilibrium unstable w.r.t. & Stable coexistence equilibrium \\
    & w.r.t. mixotrophic invasion    & herbivore invasion &    \\ \hline
{\bf{(n)}} & Equilibrium unstable w.r.t.  & Equilibrium unstable w.r.t. & Stable coexistence equilibrium \\
    & mixotrophic invasion    & herbivore invasion &  \\ \hline
{\bf{(o)}} & Equilibrium unstable w.r.t.  & Stable competition  & Stable competition equilibrium \\
    & mixotrophic invasion    & equilibrium & No coexistence equilibria   \\ \hline
{\bf{(p)}} & Stable unique  & Equilibrium unstable w.r.t. & Predator-prey cycle stable?\\
    & limit cycle?    & herbivore invasion & unstable coexistence equilibrium exists               \\ \hline
{\bf{(q)}} & Unique limit cycle unstable   & Equilibrium unstable w.r.t. & No coexistence equilibria \\
    & w.r.t. mixotrophic invasion    & herbivore invasion & Oscillatory coexistence dynamics   \\ \hline
{\bf{(r)}} & Stable unique         & Unstable competition        & Unique Predator-prey limit cycle stable \\
    & limit cycle    & equilibrium, $(0,0,c)$ unstable &  No coexistence equilibria \\ \hline
{\bf{(s)}} & Stable  & Equilibrium unstable w.r.t. & Predator-prey equilibrium stable \\
    & Equilibrium    & herbivore invasion & No coexistence equilibria    \\ \hline
{\bf{(t)}} & Stable unique       & No competition equilibria         & Predator-prey cycle stable? \\
    & limit cycle?    & $(0,0,c)$ unstable & unstable coexistence equilibrium exists     \\ \hline
{\bf{(u)}} & Unique limit cycle unstable       & No competition equilibria         & Oscillatory coexistence dynamics \\
    & w.r.t. mixotrophic invasion    & $(0,0,c)$ unstable & unstable coexistence equilibrium exists     \\ \hline
{\bf{(v)}} & Stable unique       & No competition equilibria         & Predator-prey cycle stable? \\
    & limit cycle?    & $(0,0,c)$ unstable & No coexistence equilibrium     \\ \hline
{\bf{(w)}} & Unique limit cycle unstable         & No competition equilibria         & Oscillatory coexistence dynamics\\
    & w.r.t  mixotrophic invasion    & $(0,0,c)$ unstable & No coexistence equilibria    \\ \hline
{\bf{(x)}} & Stable unique        & No competition equilibria        & Predator-prey limit cycle stable\\
    & limit cycle    & $(0,0,c)$ unstable & No coexistence equilibria    \\ \hline
{\bf{(y)}} & Stable         & No competition equilibria         & Predator-prey equilibrium stable \\
    & Equilibrum    & $(0,0,c)$ unstable & No coexistence equilibria   \\ \hline
\end{tabular}}
\caption{Major topological properties of (\protect\ref{chemostat_mixo_sat_scaled_altII}) related to the different regions in Figure \ref{bifurcation_diagram}. The description of what actually happens at the areas {\bf{(p)}}, {\bf{(t)}}, and {\bf{(v)}} is incomplete and we have so far no theoretical evidence confirming the numerical observations reported here that might to be subject to instabilities and round-off errors in our numerical implementation of (\protect\ref{chemostat_mixo_sat_scaled_altII}).}
\label{bifurcation_table}
\end{table}

\begin{figure}
\epsfxsize=138mm
\begin{picture}(350,350)(0,0)
\put(0,0){\epsfbox{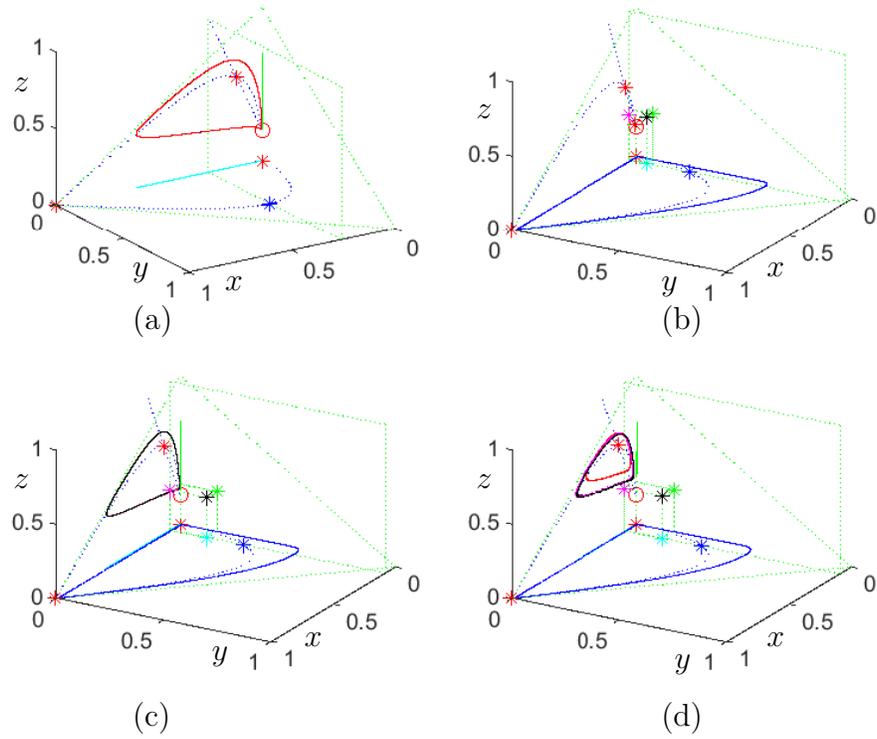}}
\put(80,150){(a)}
\put(280,150){(b)}
\put(80,0){(c)}
\put(280,0){(d)}
\put(35,240){$z$}
\put(35,90){$z$}
\put(210,230){$z$}
\put(210,90){$z$}
\put(80,170){$y$}
\put(280,165){$y$}
\put(110,25){$y$}
\put(285,22){$y$}
\put(115,165){$x$}
\put(320,170){$x$}
\put(145,30){$x$}
\put(320,30){$x$}
\end{picture}
\caption{Simulations in the case $c=.2$, $k=.95$, $a_1=8.5$, $b_1=50$, $b_2=55$. (a) Region {\bf{(j)}}: $x_\star=.26$, $a_2=3.9$. (b) Region {\bf{(d)}}: $x_\star=.05$, $a_2=4.5$. (c) Region {\bf{(h)}}: $x_\star=.08$, $a_2=3.8$. (d) Region {\bf{(g)}}: $x_\star=.09$, $a_2=3.7$.}
\label{Simubild22}
\end{figure}

\begin{figure}
\epsfxsize=138mm
\begin{picture}(350,350)(0,0)
\put(0,0){\epsfbox{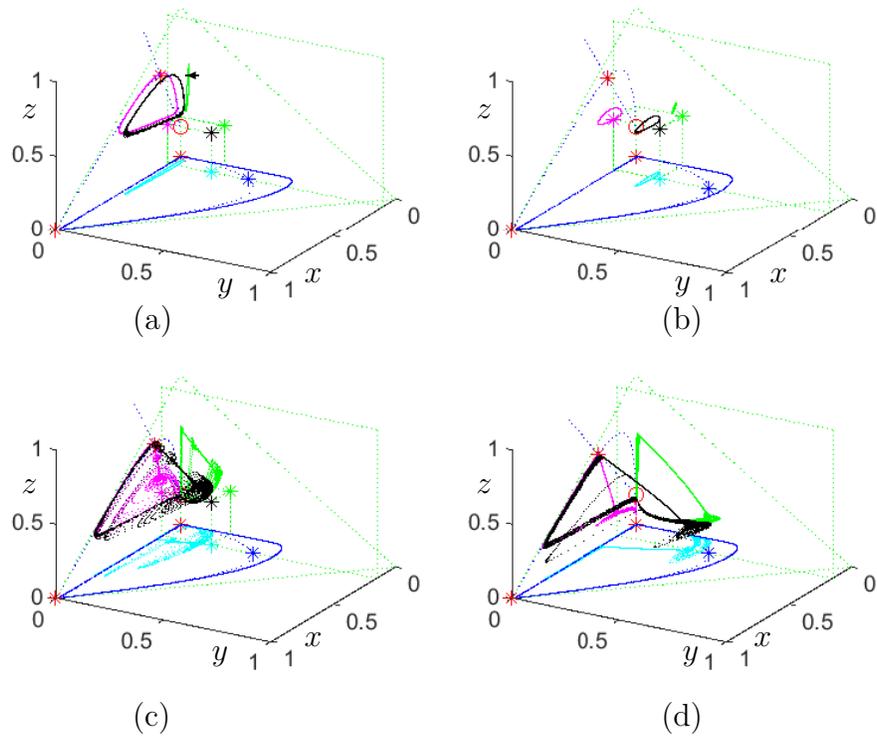}}
\put(80,150){(a)}
\put(280,150){(b)}
\put(80,0){(c)}
\put(280,0){(d)}
\put(38,230){$z$}
\put(38,88){$z$}
\put(210,230){$z$}
\put(210,88){$z$}
\put(112,164){$y$}
\put(285,165){$y$}
\put(110,25){$y$}
\put(288,25){$y$}
\put(145,168){$x$}
\put(321,168){$x$}
\put(145,30){$x$}
\put(315,30){$x$}
\put(105,246){\vector(-1,0){5}}
\end{picture}
\caption{Simulations in the case $c=.2$, $k=.95$, $a_1=8.5$, $b_1=50$, $b_2=55$. (a) Region {\bf{(k)}}: $x_\star=.1$, $a_2=3.5$. (b) Region {\bf{(k)}}: $x_\star=.18$, $a_2=2.8$. (c) Region {\bf{(k)}}: $x_\star=.15$, $a_2=3.0$. (d) Region {\bf{(q)}}: $x_\star=.15$, $a_2=2.3$.}
\label{Simubild11}
\end{figure}

\begin{figure}
\epsfxsize=138mm
\begin{picture}(350,350)(0,0)
\put(0,0){\epsfbox{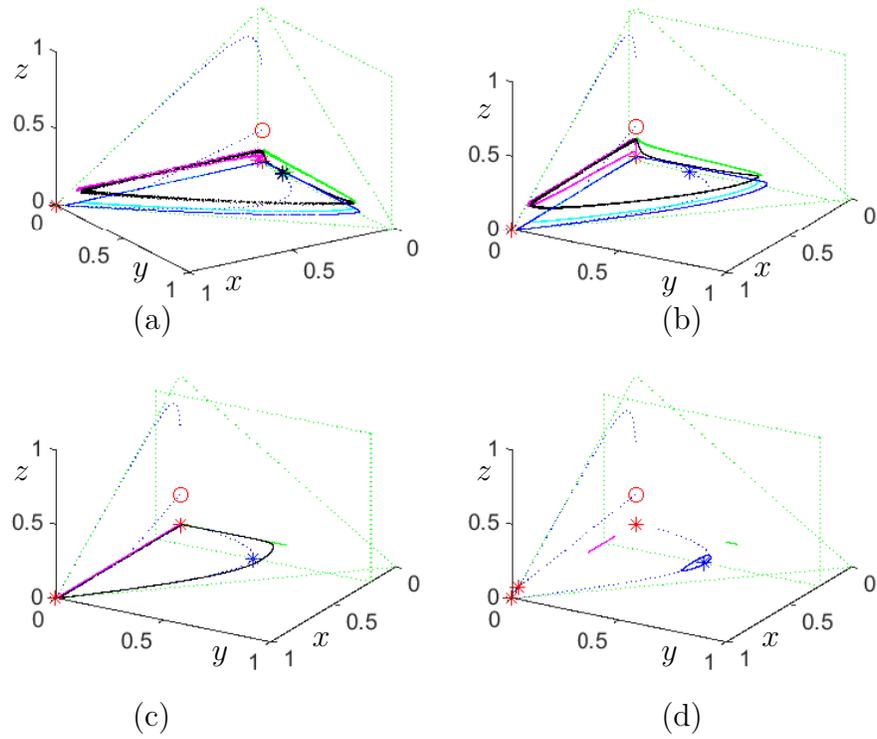}}
\put(80,150){(a)}
\put(280,150){(b)}
\put(80,0){(c)}
\put(280,0){(d)}
\put(35,245){$z$}
\put(35,93){$z$}
\put(210,230){$z$}
\put(210,93){$z$}
\put(80,170){$y$}
\put(280,165){$y$}
\put(110,25){$y$}
\put(280,25){$y$}
\put(115,165){$x$}
\put(315,170){$x$}
\put(148,30){$x$}
\put(318,30){$x$}
\end{picture}
\caption{Simulations in the case $c=.2$, $k=.95$, $a_1=8.5$, $b_1=50$, $b_2=55$. (a) Region {\bf{(u)}}: $x_\star=.02$, $a_2=.6$. (b) Region {\bf{(w)}}: $x_\star=.05$, $a_2=.5$. (c) Region {\bf{(x)}}: $x_\star=.2$, $a_2=.5$. (d) Region {\bf{(r)}}: $x_\star=.255$, $a_2=.8$.}
\label{Simubild3}
\end{figure}

\section{Discussion}
\label{dicussion}
In the present study, we have examined the role of invading mixotrophs on the stability of an autotroph-herbivore system. Specifically, we have chosen a spectrum of mixotrophic organisms by varying their competitive abilities. Mixotrophs are assumed as weak competitors compared to the pure autotrophs and the pure herbivores (Litchman et al. (2007)\nocite{Litchman.EcolLett:10}; Zubkov and Tarran (2008)\nocite{Zubkov.Nature:455}). We are able to formulate such a condition precisely for the mixotrophs in relation to the autotrophs, but in relation to the herbivores we end up with a too strong global condition and a too weak local condition. However, we did not encounter any obvious problem with our analysis as we used the local condition. We have analyzed the system with a limiting case of the chemostat instead of doing logistic approximations since the limiting case still preserves the complete information of the system exposed to mixotrophic invasion (Lindstr\"{o}m and Cheng (2015)\nocite{lindstr_cheng}) similar to the logistic approximations (Kuang and Freedman (1988)\nocite{kuangfyra}).

Two main subsystems of our system, a competition system (autotroph-mixotroph) and a predator-prey system (autotroph-herbivore), describe the dynamical properties in different invariant coordinate planes. The predator-prey system has a $\omega$-limit set that consists of either a unique globally stable equilibrium or a unique limit cycle. As the cycles grow larger, the system becomes increasingly sensitive to perturbation, cf. Rosenzweig (1971)\nocite{Rosenzweig.Science:171}. The global analysis is not that complete for the competition system. It has at most five equilibria and both situations where all existing equilibria are unstable and two of them remain locally stable exist. As a result, this subsystem possesses both initial value dependent behavior and limit cycles.

Our analysis also reveals the existence of a possible additional unstable equilibrium in the complete three-species model. This leads to a situation without any stable equilibrium and opens the possibility for complicated oscillations. However, we were unable to find any evidence for chaos at this stage but found that the oscillations observed typically spend a lot of time close to some of the saddle-type equilibria. Checking the existence of chaos needs a lot of effort in computing Lyapunov exponents in a reliable way and is thus out of the scope of this paper (Guckenheimer and Holmes (1983)\nocite{guck}, and Wolf et al. (1985)\nocite{wolf}).

We have analyzed the system by using two major bifucation parameters, one that controls the competitive ability of the mixotroph compared to the autotroph for limiting resources ($a_2$-the mixotrophic link parameter), and the other one that controls the autotroph-herbivore interaction ($x_\star$-the equilibrium of the autotroph when interacting with herbivores). For small $x_\star$, the coexistence between autotrophs and herbivores is described by unique limit cycles, whereas as $x_\star$ becomes large, a globally stable fixed point describes the situation. On the other hand, when $a_2$ remains small, autotroph appears as a better competitor for nutrient than the mixotroph and outcompetes the mixotroph. As $a_2$ increases, first the mixotroph and the autotroph coexist in a stable equilibrium, and then a Hopf-bifurcation occurs resulting in autotroph-mixotroph cycles. Further increase in $a_2$ results in the existence of multiple competition equilibria together with a stable equilibrium at the mixotrophic carrying capacity. For very large values of $a_2$, mixotroph outcompetes the autotroph and the mixotrophic carrying capacity equilibrium (0, 0, c) becomes stable.

The most interesting result of this study is that invading mixotrophs may both stabilize and destabilize existing autotroph-herbivore dynamics. We have provided substantial analytical and numerical evidence for our claim. Globally stable two-species interaction may turn into competition cycles after the grazer has been outcompeted and the cyclic dynamics might be converted into either fixed points or oscillatory dynamics that may involve all, just two, or only one species. The appearance of multiple attractors is not only something that can occur for the autotroph-mixotroph relation, but a behavior that persists even in the complete three-species system. Note that, in a similar type of system but with relatively simpler predation term, Jost et al. (2004)\nocite{Jost.TPB:66} found only stabilizing role of mixotrophy. Other modelling studies support the claim of stabilizing nature of mixotrophy (Hammer and Pitchford, 2005\nocite{Hammer.ICESMarine:62}; Mitra et al. 2014\nocite{Mitra.PoO:129}). There is also evidence that the invasion of mixotrophs destabilizes the system by importing oscillations (Crane and Grover 2010). However, both stabilizing and destabilizing roles of invading mixotrophs in a single model has been found for the first time in the present study.

The model presented here was used to show only the fundamental mechanisms and must not be confused with quantitative simulations. The main purpose was to investigate the role of mixotrophs in a simple predator-prey system in order to understand the conditions under which invasion of mixotrophs is possible and how it affects the stability of the system and leads to more complicated dynamics. The occurrence of multi-stability and both stabilization and destabilization effects associated with the invasive mixotrophs indicate that nutrient limited lower latitude areas and higher latitude summer conditions which are ideal for mixotrophs to dominate, the community composition might be very sensitive to any kind of perturbations, i.e. a small change in the environmental condition can result into a huge change in the community composition and finally ecosystem functions. However, to quantify the real effects we need to run our system in a real physical setup using more realistic parameter values.

\paragraph{Acknowledgements} Some of the questions treated in this paper emanated from a discussion between Prof. em. Edna Gran{\'{e}}li and Torsten Lindstr\"{o}m that was connected to a joint research project proposal 2004, but national Swedish funding was never admitted. Consequently, it took some time to arrange possibilities for completing this study. Torsten Lindstr\"{o}m thanks the Department of Mathematics at University of Helsinki for a stimulating discussion after a seminar lecture November 8, 2017, based on an earlier version of this paper. Subhendu Chakraborty was supported by the H. C. \O rsted COFUND postdoc fellowship.

\bibliographystyle{abbrv}
\bibliography{artiklar,biologi,dynamic}

\end{document}